\newtheorem{thm}{Theorem}[section]
\newtheorem{corollary}[thm]{Corollary}
\newtheorem{lemma}[thm]{Lemma}
\newtheorem{proposition}[thm]{Proposition}
\theoremstyle{definition}
\newtheorem{definition}[thm]{Definition}
\newtheorem{example}[thm]{Example}
\theoremstyle{remark}
\newtheorem{remark}[thm]{Remark}
\newcommand{\cL}{{\mathcal L}}
\newcommand{\cD}{{\mathcal D}}
\newcommand{\hook}{\lrcorner \,}
\newcommand{\re}{\mathrm{Re}}
\newcommand{\im}{\mathrm{Im}}
\newcommand{\sgn}{\mathrm{sgn}}
\newcommand{\G}{\mathrm{G}}
\newcommand{\bR}{{\mathbb R}}
\newcommand{\bC}{{\mathbb C}}
\newcommand{\so}{{{\mathfrak{so}}} }
\newcommand{\SU}{{\mathrm{SU}}}
\newcommand{\Spin}{{\mathrm{Spin}}}
\newcommand{\diag}{{\mathrm{diag}}}
\newcommand{\mfg}{\mathfrak{g}}
\newcommand{\mfh}{\mathfrak{h}}
\newcommand{\mfu}{\mathfrak{u}}
\newcommand{\mfr}{\mathfrak{r}}
\newcommand{\ad}{\operatorname{ad}}
\newcommand{\tr}{\operatorname{tr}}
\newcommand{\Ann}[1]{#1^0}
\newcommand{\spa}[1]{\mathrm{span}(#1)}
\newcommand{\id}{\operatorname{id}}
\newcommand{\SO}{\operatorname{SO}}
\newcommand{\GL}{\operatorname{GL}}
\newcommand{\SL}{\operatorname{SL}}
\newcommand{\artanh}{\operatorname{artanh}}
\newcommand{\arcoth}{\operatorname{arcoth}}
\numberwithin{equation}{section}
\begin{document}

\title{$\SU(4)$-holonomy via the left-invariant hypo and Hitchin flow}

\author{Marco Freibert}
\address{Mathematisches Seminar\\
Christian-Albrechts-Universit\"at zu Kiel\\
Ludewig-Meyn-Strasse 4\\
D-24098 Kiel\\
Germany}

\email{freibert@math.uni-kiel.de}
\thanks{Marco Freibert partially supported by Danish
Council for Independent Research \textbar\ Natural Sciences project
DFF - 4002-00125.}

\date{}


\begin{abstract}
The Hitchin flow constructs eight-dimensional Riemannian manifolds $(M,g)$ with holonomy in $\Spin(7)$ starting with a cocalibrated $\G_2$-structure on a seven-dimensional manifold.
As $\mathrm{Sp}(2)\subseteq \SU(4)\subseteq \Spin(7)$, one may also obtain Calabi-Yau fourfolds or hyperK\"ahler manifolds via the Hitchin flow.

In this paper, we show that the Hitchin flow on almost Abelian Lie algebras and on Lie algebras with one-dimensional commutator always yields
Riemannian metrics with $Hol(g)\subseteq \SU(4)$ but $Hol(g)\neq \mathrm{Sp}(2)$.
We investigate when we actually get $Hol(g)=\SU(4)$ and obtain so many new explicit examples of Calabi-Yau fourfolds.
The results rely on the connections between cocalibrated $\G_2$-structures and hypo $\SU(3)$-structures
and between the Hitchin and the hypo flow and on a systematic study of hypo $\SU(3)$-structures and the hypo flow on Lie algebras.
This study gives us many other interesting results: We obtain full classifications of hypo $\SU(3)$-structures with particular intrinsic torsion on Lie algebras.
Moreover, we can exclude reducible or $\mathrm{Sp}(2)$-holonomy or do get $Hol(g)=\SU(4)$ for the Riemannian manifolds obtained by the
hypo flow with initial values in some other intrinsic torsion classes.
\end{abstract}

\maketitle
\section{Introduction}
In Riemannian geometry, dimension eight is of particular interest for various reasons.
First of all, an eight-dimensional Riemannian manifold $(M,g)$ can have exceptional holonomy $\Spin(7)$.
However, there are even two other irreducible special holonomy groups from Berger's list \cite{Be} in this dimension,
namely $\SU(4)$, i.e. $(M,g)$ can be Calabi-Yau, or $\mathrm{Sp}(2)$, i.e. $(M,g)$ can be hyperK\"ahler.
In all three cases, $(M,g)$ is Ricci-flat and admits one or more parallel spinor fields \cite{W}. Both properties make these manifolds also very attractive for physicists and they occur as internal spaces in $M$- or $F$-theory compactifications, cf., e.g., \cite{ALRY}, \cite{V}.

A method to construct eight-dimensional Riemannian manifolds with holonomy in $\Spin(7)$ or $\SU(4)$ is by the so-called \emph{Hitchin} or \emph{hypo flow}, respectively. These flows are systems of partial differential equations for one-parameter families of cocalibrated $\G_2$- or hypo $\SU(3)$-structures on a seven-dimensional manifold $M$, whose solution on an interval $I$ then defines a Riemannian metric $g=g_t+dt^2$ on $M\times I$ with holonomy in $\Spin(7)$ or $\SU(4)$, respectively. Conversely, given a eight-dimensional Riemannian manifold $N$ with holonomy in $\Spin(7)$ or $\SU(4)$, any oriented hypersurface $M$ in $N$ has an induced cocalibrated $\G_2$- or hypo $\SU(3)$-structure, respectively, and the induced one-parameter families of these geometric structures on equidistant hypersurfaces in $N$ fulfill the Hitchin or the hypo flow, respectively.

The Hitchin flow has been introduced in \cite{Hi} and local existence and uniqueness of solutions has been shown for real-analytic initial data in \cite{CLSS}. Analogous results have been obtained in \cite{CS} for the hypo flow in dimension five and in \cite{C} and \cite{CF} for the hypo flow in arbitrary odd dimension greater than five. Note that for smooth initial data, solutions may not exist, cf. \cite{Br} and \cite{AMM}.

A particular interesting and more manageable case occurs when the initial data is homogeneous. Then the solution of both flows stays homogeneous for all times $t\in I$, the flow equations get non-linear systems of ordinary differential equations and the solutions define a cohomogeneity one metric on $M\times I$ with holonomy contained in $\Spin(7)$ or $\SU(4)$, respectively. In the homogeneous setting, the Hitchin flow has been solved explicitly for certain initial values, cf., e.g, \cite{R1}, \cite{R2}, \cite{DFISUV}. In some of these cases, the holonomy of the outcoming Riemannian manifold has been determined and it was found that there are examples with full holonomy $\Spin(7)$ but that in many cases the holonomy reduces further to $\SU(4)\subseteq \Spin(7)$.

Generally, the Hitchin flow cannot be solved explicitly even in the homogeneous setting and it is then of interest to find conditions on the initial value
which ensure such a holonomy reduction or full holonomy. In this paper, we provide such conditions for the left-invariant Hitchin flow on certain kinds of Lie groups.
More exactly, we show that any cocalibrated $\G_2$-structure $\varphi$ on an almost Abelian Lie algebra or a Lie algebra with one-dimensional commutator
is induced by a hypo $\SU(3)$-structure $(\alpha,\omega,\psi)$ and then use the general fact that then the Hitchin flow with initial value $\varphi$ is
induced by the hypo flow with initial value $(\alpha,\omega,\psi)$, so necessarily the Hitchin flow with initial value $\varphi$ yields a
Riemannian manifold with holonomy contained in $\SU(4)$. Moreover, we exclude the third possible irreducible holonomy group $\mathrm{Sp}(2)\subseteq \SU(4)$
in dimension eight in the two mentioned cases and obtain so that the Hitchin flow on the seven-dimensional Heisenberg algebra $\mfh_7$ always yields
$\SU(4)$-holonomy metrics. Also, we examine when the Hitchin flow on almost Abelian Lie algebras $\mfg$ leads to Riemannian manifolds with holonomy equal to $\SU(4)$
if there is a basis of $\mfg$ which stays orthogonal during the Hitchin flow.

The mentioned results rely on a proper investigation of hypo $\SU(3)$-structures and of the hypo flow on seven-dimensional Lie algebras in Sections \ref{sec:hypoSU3} and \ref{sec:leftinvhypoflow}, leading also to many other interesting results for hypo $\SU(3)$-structures and the hypo flow.

More exactly, after an introduction to the $G$-structures occuring in this paper and to the two mentioned flows in Section \ref{sec:Hitchinhypoflow},
we compute the intrinsic torsion of (hypo) $\SU(3)$-structures in Subsection \ref{subsec:inttors}.
In Subsection \ref{subsec:torsionclasses}, we consider different torsion classes of hypo $\SU(3)$-structures and
obtain some general results for arbitrary manifolds $M$. On Lie algebras, we classify hypo $\SU(3)$-structures for which $\ker(\omega)$
is an ideal in terms of six-dimensional K\"ahler Lie algebras with additional data, classify all hypo $\SU(3)$-structures with $d\alpha=0$,
show that hypo $\SU(3)$-structures with invariant intrinsic torsion $(\lambda_1,\lambda_2)\in \bR^2$ with $\lambda_1\lambda_2<0$ cannot exist
and give explicit examples with $\lambda_1\lambda_2>0$ in Example \ref{ex:invinttors}.
Note that in \cite{CF}, hypo $\SU(3)$-structures on Lie algebras of the form $\bR^4\rtimes \mathfrak{h}$ with a four-dimensional solvable Lie algebra
fulfilling $d\alpha=-2\omega$ are classified, including some examples with invariant intrinsic torsion with $\lambda_1\lambda_2>0$.
However, our examples with invariant intrinsic torsion are new and not contained in \cite{CF}. 

In Section \ref{sec:leftinvhypoflow}, we look at the hypo flow on Lie algebras. We first show that the hypo flow preserves various intrinsic torsion classes and reduces to simpler flow equations in these intrinsic torsion classes. Our first main result is Theorem \ref{th:irreduciblehol}, which states that for initial values with $d\psi=i\lambda_2 \psi$ and $(d\alpha)^3\neq 0$, the hypo flow always yields Riemannian manifolds with irreducible holonomy $\mathrm{Sp}(2)$ or $\SU(4)$. The proof uses the mentioned simpler form of the flow equations to show that there is a basis which stays orthogonal during the flow, which then allows to prove that parts of the Riemannian curvature tensor do not vanish, implying good enough lower bounds on the dimension of the holonomy group by Ambrose-Singer to conclude the result. Afterwards, we investigate the possibility of holonomy equal to $\mathrm{Sp}(2)$, using representation theoretic arguments to show that then the initial hypo $\SU(3)$-structure is induced by a hypo $\mathrm{Sp}(1)$-structure and that the hypo flow comes, in fact, from a flow of hypo $\mathrm{Sp}(1)$-structures. But the existence of such an $\mathrm{Sp}(1)$-structure inducing a hypo $\SU(3)$-structure of particular intrinsic torsion implies various constraints on the Lie algebra.
These constraints together with the flow equations for the hypo $\mathrm{Sp}(1)$-structures allow us exclude holonomy equal to $\mathrm{Sp}(2)$
for the Riemannian manifold obtained by the hypo flow for initial values with $d\alpha=0$
and allow us to show that for initial values with invariant intrinsic torsion and $\lambda_1\neq 0$ we always get holonomy equal to $\SU(4)$.
Moreover, we give new explicit examples of such holonomy $\SU(4)$-metrics in Example \ref{ex:intrtorsholSU4}.

Finally, in Section \ref{sec:leftinvHitchinflow}, we put our results together in Subsection \ref{subsec:holredHitchinflow} and prove the mentioned reduction results for the holonomy of the Riemannian manifolds obtained by the Hitchin flow on almost Abelian Lie algebras, on Lie algebras with one-dimensional commutator and on $\mfh_7$. We also give an explicit example of a holonomy $\SU(4)$-metric obtained by the Hitchin flow on $\mfh_7$ in Example \ref{ex:holSU4h7}. In the final Subsection \ref{subsec:diagonalHitchin}, we determine all cocalibrated $\G_2$-structures on almost Abelian Lie algebras for which there exists a particular type of basis which stays orthogonal during the Hitchin flow and for which the outcoming Riemannian manifold has holonomy equal to $\SU(4)$. This leads then to many new explicit examples of Riemannian manifolds with holonomy equal to $\SU(4)$.

\section{The Hitchin and the hypo flow}\label{sec:Hitchinhypoflow}

In this section, we define the different kinds of $G$-structures appearing in this article and introduce the Hitchin and the hypo flow. All these $G$-structures will be defined by a collection of differential forms whose common \emph{model forms} on $\bR^n$ have stabilizer $G$. As $\SU(3)$ will appear both as a subgroup of $\SO(6)$ as well as one of $\SO(7)$, we stick to the notation given in \cite{MC} and call $\SU(n)$-structures in even dimensions \emph{special almost Hermitian structures}. More information on the discussed $G$-structures and flows as well as proofs of the mentioned properties can be found, e.g., in \cite{CF}, \cite{CLSS}, \cite{Hi}, \cite{MC} and \cite{SH}.

We start by recalling the concept of \emph{model forms}:
\begin{definition}\label{def:modelforms}
Let $M$ be an $n$-dimensional manifold and $\rho_i\in \Omega^{k_i} M$ be a $k_i$-form on $M$ for $i=1,\ldots, m$. The $m$-tuple $(\rho_1,\ldots,\rho_m)$ is said to \emph{have the model forms} $(\rho^0_1,\ldots,\rho^0_k)\in \Lambda^{k_1} \left(\bR^n\right)^* \times \ldots\times \Lambda^{k_m}\left(\bR^n\right)^*$ if for all $x\in M$ there exists an isomorphism $u:\bR^n\rightarrow T_x M$ such that $\bigl( u^*(\rho_1)_x,\ldots, u^*(\rho_m)_x\bigr)=(\rho^0_1,\ldots,\rho^0_m)$. In this case, we call $(u(e_1),\ldots,u(e_n))$ an \emph{adapted basis (for $(\rho_1,\ldots,\rho_m)$)}. If the common $\GL(n,\bR)$-stabilizer of $(\rho^0_1,\ldots,\rho^0_k)\in \Lambda^{k_1} \left(\bR^n\right)^* \times \ldots\times \Lambda^{k_m}\left(\bR^n\right)^*$ is in $\SO(n)$, then one has a well-defined induced Riemannian metric $g_{(\rho_1,\ldots,\rho_m)}$ and orientation by requiring that adapted bases are orthonormal and oriented. So we also have an induced Hodge star operator $\star_{(\rho_1,\ldots,\rho_m)}$.
\end{definition}
We proceed by defining the different geometric structures that we need in this article and start with \emph{special almost Hermitian structures}:
\begin{definition}\label{def:saHs}
Let $M$ be a $2n$-dimensional manifold with $n\geq 3$. A \emph{special almost Hermitian structure on M} is a pair $(\Omega,\Psi)\in \Omega^2 M\times \Omega^n (M,\bC)$ with model tensors
\begin{equation*}
\left(e^{12}+\ldots+e^{2n-1\, 2n},e^1_{\bC}\wedge \ldots \wedge e^n_{\bC}\right)\in \Lambda^2 \left(\bR^{2n}\right)^*\times\Lambda^n\left(\bR^{2n}\right)^*\otimes \bC,
\end{equation*}
where $e^j_{\bC}:=e^{2j-1}-ie^{2j}$ for $j=1,\ldots,n$. We usually set $\Psi_+:=\re{\Psi}$ and $\Psi_-:=\im(\Psi)$ so that $\Psi=\Psi_+ + i\Psi_-$. Note that any $\Psi\in \Omega^n(M,\bC)$ with model tensor $e^1_{\bC}\wedge \ldots \wedge e^n_{\bC}$ and any $\Omega\in \Omega^2 M$ with model tensor $e^{12}+\ldots+e^{2n-1\, 2n}$
naturally define volume forms $\phi(\Psi)$ and $\phi(\Omega)$ on $M$ by
\begin{equation*}
\phi(\Psi):=\begin{cases}
            \tfrac{1}{4} \Psi\wedge \overline{\Psi}=\tfrac{1}{2} \Psi_+^2, & \text{if $n$ is even},\\
						 \tfrac{1}{4i} \Psi\wedge \overline{\Psi}=\tfrac{1}{2} \Psi_-\wedge \Psi_+, & \text{if $n$ is odd},
            \end{cases},\qquad \phi(\Omega)=\tfrac{\Omega^n}{n!}.
\end{equation*}
For a special almost Hermitian structure, these two volume forms fulfill the normalization condition
\begin{equation}\label{eq:normalization}
\phi(\Psi)=2^{n-2} \phi(\Omega).
\end{equation}
As the common $\GL(2n,\bR)$-stabilizer of the model tensors is $\SU(n)\subseteq \SO(2n)$, we have an induced Riemannian metric $g_{(\Omega,\Psi)}$. Moreover, we get an induced almost complex structure $J_{\Psi}$ if we require that for any $x\in M$ and for any adapted basis $(f_1,\ldots,f_{2n})$ at $x$ we have
\begin{equation*}
\left(J_{\Psi}\right)_x(f_{2i-1})=-f_{2i},\quad \left(J_{\Psi}\right)_x(f_{2i})=f_{2i-1}
\end{equation*}
for all $i=1,\ldots,n$. The stabilizer of the model tensor of $\Psi$ is $\SL(n,\bC)\subseteq \GL(n,\bC)\subseteq \GL(2n,\bR)$ and so $J_{\Psi}$ depends, in fact, only on $\Psi$ as the notation indicates. Even more, if $n=2l-1$ is odd and $M$ is oriented, then already a real $n$-form $\Psi_+$ with model tensor $\re\left(e^1_{\bC}\wedge \ldots \wedge e^n_{\bC}\right)$ induces an almost complex structure given in an adapted basis as above and $\Psi:=\Psi_+ +i (-1)^l J_{\Psi_+}^*\Psi_+$ has model tensor $e^1_{\bC}\wedge \ldots \wedge e^n_{\bC}$, cf., e.g., \cite[Chapter 1, Proposition 1.5]{SH}. In this situation, we set $\phi(\Psi_+):=\phi(\Psi)$.

Coming back to arbitrary special almost Hermitian structures, one easily sees that the pair $(g_{(\Omega,\Psi)},J_{\Psi})$ constitutes an almost Hermitian structure with fundamental two-form $\Omega$, i.e. $J_{\Psi}$ is orthogonal with respect to $g_{(\Omega,\Psi)}$ and $\Omega=g_{(\Omega,\Psi)}(\cdot,J_{\Psi}\cdot)$. Now an almost Hermitian structure is K\"ahler if and only if the fundamental two-form is closed and the almost complex structure is integrable. So the induced almost Hermitian structure is K\"ahler if and only if
\begin{equation*}
\begin{split}
d\Omega=&\,0,\\
d\Psi=&\,(\gamma+i J_{\Psi}^*\gamma)\wedge \Psi=\gamma\wedge \Psi_+-J_{\Psi}^*\gamma\wedge \Psi_-+i\left(\gamma\wedge \Psi_-+J_{\Psi}^*\gamma\wedge \Psi_+\right)
\end{split}
\end{equation*}
for some $\gamma\in \Omega^1 M$, cf., e.g., \cite[Chapter 3, Proposition 1.3]{SH}.
However, $J_{\Psi}^*\gamma\wedge \Psi_-=-\gamma\wedge \Psi_+$ and $J_{\Psi}^*\gamma\wedge \Psi_+=\gamma\wedge \Psi_-$, and so the second equation
is equivalent to $d\Psi=\beta\wedge \Psi$ for some $\beta\in \Omega^1 M$. We also call $(\Omega,\Psi)$ then \emph{K\"ahler} and if additionally $\beta=0$, i.e. if
\begin{equation*}
d\Omega=0,\quad d\Psi=0,
\end{equation*}
$(\Omega,\Psi)$ is called a \emph{Calabi-Yau structure}. In this case, $(\Omega,\Psi)$ is parallel, $Hol(g_{(\Omega,\Psi)})\subseteq \SU(n)$ and $g_{(\Omega,\Psi)}$ is Ricci-flat. Note that if $n\geq 4$, then $(\Omega,\Psi)$ is already Calabi-Yau if $d\Omega=0$ and $d\Psi_+=0$ as then automatically $d\Psi_-=0$ \cite{MC}.
\end{definition}
\begin{remark}\label{re:stable}
The case $n=3$ is rather special in the sense that the set of real $3$-forms on a six-dimensional vector space $V$ with model tensor $\re\left(e^1_{\bC}\wedge e^2_{\bC} \wedge e^3_{\bC}\right)$ is an open subset of $\Lambda^3 V^*$. Note further that the set of two-forms on $V$ with model tensor $e^{12}+e^{34}+e^{56}$ is open in $\Lambda^2 V^*$ as well. Moreover, a pair $(\Omega,\psi)\in \Lambda^2 V^*\times \Lambda^3 V^*$ is a special almost Hermitian if and only if both have individually the model tensors $e^{12}+e^{34}+e^{56}$ and $\re\left(e^1_{\bC}\wedge e^2_{\bC} \wedge e^3_{\bC}\right)$, respectively, $\omega\wedge \psi=0$, the normalization condition \eqref{eq:normalization} holds for $n=3$ and $\omega(J_{\psi}\cdot,\cdot)$ is positive-definite, the latter condition being an open condition if the other conditions hold.
\end{remark}
Next, we define \emph{$\SU(3)$-structures} on \emph{seven} dimensional manifolds.
\begin{definition}
Let $M$ be a seven-dimensional manifold. An \emph{$SU(3)$-structure on $M$} is a triple $(\alpha,\omega,\psi)\in \Omega^1 M\times \Omega^2 M \times \left(\Omega^3 (M,\bC)\right)$ with model tensor
\begin{equation*}
(\alpha^0,\omega^0,\psi^0):=\left(e^7,e^{12}+e^{34}+e^{56},e^1_{\bC}\wedge e^2_{\bC}\wedge e^3_{\bC}\right)\in \left(\bR^7\right)^*\times \Lambda^2 \left(\bR^7\right)^*\times \Lambda^3 \left(\bR^7\right)^*.
\end{equation*}

We always set $\rho:=\re(\psi)$ and $\hat{\rho}:=\im(\psi)$ so that $\psi=\rho+i\hat{\rho}$. There is a natural six-dimensional distribution $\cD_{\alpha}$ defined as the kernel of $\alpha$ and a complementary one-dimensional distribution $\cD_{\omega}$ defined as the kernel of $\omega$. Moreover, we denote in this situation by $X$ the vector field tangential to $\cD_{\omega}\subseteq TM$ with $\alpha(X)=1$ and call it the \emph{Reeb} vector field of $(\alpha,\omega,\psi)$.

Now the restriction of $(\omega,\psi)$ to $\cD_{\alpha}$ is a special almost Hermitian structure on $\cD_{\alpha}$ in the sense that for any $x\in M$ it is a special almost Hermitian structure on the vector space $(\cD_{\alpha})_x$. By the above, we have an almost Hermitian structure $(g_{(\omega,\psi)},J_{\psi})$ on $\cD_{\alpha}$ and we define a Riemannian metric $g_{(\alpha,\omega,\psi)}$ on $M$ by
\begin{equation*}
g_{(\alpha,\omega,\psi)}=g_{(\omega,\psi)}+d\alpha^2.
\end{equation*}
This Riemannian metric coincides with the one induced by the $\SU(3)$-structure as in Definition \ref{def:modelforms}. Similarly, we extend $J_{\psi}$ to a vector bundle morphism $J_{(\alpha,\psi)}$ of $TM$ by $J_{(\alpha,\psi)}(X)=0$. Note that then $\left(J_{(\alpha,\psi)},X,\alpha,g_{(\alpha,\omega,\psi)}\right)$ is an almost contact metric structure on $M$ with associated fundamental two-form $\omega$.

We define $(1,0)$- and $(0,1)$-forms $\beta\in \Omega^1 (M,\bC)$ by requiring that $\beta\circ J_{(\alpha,\psi)}=i\beta$ or $\beta\circ J_{(\alpha,\psi)}=-i\beta$, respectively. This allows then to define also (complex) $(p,q)$-forms and real forms of type $(p,q)$ and $(q,p)$. Note that all these forms have $\cD_{\alpha}$ in their kernel. For these forms, we may define a Lefschetz operator and so also \emph{primitive} differential forms. We use the usual notations for all these spaces like $\Omega^{p,q} M$ for the space of all complex $(p,q)$-forms or $[[\Omega_0^{p,q} M]]$ for the space of all primitive real forms of type $(p,q)$ and $(q,p)$ noting that now $\Lambda^k TM \otimes \bC\neq \sum_{i=0}^ k \Lambda^{i,k-i}$. Finally, we call an $\SU(3)$-structure $(\alpha,\omega,\psi)$ \emph{hypo} if
\begin{equation*}
d\omega=0,\quad d(\alpha\wedge \psi)=0.
\end{equation*}
\end{definition}
Next, we consider another $\G$-structure in seven dimensions.
\begin{definition}
\item
Let $M$ be a seven-dimensional manifold. A \emph{$\G_2$-structure on $M$} is a three-form $\varphi\in \Omega^3 M$ with model tensor
\begin{equation*}
\varphi^0:=\omega^0\wedge \alpha^0+\rho^0.
\end{equation*}
As the $\GL(7,\bR)$-stabilizer of $\varphi^0$ is $\G_2\subseteq \SO(7)$, we get an induced Riemannian metric $g_{\varphi}$ and an induced orientation and so also a Hodge star operator $\star_{\varphi}$. One then has
\begin{equation*}
\left(\star_{\varphi}\varphi\right)_x=f^{1234}+f^{1256}+f^{3456}+f^{1367}+f^{1457}+f^{2367}-f^{2467}
\end{equation*}
for all $x\in M$ and for any adapted basis $\left(f^1,\ldots,f^n\right)$ at $x\in M$. $\varphi$ is called \emph{cocalibrated} if
\begin{equation*}
d\star_{\varphi}\varphi=0.
\end{equation*}
One knows that $\varphi$ is parallel, and so $Hol(g_{\varphi})\subseteq \G_2$, if and only $d\varphi=0$ and $d\star_{\varphi}\varphi=0$ \cite{FG}.
\end{definition}
Finally, we will also need the following $\G$-structure in eight dimensions.
\begin{definition}\label{def:Spin7}
Let $M$ be an eight-dimensional manifold. A four-form $\Phi\in \Omega^4 M$ is called \emph{$\Spin(7)$-structure on $M$} if it has model tensor
\begin{equation*}
\Phi^0=\varphi^0\wedge e^8+\star_{\varphi^0}\varphi^0\in \Lambda^4 \left(\bR^8\right)^*
\end{equation*}
As the stabilizer of $\Phi^0$ is $\Spin(7)\subseteq \SO(8)$, we have an induced Riemannian metric $g_{\Phi}$. Moreover, $\Phi$ is parallel with respect to $\nabla^{g_{\Phi}}$ if and only if $d\Phi=0$ and then $Hol(g_{\Phi})\subseteq\Spin(7)$.
\end{definition}
Next, we recall the Hitchin flow:
\begin{proposition}[Hitchin]\label{pro:Hitchinflow}
Let $M$ be a seven-dimensional manifold, $I$ be an open interval and $t$ be the standard coordinate on $I$. Moreover, let $\Phi\in \Omega^4 (M\times I)$ be a parallel $\Spin(7)$-structure on $M\times I$ such that the induced Riemannian metric is of the form $g(t)+dt^2$. Then the induced smooth one-parameter family $I\ni t\mapsto \varphi(t)\in \Omega^3 M$ given by $\varphi(t):=-\left.\frac{\partial}{\partial t}\hook \Phi\right|_{M\times \{t\}}$ consists of cocalibrated $\G_2$-structures which fulfill \emph{Hitchin's flow equations}
\begin{equation}\label{eq:Hitchinflow}
\frac{d}{dt}\star_{\varphi(t)}\varphi(t)=-d\varphi(t).
\end{equation}
Conversely, any smooth one-parameter family $I\ni t\mapsto \varphi(t)$ of $\G_2$-structures on $M$ which is cocalibrated for some $t_0\in I$ and fulfills Hitchin's flow equations \eqref{eq:Hitchinflow} on $I$ defines a parallel $\Spin(7)$-structure $\Phi$ on $M\times I$ given by
\begin{equation}\label{eq:parallelSpin7}
\Phi:=\varphi(t)\wedge dt+\star_{\varphi(t)}\varphi(t).
\end{equation}
The Riemannian metric $g_{\Phi}$ on $M\times I$ is given by $g_{\Phi}=g_{\varphi(t)}+dt^2$ and has holonomy in $\Spin(7)$.
\end{proposition}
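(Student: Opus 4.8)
The statement to prove is Hitchin's Proposition~\ref{pro:Hitchinflow}, which has two directions: that an oriented equidistant foliation of a $\Spin(7)$-manifold induces a one-parameter family of cocalibrated $\G_2$-structures solving \eqref{eq:Hitchinflow}, and conversely that a solution of \eqref{eq:Hitchinflow} with cocalibrated initial datum reconstructs a parallel $\Spin(7)$-structure via \eqref{eq:parallelSpin7}.

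\textbf{Plan for the forward direction.} Write $\Phi$ on $M\times I$ with $g_\Phi=g(t)+dt^2$, and set $\partial_t$ for the coordinate vector field on the $I$-factor; note $\|\partial_t\|_{g_\Phi}=1$ and $\partial_t$ is $g_\Phi$-orthogonal to each slice $M\times\{t\}$. The first step is to identify the pointwise algebra: at a point where $g_\Phi$ is standard and $\partial_t=e_8$, one has $\Phi^0=\varphi^0\wedge e^8+\star_{\varphi^0}\varphi^0$ as in Definition~\ref{def:Spin7}, so $-\,\partial_t\hook\Phi^0=\varphi^0$ (the interior product kills the second summand, which has no $e^8$, and picks out $\varphi^0$ from the first). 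Hence $\varphi(t):=-\partial_t\hook\Phi|_{M\times\{t\}}$ is a $\G_2$-structure on each slice, its induced metric is the restriction $g(t)$, and moreover $\Phi|_{M\times\{t\}}$ (pull-back under the slice inclusion, i.e. contracting away all $dt$'s) equals $\star_{\varphi(t)}\varphi(t)$ — this is just the other summand, and one checks that the Hodge star induced by $\varphi(t)$ on the slice agrees with the restriction, using $g(t)$. Thus $\Phi=\varphi(t)\wedge dt+\star_{\varphi(t)}\varphi(t)$ identically. Now differentiate: $0=d\Phi=d_M\varphi(t)\wedge dt+(-1)\,\dot\varphi(t)\wedge dt+d_M(\star_{\varphi(t)}\varphi(t))+\tfrac{d}{dt}(\star_{\varphi(t)}\varphi(t))\wedge dt$, where $d=d_M+dt\wedge\partial_t$ and signs come from moving $dt$ past a form; the ``no-$dt$'' part gives $d_M(\star_{\varphi(t)}\varphi(t))=0$, i.e. each $\varphi(t)$ is cocalibrated, and the coefficient of $dt$ gives $\tfrac{d}{dt}\star_{\varphi(t)}\varphi(t)=d_M\varphi(t)$ up to an overall sign, which is exactly \eqref{eq:Hitchinflow} once the sign bookkeeping is done carefully.

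\textbf{Plan for the converse.} Given the family $\varphi(t)$ cocalibrated at $t_0$ and solving \eqref{eq:Hitchinflow}, \emph{define} $\Phi$ by \eqref{eq:parallelSpin7}. First check $\Phi$ is a $\Spin(7)$-structure on $M\times I$: pointwise, picking an adapted basis for $\varphi(t)$ on the slice and adjoining $dt$, the tensor $\Phi$ has exactly the model form $\Phi^0$ of Definition~\ref{def:Spin7}, and the induced metric is $g_{\varphi(t)}+dt^2$. Next check $d\Phi=0$: again split $d=d_M+dt\wedge\partial_t$. The ``no-$dt$'' part of $d\Phi$ is $d_M(\star_{\varphi(t)}\varphi(t))$; this vanishes at $t_0$ by the cocalibrated hypothesis, and its $t$-derivative is $d_M\tfrac{d}{dt}\star_{\varphi(t)}\varphi(t)=-d_M d_M\varphi(t)=0$ by \eqref{eq:Hitchinflow}, so by uniqueness of the ODE $d_M(\star_{\varphi(t)}\varphi(t))\equiv0$ for all $t$, i.e. $\varphi(t)$ stays cocalibrated. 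The coefficient of $dt$ in $d\Phi$ is then $d_M\varphi(t)-\tfrac{d}{dt}\star_{\varphi(t)}\varphi(t)$ (up to sign), which is zero by \eqref{eq:Hitchinflow}. Hence $d\Phi=0$, so $\Phi$ is parallel by Definition~\ref{def:Spin7} and $Hol(g_\Phi)\subseteq\Spin(7)$, with $g_\Phi=g_{\varphi(t)}+dt^2$ as just noted.

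\textbf{Main obstacle.} The conceptual content is light; the real work is the two pointwise linear-algebra lemmas — (i) that $-\partial_t\hook\Phi^0=\varphi^0$ and the restriction of $\Phi^0$ to the hyperplane is $\star_{\varphi^0}\varphi^0$, equivalently that \eqref{eq:parallelSpin7} always produces the model tensor $\Phi^0$; and (ii) that the Hodge star on the slice induced by $\varphi(t)$ coincides with the ambient operation ``drop $dt$'', which requires matching orientations and the fact that $\star_{\varphi(t)}\varphi(t)$ is a $4$-form with no $dt$-component. Both reduce to computing with the explicit model forms $\varphi^0=\omega^0\wedge\alpha^0+\rho^0$ and $\star_{\varphi^0}\varphi^0=f^{1234}+f^{1256}+f^{3456}+f^{1367}+f^{1457}+f^{2367}-f^{2467}$ already recorded in the excerpt, so they are routine but sign-sensitive. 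The other place to be careful is the consistent use of the splitting $d=d_M+dt\wedge\partial_t$ together with the Leibniz sign $d(\eta\wedge dt)=(d_M\eta)\wedge dt$ (since $d(dt)=0$ and $dt\wedge dt=0$), and making sure the single overall sign in \eqref{eq:Hitchinflow} comes out with a minus rather than a plus; I would fix conventions once at the start and track them throughout.
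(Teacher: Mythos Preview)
The paper does not give its own proof of Proposition~\ref{pro:Hitchinflow}; the result is stated with attribution to Hitchin and referenced to \cite{Hi} and \cite{CLSS}. Your argument is the standard one and is correct: decompose $\Phi=\varphi(t)\wedge dt+\star_{\varphi(t)}\varphi(t)$, split $d=d_M+dt\wedge\partial_t$, and read off from $d\Phi=0$ both the cocalibration $d_M(\star_{\varphi(t)}\varphi(t))=0$ (the part without $dt$) and the flow equation \eqref{eq:Hitchinflow} (the coefficient of $dt$); for the converse the same computation runs backwards, together with the observation that $\tfrac{d}{dt}\bigl(d_M(\star_{\varphi(t)}\varphi(t))\bigr)=-d_M^2\varphi(t)=0$ so cocalibration propagates from $t_0$ to all of $I$. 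One small remark: the propagation step is not ``uniqueness of an ODE'' but simply that the $t$-derivative vanishes identically, hence the five-form $d_M(\star_{\varphi(t)}\varphi(t))$ is constant in $t$.
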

The Riemannian manifold $(M\times I, g)$ obtained by the Hitchin flow has, in general, not holonomy equal to $\Spin(7)$. We are, in fact, interested in the cases when the holonomy is less than $\Spin(7)$ but still irreducible. Then the holonomy is either equal to $\SU(4)$ or to $\mathrm{Sp}(2)$ and we have a Calabi-Yau structure or a hyperK\"ahler structure, respectively. We will mainly talk about the first case and refer for a discussion of the second case to Subsection \ref{subsec:hyperKahler}. Similarly to above for $\Spin(7)$-structures on $M\times I$, we may obtain Calabi-Yau structures of certain kind on $M\times I$ by the flow of one-parameter families of structures induced on the hypersurfaces $M\times \{t\}$. Here, the induced structures are hypo $\SU(3)$-structures. More exactly, we have:
\begin{proposition}[Conti, Fino]\label{pro:hypoflow}
Let $M$ be a seven-dimensional manifold, $I$ be an open interval and $t$ be the standard coordinate on $I$.
Let $(\Omega,\Psi)\in \Omega^2 (M\times I)\times \Omega^4 (M\times I)\otimes \bC$ be a Calabi-Yau structure on $M\times I$ such that the induced Riemannian metric is of the form $g(t)+dt^2$. Then the induced smooth one-parameter family $I\ni t\mapsto (\alpha(t),\omega(t),\psi(t))\in \Omega^1 M\times \Omega^2 M\times \Omega^3 (M,\bC)$ given by $\alpha(t):=-\left.\frac{\partial}{\partial t}\hook \Omega\right|_{M\times \{t\}}$, $\omega(t):=\left.\Omega\right|_{M\times \{t\}}$, $\psi(t):=-i\left.\frac{\partial}{\partial t}\hook\Psi\right|_{M\times \{t\}}$ consists of hypo $\SU(3)$-structures which fulfill the \emph{hypo flow equations (for $\SU(3)$-structures)}
\begin{equation}\label{eq:hypoflowSU3}
\frac{d}{dt}\omega(t)=-d\alpha(t),\quad \frac{d}{dt}\left(\alpha(t)\wedge \psi(t)\right)=-id\psi(t)
 \end{equation}
Conversely, any smooth one-parameter family $I\ni t\mapsto (\alpha(t),\omega(t),\psi(t))$ of $\SU(3)$-structures on $M$ which is hypo for some $t_0\in I$ and fulfills the hypo flow equations \eqref{eq:hypoflowSU3} on $I$ defines a Calabi-Yau structure $(\Omega,\Psi)$ on $M\times I$ given by
\begin{equation}\label{eq:parallelSU4}
\Omega:=\omega(t)+\alpha(t)\wedge dt,\quad \Psi:=\psi(t)\wedge (\alpha(t)-idt)
\end{equation}
Moreover, the induced Riemannian metric $g_{(\Omega,\Psi)}$ on $M\times I$ is given by $g_{(\Omega,\Psi)}=\linebreak g_{(\alpha(t),\omega(t),\psi(t))}+dt^2$ and has holonomy in $\SU(4)$.
\end{proposition}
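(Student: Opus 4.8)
The plan is to adapt to the $\SU(4)$-setting the classical argument behind the Hitchin flow in Proposition \ref{pro:Hitchinflow}. The basic device is that on $M\times I$ every differential form can be written uniquely as $\eta_0(t)+dt\wedge\eta_1(t)$ with $\eta_0,\eta_1$ being $t$-dependent forms on $M$, and then $d\bigl(\eta_0+dt\wedge\eta_1\bigr)=d_M\eta_0+dt\wedge\bigl(\tfrac{d}{dt}\eta_0-d_M\eta_1\bigr)$, so that every condition involving $d$ on $M\times I$ splits into a ``spatial'' and an ``evolution'' equation. (The statement is essentially due to Conti and Fino, cf.\ \cite{C}, \cite{CF}; the proof is the obvious analogue of Hitchin's.)

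For the direct implication I would work at a fixed point $(x,t)\in M\times I$. Since the induced metric $g_{(\Omega,\Psi)}$ has the product form $g(t)+dt^2$, the vector $\partial_t$ is a unit vector orthogonal to $T_xM$, and because $\SU(4)\subseteq\SO(8)$ acts transitively on the unit sphere of $\bR^8=\bC^4$ one may choose an adapted basis $(f_1,\dots,f_8)$ for $(\Omega,\Psi)$ at $(x,t)$ with $f_8=\partial_t$, whence $f_1,\dots,f_7$ is a basis of $T_xM$. Contracting the model tensors $\Omega^0=f^{12}+f^{34}+f^{56}+f^{78}$ and $\Psi^0=f^1_{\bC}\wedge f^2_{\bC}\wedge f^3_{\bC}\wedge(f^7-if^8)$ with $\partial_t=f_8$ and restricting to $T_xM$, I would verify that $\omega(t):=\Omega|_{M\times\{t\}}$, $\alpha(t):=-(\partial_t\hook\Omega)|_{M\times\{t\}}$ and $\psi(t):=-i\,(\partial_t\hook\Psi)|_{M\times\{t\}}$ carry the model tensors $(\alpha^0,\omega^0,\psi^0)$ with adapted basis $(f_1,\dots,f_7)$, hence define an $\SU(3)$-structure on $M$ depending smoothly on $t$; the same normal form gives $\Omega=\omega(t)+\alpha(t)\wedge dt$ and $\Psi=\psi(t)\wedge(\alpha(t)-i\,dt)$. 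Splitting $d\Omega=0$ and $d\Psi=0$ as above then yields $d\omega(t)=0$, $\tfrac{d}{dt}\omega(t)=-d\alpha(t)$, $d(\alpha(t)\wedge\psi(t))=0$ and $\tfrac{d}{dt}(\alpha(t)\wedge\psi(t))=-i\,d\psi(t)$, i.e.\ the $\SU(3)$-structures are hypo for every $t$ and solve the hypo flow equations \eqref{eq:hypoflowSU3}.

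For the converse I would define $(\Omega,\Psi)$ on $M\times I$ by \eqref{eq:parallelSU4} and run the point-wise computation backwards: if $(f_1,\dots,f_7)$ is an adapted basis for $(\alpha(t),\omega(t),\psi(t))$ at $x\in M$, then $(f_1,\dots,f_7,\partial_t)$ is an adapted basis for $(\Omega,\Psi)$ at $(x,t)$, so $(\Omega,\Psi)$ is a special almost Hermitian structure (the case $n=4$) on $M\times I$ with induced metric $g_{(\alpha(t),\omega(t),\psi(t))}+dt^2$. Splitting $d\Omega$ and $d\Psi$ once more, the evolution parts vanish precisely because \eqref{eq:hypoflowSU3} holds, while the spatial parts $d\omega(t)$ and $d(\alpha(t)\wedge\psi(t))$ are independent of $t$, since $\tfrac{d}{dt}d\omega(t)=-d^2\alpha(t)=0$ and $\tfrac{d}{dt}d(\alpha(t)\wedge\psi(t))=-i\,d^2\psi(t)=0$; as they vanish at $t_0$ by the hypo condition there, they vanish identically, so $d\Omega=0=d\Psi$. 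Then by the discussion in Definition \ref{def:saHs} the Calabi-Yau structure $(\Omega,\Psi)$ is parallel and $Hol(g_{(\Omega,\Psi)})\subseteq\SU(4)$. I expect the only genuinely non-routine step to be the extraction of the model tensors at a point --- justifying the choice $f_8=\partial_t$ via transitivity of $\SU(4)$ on the unit sphere of $\bC^4$ and matching the $\SU(4)$- and $\SU(3)$-model tensors through $f^4_{\bC}=f^7-if^8$; everything else is careful sign bookkeeping in the splitting $d=d_M+dt\wedge\partial_t$ and in \eqref{eq:parallelSU4}, which is pinned down once this normal form is in place.
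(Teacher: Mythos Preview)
The paper does not actually prove this proposition: it is stated with attribution to Conti and Fino and justified by the references \cite{C}, \cite{CF}, with no proof given in the text. Your proposal supplies precisely the standard argument one would expect --- using transitivity of $\SU(4)$ on $S^7$ to pick an adapted basis with $f_8=\partial_t$, matching model tensors to see that $(\alpha(t),\omega(t),\psi(t))$ is an $\SU(3)$-structure, and splitting $d=d_M+dt\wedge\partial_t$ to translate $d\Omega=0$, $d\Psi=0$ into the hypo condition plus the flow equations (and conversely, using $d_M^2=0$ to propagate the hypo condition from $t_0$ to all of $I$). The sign bookkeeping you describe checks out, and the argument is correct; there is simply nothing in the paper to compare it against beyond the citation.
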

\begin{remark}
\begin{itemize}
\item
Note that there are some different sign conventions in Definition \ref{def:Spin7}, Proposition \ref{pro:Hitchinflow} and Proposition \ref{pro:hypoflow} as, e.g., in \cite{CLSS} and \cite{CF}.
\item
Note further that the flows studied in Proposition \ref{pro:Hitchinflow} and Proposition \ref{pro:hypoflow} are known to admit a unique local solution on an open neighborhood $U$ of $M\times \{0\}$ in $M\times \bR$ for a given initial real-analytic cocalibrated $\G_2$-structure or real-analytic hypo $\SU(3)$-structure on a real-analytic seven-dimensional manifold $M$ \cite{Hi}, \cite{C}, respectively. For the hypo case, note that \cite{C} states only the existence of a solution. However, the proof is based on the Cartan-K\"ahler theorem which gives also uniqueness in the considered case as $M$ has codimension one in $M\times \bR$. In the homogeneous or compact case, we may choose $U=M\times I$ for some open interval $I$ containing $0$.  Note further that in the smooth category, these flows do, in general, not have a local solution, cf. \cite{Br}, \cite{AMM}.
\end{itemize}
\end{remark}
As $\SU(3)\subseteq \G_2$, any $\SU(3)$-structure induces a $\G_2$-structure, which turns out to be cocalibrated if the $\SU(3)$-structure is hypo. Moreover, the solutions of the hypo and the Hitchin flow for these initial values are then related in the same way:
\begin{lemma}\label{le:hypotococalibrated}
Let $M$ be a seven-dimensional manifold. Let $(\alpha,\omega,\psi)\in \Omega^1 M\times \Omega^2 M\times \Omega^3 M$ be a hypo $\SU(3)$-structure on $M$. Then $\varphi:=\omega\wedge \alpha-\hat{\rho}\in \Omega^3 M$ is a cocalibrated $\G_2$-structure with Hodge dual $\star_{\varphi}\varphi=\frac{\omega^2}{2}+\alpha\wedge \rho$ inducing the same metric as $(\alpha,\omega,\psi)$. Moreover, if $I\ni t\mapsto (\alpha(t),\omega(t),\psi(t))\in \Omega^1 M\times \Omega^2 M\times \Omega^3 M$ is a solution of the hypo flow for $\SU(3)$-structures on $M$ with initial value $(\alpha,\omega,\psi)$, then $t\mapsto \varphi(t):=\omega(t)\wedge \alpha(t)-\hat{\rho}(t)\in \Omega^3 M$ is a solution of the Hitchin flow on $M$ with initial value $\varphi$ and the induced Riemannian metric on $M\times I$ coincides with the one induced by $t\mapsto (\alpha(t),\omega(t),\psi(t))$.
\end{lemma}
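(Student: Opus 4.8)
The plan is to verify the algebraic claim pointwise first, then the flow compatibility. For the pointwise statement, observe that a hypo $\SU(3)$-structure $(\alpha,\omega,\psi)$ has model tensor $(e^7, e^{12}+e^{34}+e^{56}, e^1_{\bC}\wedge e^2_{\bC}\wedge e^3_{\bC})$, so $\hat\rho=\im(\psi)$ has model tensor $\im\left(e^1_{\bC}\wedge e^2_{\bC}\wedge e^3_{\bC}\right)$. Plugging in, $\varphi=\omega\wedge\alpha-\hat\rho$ has model tensor $(e^{12}+e^{34}+e^{56})\wedge e^7-\im\left(e^1_{\bC}\wedge e^2_{\bC}\wedge e^3_{\bC}\right)$. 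Expanding $e^j_{\bC}=e^{2j-1}-ie^{2j}$ one computes $\im\left(e^1_{\bC}\wedge e^2_{\bC}\wedge e^3_{\bC}\right)=-e^{135}+e^{146}+e^{236}+e^{245}$, which up to the sign convention and a relabelling matches $\rho^0$ so that $\varphi$ has the model tensor $\omega^0\wedge\alpha^0+\rho^0$ (possibly after adjusting the fixed adapted basis by an orientation-preserving change preserving the model tensors); hence $\varphi$ is genuinely a $\G_2$-structure. Since the common stabilizer of $(\alpha^0,\omega^0,\psi^0)$ is $\SU(3)\subseteq\G_2$ and the $\G_2$-metric is determined by $\varphi^0$ alone, the metric induced by $\varphi$ coincides with $g_{(\alpha,\omega,\psi)}$; the Hodge star is then computed on the model tensor, giving $\star_{\varphi}\varphi=\tfrac{1}{2}\omega^2+\alpha\wedge\rho$.

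Next I would show $\varphi$ is cocalibrated. Using the just-derived formula, $d\star_{\varphi}\varphi=d\left(\tfrac12\omega^2\right)+d(\alpha\wedge\rho)=\omega\wedge d\omega+d(\alpha\wedge\rho)$. The hypo conditions are exactly $d\omega=0$ and $d(\alpha\wedge\psi)=0$; taking real parts of the latter gives $d(\alpha\wedge\rho)=0$. Therefore $d\star_{\varphi}\varphi=0$, i.e. $\varphi$ is cocalibrated.

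For the flow statement, suppose $t\mapsto(\alpha(t),\omega(t),\psi(t))$ solves the hypo flow \eqref{eq:hypoflowSU3}, so $\tfrac{d}{dt}\omega=-d\alpha$ and $\tfrac{d}{dt}(\alpha\wedge\psi)=-id\psi$. Set $\varphi(t)=\omega(t)\wedge\alpha(t)-\hat\rho(t)$. By the pointwise part, for each $t$ this is a $\G_2$-structure with $\star_{\varphi(t)}\varphi(t)=\tfrac12\omega(t)^2+\alpha(t)\wedge\rho(t)$; it is cocalibrated at $t_0$ because $(\alpha(t_0),\omega(t_0),\psi(t_0))$ is hypo. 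It remains to check Hitchin's equation $\tfrac{d}{dt}\star_{\varphi(t)}\varphi(t)=-d\varphi(t)$. On the left, $\tfrac{d}{dt}\left(\tfrac12\omega^2+\alpha\wedge\rho\right)=\omega\wedge\tfrac{d}{dt}\omega+\re\left(\tfrac{d}{dt}(\alpha\wedge\psi)\right)=\omega\wedge(-d\alpha)+\re(-id\psi)=-\omega\wedge d\alpha+d\hat\rho$ (using $\re(-i\psi)=\im(\psi)=\hat\rho$ and that $d$ commutes with $\re$). On the right, $-d\varphi(t)=-d(\omega\wedge\alpha-\hat\rho)=-d\omega\wedge\alpha+\omega\wedge d\alpha+d\hat\rho$; wait — I must be careful with signs: $d(\omega\wedge\alpha)=d\omega\wedge\alpha+\omega\wedge d\alpha$ since $\omega$ has even degree, but actually $d(\omega\wedge\alpha)=d\omega\wedge\alpha+(-1)^2\omega\wedge d\alpha = d\omega\wedge\alpha+\omega\wedge d\alpha$, hence $-d\varphi=-d\omega\wedge\alpha-\omega\wedge d\alpha+d\hat\rho$. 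These two expressions agree once one matches $d\omega\wedge\alpha$ against the remaining terms; here the hypo condition $d\omega=0$ is not assumed to persist, so the honest computation must instead expand $d\varphi(t)$ and $\tfrac{d}{dt}\star_{\varphi(t)}\varphi(t)$ keeping all terms and compare, which is a direct (if slightly fiddly) bookkeeping of signs in $\Lambda^* M$. The main obstacle is exactly this sign-and-degree bookkeeping: making sure the conventions in Proposition \ref{pro:Hitchinflow}, Proposition \ref{pro:hypoflow}, and the definition of $\varphi=\omega\wedge\alpha-\hat\rho$ are mutually consistent so that the two sides of Hitchin's equation match identically under the hypo flow equations, with no spurious factor of $\pm1$ or $\pm i$. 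Finally, the coincidence of the induced metrics on $M\times I$ follows because at each $t$ the $\G_2$-metric $g_{\varphi(t)}$ equals $g_{(\alpha(t),\omega(t),\psi(t))}$ by the pointwise part, and both total metrics are of the form (time-$t$ metric)$+dt^2$ by Propositions \ref{pro:Hitchinflow} and \ref{pro:hypoflow}.
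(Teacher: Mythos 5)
Your overall strategy is the same as the paper's (check the model tensors pointwise, then differentiate the Hodge dual using the hypo flow equations), but the final step has a genuine gap that you yourself flag and then mischaracterize. After correctly computing
\[
\tfrac{d}{dt}\star_{\varphi(t)}\varphi(t)=-\omega(t)\wedge d\alpha(t)+d\hat{\rho}(t),\qquad
-d\varphi(t)=-d\omega(t)\wedge \alpha(t)-\omega(t)\wedge d\alpha(t)+d\hat{\rho}(t),
\]
you are left with the extra term $-d\omega(t)\wedge\alpha(t)$ and write that resolving it is ``a direct (if slightly fiddly) bookkeeping of signs.'' It is not: no sign convention makes that term disappear. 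What closes the argument is the observation that the hypo flow preserves the closedness of $\omega$: from $\omega'(t)=-d\alpha(t)$ one gets $\tfrac{d}{dt}\,d\omega(t)=d\omega'(t)=-d^2\alpha(t)=0$, so $d\omega(t)=d\omega(t_0)=0$ for all $t\in I$ (equivalently, Proposition \ref{pro:hypoflow} guarantees that a solution which is hypo at $t_0$ is hypo for all $t$). With that one line the two sides agree and Hitchin's equation holds; as stated, your proof does not establish it.

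Two smaller points. First, your explicit formula $\im\bigl(e^1_{\bC}\wedge e^2_{\bC}\wedge e^3_{\bC}\bigr)=-e^{135}+e^{146}+e^{236}+e^{245}$ is incorrect (the imaginary part is $-e^{136}-e^{145}-e^{235}+e^{246}$); the conclusion you want is still true, but it is obtained by exhibiting the explicit re-ordered adapted basis $(-e_2,e_1,-e_4,e_3,-e_6,e_5,e_7)$ for $\varphi$, which is what the paper does and what your ``possibly after adjusting the fixed adapted basis'' is silently relying on. Second, the rest of your argument — cocalibration from $d\omega=0$ and $d(\alpha\wedge\rho)=\re\,d(\alpha\wedge\psi)=0$, and the identification of the metrics via the common adapted basis — is correct and matches the paper.
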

\begin{proof}
The fact that $\varphi=\omega\wedge \alpha-\hat{\rho}$ is a $\G_2$-structure with Hodge dual $\frac{\omega^2}{2}+\alpha\wedge \rho$ can be checked using
at each point $p\in M$ an adapted basis $(e_1,\ldots,e_7)$ for $(\alpha,\omega,\psi)$ and
noting that then $(-e_2,e_1,-e_4,e_3,-e_6,e_5,e_7)$ is an adapted basis for $\varphi$. But then the closure of the Hodge dual is clear as $(\alpha,\omega,\psi)$ was hypo. Moreover, by the hypo flow equations,
\begin{equation*}
\left(\star_{\varphi(t)}\varphi(t)\right)'=\omega'(t)\wedge \omega(t)+(\alpha(t)\wedge \rho(t))'=-d\alpha(t)\wedge \omega(t)+d\hat{\rho}(t)=-d\varphi(t),
\end{equation*}
i.e. $\varphi(t)$ solves the Hitchin flow as claimed.
\end{proof}
The last lemma has the following important easy consequence.
\begin{corollary}\label{co:holonomyinSU4}
Let $M$ be a real-analytic seven-dimensional manifold and $\varphi\in \Omega^3 M$ be a real-analytic cocalibrated $\G_2$-structure on $M$ which is induced by a real-analytic hypo $\SU(3)$-structure $(\alpha,\omega,\psi)\in \Omega^1 M\times \Omega^2 M\times \Omega^3 M$ in the sense that $\varphi=\omega\wedge \alpha-\hat{\rho}$. Then the Riemannian manifold obtained by the Hitchin flow with initial value $\varphi$ has holonomy contained in $\SU(4)$.
\end{corollary}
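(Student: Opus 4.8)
The plan is to combine Lemma \ref{le:hypotococalibrated} with Proposition \ref{pro:hypoflow}, using the real-analyticity hypothesis to guarantee that both flows actually have solutions on a common domain. First I would invoke the local existence and uniqueness result recalled in the remark after Proposition \ref{pro:hypoflow}: since $(\alpha,\omega,\psi)$ is a real-analytic hypo $\SU(3)$-structure on the real-analytic manifold $M$, there is an open neighbourhood $U$ of $M\times\{0\}$ in $M\times\bR$ and a smooth one-parameter family $t\mapsto(\alpha(t),\omega(t),\psi(t))$ of $\SU(3)$-structures, hypo at $t=0$ with $(\alpha(0),\omega(0),\psi(0))=(\alpha,\omega,\psi)$, solving the hypo flow equations \eqref{eq:hypoflowSU3}. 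Shrinking if necessary we may take $U=M\times I$ for an open interval $I\ni 0$ in the homogeneous or compact case, but for the general statement it suffices to work on such a $U$.

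Next I would apply Lemma \ref{le:hypotococalibrated} to this solution. The lemma tells us that $\varphi(t):=\omega(t)\wedge\alpha(t)-\hat\rho(t)$ is a solution of the Hitchin flow with $\varphi(0)=\omega\wedge\alpha-\hat\rho=\varphi$, that each $\varphi(t)$ is a cocalibrated $\G_2$-structure, and that the Riemannian metric induced on $M\times I$ by $t\mapsto\varphi(t)$ via \eqref{eq:parallelSpin7}, namely $g_{\varphi(t)}+dt^2$, coincides with the metric $g_{(\alpha(t),\omega(t),\psi(t))}+dt^2$ induced by the hypo flow via \eqref{eq:parallelSU4}. By uniqueness of the Hitchin flow for real-analytic initial data, this $t\mapsto\varphi(t)$ \emph{is} the Riemannian manifold obtained by the Hitchin flow with initial value $\varphi$ (up to restriction of the domain).

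Finally, Proposition \ref{pro:hypoflow} says precisely that the Calabi-Yau structure $(\Omega,\Psi)$ built from the hypo-flow solution via \eqref{eq:parallelSU4} induces the metric $g_{(\alpha(t),\omega(t),\psi(t))}+dt^2$ on $M\times I$ and that this metric has holonomy contained in $\SU(4)$. Since this is the same metric as the one obtained from the Hitchin flow with initial value $\varphi$, we conclude $Hol(g)\subseteq\SU(4)$, as claimed.

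The statement is essentially a formal assembly of the preceding results, so there is no serious obstacle; the only point requiring a little care is the appeal to \emph{uniqueness} (not just existence) of the Hitchin flow, so that the Hitchin-flow metric is genuinely identified with the metric coming from the hypo-flow solution rather than merely admitting a map to it — this is exactly why the real-analyticity hypothesis is imposed, and it is handled by the Cartan–Kähler-based local uniqueness cited for \eqref{eq:Hitchinflow}.
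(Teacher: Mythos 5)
Your proposal is correct and is precisely the argument the paper intends: the corollary is stated as an immediate consequence of Lemma \ref{le:hypotococalibrated} together with Proposition \ref{pro:hypoflow} and the existence/uniqueness remarks for real-analytic initial data, which is exactly the assembly you carry out. Your extra care about invoking uniqueness of the Hitchin flow to identify the two metrics is the one point worth making explicit, and you handle it correctly.
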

In the following sections, we will often concentrate on left-invariant structures on Lie groups $\G$, which we will identify with the corresponding structures on the associated Lie algebra $\mfg$. So we are able to speak about \emph{cocalibrated $\G_2$-structures}, \emph{hypo $\SU(3)$-structures}, etc., \emph{on a Lie algebra $\mfg$}. Moreover, we will consider the Hitchin/hypo flow on a Lie algebra $\mfg$ by which we mean the corresponding flow equation on the associated \emph{simply-connected} Lie group $\tilde{G}$ with left-invariant initial value. Note that then the solution of the hypo/Hitchin flow stays left-invariant and we can, in fact, consider the flow as a flow on $\mfg$.
\section{Hypo $\SU(3)$-structures}\label{sec:hypoSU3}
\subsection{Intrinsic torsion of $\SU(3)$-structures}\label{subsec:inttors}
In this section, we compute the intrinsic torsion $\tau$ of $\SU(3)$-structures $(\alpha,\omega,\rho)\in \Omega^1 M\times \Omega^2 M\times \Omega^3(M,\bC)$.

Naturally, the intrinsic torsion $\tau$ is a section of the vector bundle $T^*M\otimes \mathfrak{su}(3)^{\perp}(P)$ of rank $91$, where $P$ is the $\SU(3)$-reduction of the frame bundle associated to the $\SU(3)$-structure $(\alpha,\omega,\rho)$ and $\mathfrak{su}(3)^{\perp}(P)$ is the vector bundle associated to the natural $\SU(3)$-representation on the orthogonal complement $\mathfrak{su}(3)^{\perp}$ of $\mathfrak{su}(3)$ in $\mathfrak{so}(7)$ with respect to the Killing form of $\mathfrak{so}(7)$. Conti \cite{C} showed that the intrinsic torsion is fully determined by the differentials $(d\alpha,d\omega,d\psi)$. As $\SU(3)$-modules, we have $\bR^{7}=\cD_{\alpha}\oplus \cD_{\omega}=\bR^6\oplus \bR$. Now the well-known representation theory of $\SU(3)$ gives us
\begin{equation*}
\begin{split}
\Lambda^2 (\bR^7)^*=&\alpha^0\wedge (\bR^6)^*\oplus \bR\cdot \omega^0\oplus [\Lambda^{1,1}_0 (\bR^6)^*]\oplus [[\Lambda^{2,0} (\bR^6)^*]]\\
\Lambda^3 (\bR^7)^*=& \bR\cdot \alpha^0\wedge \omega^0\oplus \alpha^0\wedge  [\Lambda^{1,1}_0 (\bR^6)^*]\oplus \alpha^0\wedge [[\Lambda^{2,0} (\bR^6)^*]]\oplus \bR \cdot \rho^0\oplus \bR  \cdot \hat{\rho}^0\\
&\oplus \left[\left[\Lambda^{2,1}_0 \left(\bR^6\right)^* \right]\right]\oplus \omega^0\wedge \left(\bR^6\right)^*\\
\Lambda^4 (\bR^7)^*=&\bR \cdot \alpha^0 \wedge \rho^0\oplus \bR  \cdot \alpha^0\wedge \hat{\rho}^0\oplus \alpha^0\wedge\left[\left[\Lambda^{2,1}_0 \left(\bR^6\right)^* \right]\right]\oplus \alpha^0\wedge \omega^0\wedge \left(\bR^6\right)^*\\
&\oplus \bR  \cdot \left(\omega^0\right)^2\oplus \left[\Lambda^{1,1}_0 \left(\bR^6\right)^* \right]\wedge \omega^0 \oplus \left(\bR^6\right)^*\wedge \rho^0.
\end{split}
\end{equation*}
as $\SU(3)$-modules. These decompositions induce decomposition of the associated vector bundles and we can decompose $d\alpha,\, d\omega,\,d\rho$ and $d\hat{\rho}$ accordingly. 
We have the equation $\omega\wedge \rho=0$ which gives us $d\omega\wedge \rho=-\omega\wedge d\rho$ and so relations between the different components. 
Similarly, we have $d\omega\wedge \hat\rho=-\omega\wedge d\hat\rho$. 
Moreover, $d\psi$ has to lie in the subbundle associated to the $\SU(3)$-module $\alpha\wedge \left(\Lambda^{3,0}\oplus \Lambda^{2,1}\right)\oplus \Lambda^{3,1}\oplus \Lambda^{2,2}$.
Furthermore, $\hat{\rho}\wedge \rho=\frac{2}{3}\omega^3$ gives us $d\hat{\rho}\wedge \rho-\hat{\rho}\wedge d\rho=2d\omega\wedge \omega^2$.
Together, these equations, the fact that $\bR^6\ni X\mapsto X\hook \rho\in \Lambda^{2,0} \left(\bR^6\right)^*$ is an $\SU(3)$-module isomorphism, the identities
$(\beta^{\sharp}\hook \rho^0)\wedge \rho^0=(\omega^0)^2\wedge \beta=-(J(\beta^{\sharp})\hook\rho^0) \wedge \hat{\rho}^0$ for all $\beta\in \left(\bR^6\right)^*$, which only have to be checked for one particular
$\beta\in \left(\bR^6\right)^*$ using Schur's Lemma, and straightforward computations yield the following proposition. Note that parts of the computations
are also done in \cite[Chapter 3, Proposition 3.4]{SH}.
\begin{proposition}\label{pro:inttorsall}
Let $(\alpha,\omega,\psi)\in \Omega^1 M\times \Omega^2 M\times \Omega^3 (M,\bC)$ be an $\SU(3)$-structure on a seven-dimensional manifold $M$. Then
\begin{equation*}
\begin{split}
d\alpha=&\,\alpha\wedge \beta_1+\mu_1 \omega+\tilde{\omega}_1+\beta_2^{\sharp}\hook \rho,\\
d\omega=&\, \frac{3}{2} w_1^- \rho-\frac{3}{2}w_1^+ \hat{\rho}+w_3+w_4\wedge \omega+\frac{2}{3}\mu_2 \alpha\wedge \omega-\alpha\wedge (\beta_3^{\sharp}\hook \rho)+\alpha\wedge \tilde{\omega}_2,\\
d\rho=&\, w_1^+ \omega^2+w_2^+\wedge \omega+w_5\wedge \rho+\mu_2 \alpha\wedge \rho-\mu_3 \alpha\wedge \hat{\rho}+\alpha\wedge \gamma+\alpha\wedge \beta_3\wedge \omega,\\
d\hat{\rho}=&\, w_1^- \omega^2+w_2^-\wedge \omega+w_5\wedge \hat{\rho}+\mu_3 \alpha\wedge \rho+\mu_2 \alpha\wedge \hat{\rho}-\alpha\wedge J^*\gamma-\alpha\wedge J^*\beta_3\wedge \omega
\end{split}
\end{equation*}
with $\mu_1,\,\mu_2,\,\mu_3,\,w_1^+,\,w_1^-\in \Omega^0 M$, $\beta_1,\,\beta_2,\,\beta_3,\,w_4,\,w_5\in [[\Omega^{1,0} M]]$ (i.e. real one-forms which annihilate $\cD_{\omega}$), $\tilde{\omega}_1,\tilde{\omega}_2,w_2^+,w_2^-\in [\Omega^{1,1}_0 M]$ and $\gamma,w_3\in [[\Omega^{2,1}_0 M]]$ and where $J=J_{(\alpha,\psi)}^*$ is the almost complex structure induced on $\cD_{\alpha}$. These forms encode the intrinsic torsion of $(\alpha,\omega,\psi)$ which lies in the subbundle associated to the $\SU(3)$-module $\left(\bR^7\right)^*\otimes \so(7)/\mathfrak{su}(3)=5\bR\oplus 5\bR^6\oplus 4[\Lambda^{1,1}_0]\oplus 2 [[\Lambda^{2,1}_0]]$.
\end{proposition}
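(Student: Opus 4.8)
The plan is to derive the four decompositions of $d\alpha,\,d\omega,\,d\rho,\,d\hat\rho$ by pure representation theory, then use the algebraic identities coming from $\omega\wedge\psi=0$ and $\rho\wedge\hat\rho=\tfrac23\omega^3$ (plus the constraint on $d\psi$) to identify which of the a priori independent components actually coincide. First I would fix the model forms $(\alpha^0,\omega^0,\psi^0)$ on $\bR^7=\bR^6\oplus\bR$ and recall that the intrinsic torsion of an $\SU(3)$-structure in dimension seven lives in $(\bR^7)^*\otimes(\so(7)/\mathfrak{su}(3))$, and that by Conti's result it is faithfully encoded in $(d\alpha,d\omega,d\psi)$ — equivalently in the components of $d\alpha\in\Omega^2M$, $d\omega\in\Omega^3M$, and $d\rho,d\hat\rho\in\Omega^4M$ that land in the subbundles containing the image of the intrinsic torsion map. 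So the first genuine step is to write down, using the standard $\SU(3)$-decompositions of $\Lambda^k(\bR^6)^*$ and the given decompositions of $\Lambda^2(\bR^7)^*$, $\Lambda^3(\bR^7)^*$ and $\Lambda^4(\bR^7)^*$ displayed just above, the most general form of each of $d\alpha,d\omega,d\rho,d\hat\rho$ with independent coefficient functions and forms in the appropriate $\SU(3)$-types $\bR,\,[[\Omega^{1,0}]],\,[\Omega^{1,1}_0],\,[[\Omega^{2,1}_0]]$.

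Next I would impose the relations. Differentiating $\omega\wedge\rho=0$ gives $d\omega\wedge\rho=-\omega\wedge d\rho$, and similarly $d\omega\wedge\hat\rho=-\omega\wedge d\hat\rho$ from $\omega\wedge\hat\rho=0$; differentiating $\rho\wedge\hat\rho=\tfrac23\omega^3$ gives $d\hat\rho\wedge\rho-\hat\rho\wedge d\rho=2\,d\omega\wedge\omega^2$. Each of these is an identity between $\SU(3)$-equivariant maps, so it can be evaluated type-by-type; because the relevant $\SU(3)$-modules appear with multiplicity one in the target, Schur's Lemma reduces each type-component to a single scalar identity, which in turn I would pin down by testing on one convenient element $\beta\in(\bR^6)^*$, using the algebraic facts $(\beta^\sharp\hook\rho^0)\wedge\rho^0=(\omega^0)^2\wedge\beta=-(J(\beta^\sharp)\hook\rho^0)\wedge\hat\rho^0$ and the $\SU(3)$-isomorphism $\bR^6\ni X\mapsto X\hook\rho\in\Lambda^{2,0}(\bR^6)^*$. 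The effect of these computations is precisely to force: the $\mu_2$ appearing in $d\omega$, $d\rho$ and $d\hat\rho$ to be one and the same function (up to the displayed factor $\tfrac23$), the $w_1^\pm$ to be shared between $d\omega$ and $d\rho,d\hat\rho$, the $w_5$-type one-form to be common to $d\rho$ and $d\hat\rho$, and the $\gamma$ (resp.\ $\beta_3$) in $d\rho$ to reappear as $-J^*\gamma$ (resp.\ $-J^*\beta_3$) in $d\hat\rho$. The additional input that $d\psi=d\rho+i\,d\hat\rho$ must be of type $\alpha\wedge(\Lambda^{3,0}\oplus\Lambda^{2,1})\oplus\Lambda^{3,1}\oplus\Lambda^{2,2}$ kills the components of $d\rho,d\hat\rho$ that would otherwise be independent, which is what couples the coefficients of $\rho,\hat\rho$ and of the $(2,1)$-parts across the real and imaginary parts as written.

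Finally, with all the shared-coefficient identifications in hand, I would collect terms and read off the stated normal form, and then check the bookkeeping: count the free parameters — $5\bR$ from $\{\mu_1,\mu_2,\mu_3,w_1^+,w_1^-\}$, $5\bR^6$ from $\{\beta_1,\beta_2,\beta_3,w_4,w_5\}$, $4[\Lambda^{1,1}_0]$ from $\{\tilde\omega_1,\tilde\omega_2,w_2^+,w_2^-\}$, and $2[[\Lambda^{2,1}_0]]$ from $\{\gamma,w_3\}$ — and verify that this matches $\dim\bigl((\bR^7)^*\otimes\so(7)/\mathfrak{su}(3)\bigr)$, i.e.\ that $5\cdot1+5\cdot6+4\cdot8+2\cdot16=5+30+32+32=99$; here one must be a little careful because some of these forms are not completely free (the $[\Lambda^{1,1}_0]$ and $[[\Lambda^{2,1}_0]]$ spaces have real dimension $8$ and $16$ over $\bR^6$) — the dimension of the intrinsic-torsion bundle is $91$, not $99$, so I would recheck against the quoted rank $91$ and locate the extra constraints (these come precisely from the relations above, which are not all independent from the module-theoretic splitting). \emph{The main obstacle} is this last reconciliation: carrying out the Schur-Lemma scalar identifications is mechanical once the right test element $\beta$ is chosen, but tracking exactly how $\omega\wedge\rho=0$, its conjugate, the $\rho\wedge\hat\rho$ identity and the type constraint on $d\psi$ overlap — so that one neither over- nor under-counts the surviving parameters and arrives at a torsion space of the correct rank $91$ — requires care, and it is where the bulk of the ``straightforward computations'' referenced in the statement actually lies.
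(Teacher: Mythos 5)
Your strategy is exactly the one the paper uses: decompose each of $d\alpha,d\omega,d\rho,d\hat\rho$ according to the displayed $\SU(3)$-module decompositions of $\Lambda^k(\bR^7)^*$, then identify shared components via the differentiated identities $d\omega\wedge\rho=-\omega\wedge d\rho$, $d\omega\wedge\hat\rho=-\omega\wedge d\hat\rho$, $d\hat\rho\wedge\rho-\hat\rho\wedge d\rho=2\,d\omega\wedge\omega^2$, the type constraint on $d\psi$, the isomorphism $X\mapsto X\hook\rho$, and Schur's Lemma applied to a single test element. That part of the proposal is sound and matches the paper's argument.

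However, the ``main obstacle'' you single out at the end is an artifact of a dimension miscount, and the resolution you propose would lead you astray. The space $\Lambda^{2,1}_0(\bC^3)^*$ of primitive $(2,1)$-forms has complex dimension $9-3=6$ (the Lefschetz map $\Lambda^{1,0}\to\Lambda^{2,1}$ is injective with $3$-dimensional image), so the real module $[[\Lambda^{2,1}_0]]$ has real dimension $12$, not $16$. The parameter count is therefore $5\cdot 1+5\cdot 6+4\cdot 8+2\cdot 12=5+30+32+24=91$, which agrees exactly with $\dim\bigl((\bR^7)^*\otimes\so(7)/\mathfrak{su}(3)\bigr)=7\cdot 13=91$. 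There is no discrepancy of $99$ versus $91$ and hence no hidden ``extra constraints'' to locate among $\mu_1,\dots,w_3$: after the identifications forced by the algebraic relations, the listed forms are genuinely free and parametrize the full torsion module. If you go looking for further relations among them you will either find none or, worse, impose spurious ones; the correct final check is simply the count above.
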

Next, we compute the intrinsic torsion of a \emph{hypo} $\SU(3)$-structure. Thereto, we note that $d\omega=0$ if and only if $w_1^-=0$, $w_1^+=0$, $w_3=0$, $w_4=0$, $\mu_2=0$, $\beta_3=0$ and $\tilde{\omega}_2=0$. If this is the case, we have
\begin{equation*}
d(\alpha\wedge \rho)=d\alpha\wedge \rho-\alpha\wedge d\rho=\alpha\wedge \beta_1\wedge \rho+\left(\beta_2^{\sharp}\hook \rho\right)\wedge \rho-\alpha\wedge w_2^+\wedge \omega-\alpha\wedge w_5\wedge \rho
\end{equation*}
and so $d(\alpha\wedge \rho)=0$ if and only if $w_5=\beta_1$, $\beta_2=0$ and $w_2^+=0$. Consequently, if $d\omega=0$ and $d(\alpha\wedge \rho)=0$, then $d(\alpha\wedge \hat{\rho})=-\alpha\wedge w_2^-\wedge \omega$ and so also $d(\alpha\wedge \hat{\rho})=0$ if and only if $w_2^-=0$. Altogether, we obtain:
\begin{proposition}\label{pro:inttorshypo}
Let $(\alpha,\omega,\psi)\in \Omega^1 M\times \Omega^2 M\times \Omega^3 (M,\bC)$ be a hypo $\SU(3)$-structure on a seven-dimensional manifold $M$. Then
\begin{equation}\label{eq:extderhypo}
\begin{split}
d\alpha=&\,\alpha\wedge \beta+\lambda_1 \omega+\tilde{\omega},\\
d\rho=&\, \beta\wedge \rho-\lambda_2 \alpha\wedge \hat{\rho}+\alpha\wedge \gamma,\\
d\hat{\rho}=&\, \beta \wedge \hat{\rho}+\lambda_2 \alpha\wedge \rho-\alpha\wedge J_{(\alpha,\psi)}^*\gamma
\end{split}
\end{equation}
with $\lambda_1,\,\lambda_2\in \Omega^0 M$, $\beta\in [[\Omega^{1,0} M]]$, $\tilde{\omega}\in [\Omega^{1,1}_0 M]$ and $\gamma\in [[\Omega^{2,1}_0 M]]$. So hypo $\SU(3)$-structure are those $\SU(3)$-structures whose intrinsic torsion lies in the $\SU(3)$-submodule
$2V_1\oplus V_6\oplus V_8\oplus V_{12}:=2\bR\oplus \bR^6\oplus [\Lambda^{1,1}_0  \left(\bR^6\right)^*]\oplus [[\Lambda^{2,1}_0 \left(\bR^6\right)^*]]\subseteq \left(\bR^7\right)^*\otimes \so(7)/\mathfrak{su}(3)$
\end{proposition}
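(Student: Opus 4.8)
The plan is to start from the general intrinsic torsion formulas of Proposition \ref{pro:inttorsall} and impose the two hypo conditions $d\omega = 0$ and $d(\alpha\wedge\psi) = 0$ one at a time, tracking which torsion components are forced to vanish or to coincide. First I would analyze $d\omega = 0$: since the seven summands in the expression for $d\omega$ land in pairwise-distinct $\SU(3)$-submodules of $\Lambda^3(\bR^7)^*$ (the scalars $w_1^\pm$, the primitive $(1,1)$-form $\tilde\omega_2$, the $(2,1)$-piece $w_3$, the one-forms $w_4$ and $\beta_3$, and the scalar $\mu_2$ all sit in different irreducible components), closedness of $\omega$ is equivalent to $w_1^+ = w_1^- = 0$, $w_3 = 0$, $w_4 = 0$, $\mu_2 = 0$, $\beta_3 = 0$, $\tilde\omega_2 = 0$. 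This is exactly the bookkeeping already sketched in the paragraph preceding the proposition, so I would just make sure each term really does live in an independent module (invoking the decomposition of $\Lambda^3(\bR^7)^*$ displayed above and Schur's lemma where two scalars might a priori mix).

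Next I would compute $d(\alpha\wedge\rho) = d\alpha\wedge\rho - \alpha\wedge d\rho$ using the surviving terms. The computation in the excerpt gives
\begin{equation*}
d(\alpha\wedge\rho) = \alpha\wedge\beta_1\wedge\rho + (\beta_2^\sharp\hook\rho)\wedge\rho - \alpha\wedge w_2^+\wedge\omega - \alpha\wedge w_5\wedge\rho,
\end{equation*}
where I would use the identity $(\beta_2^\sharp\hook\rho)\wedge\rho = (\omega)^2\wedge\beta_2$ recalled before the proposition to rewrite the second term, and note that the terms $\alpha\wedge(\ldots)\wedge\rho$ with one-form coefficient, $\alpha\wedge(\text{primitive }(1,1))\wedge\omega$, and $\omega^2\wedge\beta_2$ again land in distinct $\SU(3)$-components of $\Lambda^4(\bR^7)^*$ (using the displayed decomposition of $\Lambda^4$). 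Hence $d(\alpha\wedge\rho) = 0$ forces $\beta_2 = 0$, $w_2^+ = 0$, and $w_5 = \beta_1$. Feeding this back in and computing $d(\alpha\wedge\hat\rho) = -\alpha\wedge w_2^-\wedge\omega$ shows that the remaining half of the hypo condition is automatic once $w_2^- = 0$ — but one must check whether $w_2^-$ is actually forced by $d(\alpha\wedge\psi) = 0$ or whether it should be listed; since $d(\alpha\wedge\psi) = d(\alpha\wedge\rho) + i\,d(\alpha\wedge\hat\rho)$ and the real and imaginary parts vanish separately, $w_2^- = 0$ too. Then renaming $\beta := \beta_1 = w_5$, $\lambda_1 := \mu_1$, $\lambda_2 := \mu_3$, $\tilde\omega := \tilde\omega_1$, and $\gamma$ for the primitive $(2,1)$-piece, and plugging into the formulas of Proposition \ref{pro:inttorsall}, yields exactly \eqref{eq:extderhypo}. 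Finally, reading off which $\SU(3)$-modules the surviving components occupy gives the torsion module $2\bR\oplus\bR^6\oplus[\Lambda^{1,1}_0]\oplus[[\Lambda^{2,1}_0]]$, which I would match against the full module $5\bR\oplus 5\bR^6\oplus 4[\Lambda^{1,1}_0]\oplus 2[[\Lambda^{2,1}_0]]$ from Proposition \ref{pro:inttorsall}.

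The main obstacle is not any single computation but the representation-theoretic bookkeeping: I need to be certain that in each wedge-product expansion the resulting terms genuinely lie in mutually non-isomorphic (or at least linearly independent) $\SU(3)$-submodules, so that "the sum vanishes" really does imply "each summand vanishes." The delicate points are the two scalar components of $d(\alpha\wedge\rho)$ — the $\omega^2\wedge\beta$-type term versus the $\alpha\wedge w_2^+\wedge\omega$-type term both could conceivably produce proportional contributions — and the interplay with the constraint $d\psi \in \alpha\wedge(\Lambda^{3,0}\oplus\Lambda^{2,1})\oplus\Lambda^{3,1}\oplus\Lambda^{2,2}$, which already kills certain components of $d\rho$ and $d\hat\rho$ before the hypo conditions are even imposed. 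I would resolve this exactly as the excerpt indicates, by verifying the key identities on a single well-chosen $\beta\in(\bR^6)^*$ and applying Schur's lemma, rather than by brute-force index manipulation.
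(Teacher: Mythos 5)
Your proposal follows the paper's own derivation essentially verbatim: impose $d\omega=0$ to kill $w_1^{\pm},w_3,w_4,\mu_2,\beta_3,\tilde\omega_2$, then read off from $d(\alpha\wedge\rho)=0$ that $\beta_2=0$, $w_2^+=0$, $w_5=\beta_1$, and finally from $d(\alpha\wedge\hat\rho)=-\alpha\wedge w_2^-\wedge\omega$ that $w_2^-=0$, after which the renaming $\beta:=\beta_1=w_5$, $\lambda_1:=\mu_1$, $\lambda_2:=\mu_3$, $\tilde\omega:=\tilde\omega_1$ gives \eqref{eq:extderhypo}. The representation-theoretic independence of the summands that you flag as the main concern is exactly what the paper relies on (via the displayed decompositions of $\Lambda^3$ and $\Lambda^4$ and Schur's lemma), so the argument is correct and complete.
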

We use the decomposition of the intrinsic torsion of a hypo $\SU(3)$-structure to distinguish them into different classes and say that a hypo $\SU(3)$-structure is \emph{of class $2V_1\oplus V_6$} etc. if the intrinsic torsion lies pointwise in $2V_1\oplus V_6$ etc.
If we need to distinguish the two $V_1$-classes we denote them by $V_1(\lambda_1)$ and $V_1(\lambda_2)$.
\subsection{Different torsion classes}\label{subsec:torsionclasses}
In this subsection, we consider different torsion classes and assume throughout this section that $(\alpha,\omega,\psi)$ is a hypo $\SU(3)$-structure on a seven-dimensional manifold $M$.

\subsubsection{Torsion class $V_1(\lambda_1)\oplus V_6\oplus V_8$}
Consider the distribution $\cD_{\omega}$ and note that it is always integrable as it has rank one.
Hence, we may consider the leaf space $W:=M/\cD_{\omega}$, which we assume in the following to be smooth.
Then $\omega$ is a basic form and so can be pushed down to a form on $W$. However, $\psi$ is, in general, only semi-basic.
From Proposition \ref{pro:inttorshypo}, we see that it is basic if and only if it is of class $V_1(\lambda_1)\oplus V_6\oplus V_8$.
Moreover, the pushed-down forms then constitute a special almost Hermitian structure on $W$.
By Proposition \ref{pro:inttorshypo}, we have $d\psi=\beta\wedge \psi$ and so $0=d^2\psi=d\beta\wedge \psi$, which implies $\cD_{\omega}\hook d\beta=0$.
Hence, $\beta$ is also basic and by what we noted in Definition \ref{def:saHs}, the induced special almost Hermitian structure on $W$ is K\"ahler.
\begin{proposition}\label{pro:classlambda1V6V8sol}
Let $(\alpha,\omega,\psi)$ be a hypo $\SU(3)$-structure of class $V_1(\lambda_1)\oplus V_6\oplus V_8$ on a seven-dimensional manifold $M$
and assume that the space of leaves $W:=M/\cD_{\omega}$ is smooth. Then there exists a K\"ahler special almost Hermitian structure $(\Omega,\Psi)$ on $W$
with $\omega=\pi^* \Omega$ and $\psi=\pi^*\Psi$ for $\pi:M\rightarrow W$ being the canonical projection.
\end{proposition}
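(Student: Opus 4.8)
The plan is to descend the relevant forms along the projection $\pi\colon M\to W$ onto the leaf space of the rank-one (hence integrable) foliation determined by $\cD_\omega$; this is a manifold and $\pi$ a submersion with connected one-dimensional fibers by the smoothness hypothesis. I would use throughout the standard criterion that a form $\theta$ on $M$ equals $\pi^*\tilde\theta$ for a unique form $\tilde\theta$ on $W$ exactly when $\theta$ is basic, which, since $\cD_\omega=\bR X$ for $X$ the Reeb vector field, amounts to $X\hook\theta=0$ and $X\hook d\theta=0$.

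First I would descend $\omega$: by definition $X\hook\omega=0$, and $d\omega=0$ since $(\alpha,\omega,\psi)$ is hypo, so $\omega=\pi^*\Omega$ for a unique $\Omega\in\Omega^2 W$. Next I would descend $\psi$. Its model tensor $\psi^0=e^1_{\bC}\wedge e^2_{\bC}\wedge e^3_{\bC}$ does not involve $\alpha^0=e^7$, so $X\hook\psi=0$; and being of class $V_1(\lambda_1)\oplus V_6\oplus V_8$ means precisely that $\lambda_2=0$ and $\gamma=0$ in Proposition~\ref{pro:inttorshypo}, so that $d\psi=d\rho+i\,d\hat\rho=\beta\wedge\psi$ with $\beta\in[[\Omega^{1,0}M]]$. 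Since such $\beta$ annihilates $\cD_\omega$, we get $X\hook d\psi=(X\hook\beta)\wedge\psi-\beta\wedge(X\hook\psi)=0$, hence $\psi=\pi^*\Psi$ for a unique $\Psi\in\Omega^3(W,\bC)$.

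To identify $(\Omega,\Psi)$ as a special almost Hermitian structure on the six-manifold $W$, I would note that for any $p\in M$ the map $d\pi_p$ has kernel $(\cD_\omega)_p$ and, because $TM=\cD_\alpha\oplus\cD_\omega$, restricts to a linear isomorphism $(\cD_\alpha)_p\to T_{\pi(p)}W$ carrying $\bigl(\omega_p|_{\cD_\alpha},\psi_p|_{\cD_\alpha}\bigr)$ to $\bigl(\Omega_{\pi(p)},\Psi_{\pi(p)}\bigr)$. The former is a special almost Hermitian structure on $(\cD_\alpha)_p$ by the very definition of an $\SU(3)$-structure, so $\bigl(\Omega_{\pi(p)},\Psi_{\pi(p)}\bigr)$ has the required model tensors and satisfies the positivity of Remark~\ref{re:stable}; letting $p$ range over $M$, this holds at every point of $W$.

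It remains to check the Kähler condition of Definition~\ref{def:saHs}. As $\pi$ is a surjective submersion, $\pi^*$ is injective on forms, so $\pi^*(d\Omega)=d\omega=0$ gives $d\Omega=0$. For $\Psi$: since $\psi$ is basic, so is $d\psi=\beta\wedge\psi$, whence $0=X\hook d(\beta\wedge\psi)=X\hook(d\beta\wedge\psi)=(X\hook d\beta)\wedge\psi$; in an adapted coframe, $\eta\wedge\psi^0=0$ for a complex one-form $\eta$ forces $\eta$ into the complex span of $e^1_{\bC},e^2_{\bC},e^3_{\bC}$, which contains no nonzero real covector, so $X\hook d\beta=0$. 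Together with $X\hook\beta=0$ this makes $\beta$ basic, $\beta=\pi^*\beta_W$ with $\beta_W$ a real one-form on $W$, and then $\pi^*(d\Psi)=d\psi=\beta\wedge\psi=\pi^*(\beta_W\wedge\Psi)$ yields $d\Psi=\beta_W\wedge\Psi$, so $(\Omega,\Psi)$ is Kähler. The only step that is not pure bookkeeping with the descent criterion and the torsion decomposition of Proposition~\ref{pro:inttorshypo} is this last one — showing that $\beta$ itself descends, which rests on $d^2\psi=0$ together with the injectivity of $\,\cdot\wedge\psi$ on real semi-basic one-forms.
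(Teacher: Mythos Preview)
Your argument is correct and follows essentially the same route as the paper: descend $\omega$ and $\psi$ using the basic-form criterion (the torsion class condition being exactly what makes $\psi$ basic via $d\psi=\beta\wedge\psi$), then use $d^2\psi=0$ to obtain $d\beta\wedge\psi=0$ and hence $X\hook d\beta=0$, so that $\beta$ descends and $(\Omega,\Psi)$ is K\"ahler. The only difference is that you spell out explicitly why $(X\hook d\beta)\wedge\psi=0$ forces $X\hook d\beta=0$, whereas the paper states this implication without further comment.
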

Next, we like to invert this construction, i.e. we want start with data on $W$ and construct $M$ and the hypo $\SU(3)$-structure of class
$V_1(\lambda_1)\oplus V_6\oplus V_8$ on $M$ from the data on $W$. Thereto, note that by Proposition
\ref{pro:inttorshypo} $d\alpha=\alpha\wedge \beta+\tau$ for some semi-basic $\tau\in \Omega^2 M$. As $\tau$ should come from data on $W$, it should be basic.
Hence, $0=d^2\alpha=\tau\wedge \beta-\alpha\wedge d\beta+d\tau$ and $\cD_{\omega}\hook d\beta=0$ imply $d\tau=-\tau\wedge \beta$ and $d\beta=0$.

So assume now that we have a six-dimensional manifold endowed with a special almost Hermitian structure $(\Omega,\Psi)$ with $d\Psi=\beta\wedge \Psi$ such that $d\beta=0$ and a real $(1,1)$-form $\tau\in \Omega^2 M$ with $d\tau=-\tau\wedge \beta$. We additionally assume that $\beta$ is not only closed but exact and let $f\in C^{\infty}(W)$ be such that $df=\beta$. Set $\tilde\tau:=e^f\tau$ and note that $d\tilde\tau=0$. If $\tilde\tau\in \Omega^2 W$ has integral periods, we may build a principal $S^1$-bundle $\pi:M\rightarrow W$ with principal $S^1$-connection $\theta\in\Omega^1 M$ such that $d\theta=\pi^*\tilde\tau$. Setting $\alpha:=e^{-\pi^*f} \theta$, we get $d\alpha=\alpha\wedge \pi^*\beta+\pi^*\tau$. So we have obtained
\begin{proposition}\label{pro:classlambda1V6V8sol2}
Let $(W,\Omega,\Psi)$ be six-dimensional K\"ahler special almost Hermitian manifold endowed with a real $(1,1)$-form $\tau$ such that $d\tau=-\tau\wedge \beta$ 
for the unique one-form $\beta$ with $d\Psi=\beta\wedge \Psi$. Assume further that $\beta=df$ for some $f\in C^{\infty}(W)$ and that $e^f \tau$ has integral periods. Then there exists a principal $S^1$-bundle $\pi^*:M\rightarrow W$ and a
semi-basic one-form $\alpha\in \Omega^1 M$ such that $(\alpha,\pi^*\Omega,\pi^*\Psi)$ is a hypo $\SU(3)$-structure of class $V_1(\lambda_1)\oplus V_6\oplus V_8$ on $M$.
\end{proposition}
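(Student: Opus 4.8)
The plan is to run the construction sketched just above in reverse: to realise $M$ as a principal $S^1$-bundle over $W$ whose curvature encodes $\tau$, and then to rescale the connection one-form so that the resulting $\alpha$ has exactly the exterior derivative that Proposition \ref{pro:inttorshypo} forces on a hypo $\SU(3)$-structure of class $V_1(\lambda_1)\oplus V_6\oplus V_8$.

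First I would set $\tilde\tau:=e^f\tau$ and check that it is closed: since $df=\beta$, $d\tau=-\tau\wedge\beta$, and a one-form commutes with a two-form under the wedge product, one gets $d\tilde\tau=e^f(\beta\wedge\tau+d\tau)=0$. As $\tilde\tau$ has integral periods by hypothesis, Weil's theorem provides a principal $S^1$-bundle $\pi\colon M\to W$ together with a principal connection one-form $\theta\in\Omega^1 M$ with $d\theta=\pi^*\tilde\tau$. Putting $\alpha:=e^{-\pi^*f}\theta$ and differentiating yields $d\alpha=-e^{-\pi^*f}\pi^*\beta\wedge\theta+e^{-\pi^*f}\pi^*\tilde\tau=\alpha\wedge\pi^*\beta+\pi^*\tau$, where we used $e^{-\pi^*f}\pi^*\tilde\tau=e^{-\pi^*f}\pi^*(e^f\tau)=\pi^*\tau$.

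Next I would verify that $(\alpha,\pi^*\Omega,\pi^*\Psi)$ is genuinely an $\SU(3)$-structure and that it is hypo. For the first point, $\alpha$ is a positive multiple of a connection form, hence nowhere vanishing, and $\ker\alpha$ is the horizontal distribution, which $\pi_*$ identifies with $TW$ in such a way that $(\pi^*\Omega,\pi^*\Psi)$ corresponds to the special almost Hermitian structure $(\Omega,\Psi)$ on $W$; lifting an adapted basis on $W$ horizontally and adjoining a suitably normalised fundamental vertical field gives an adapted basis for $(\alpha,\pi^*\Omega,\pi^*\Psi)$, so the triple has the required model tensor, and $J_{(\alpha,\psi)}$ restricted to the horizontal distribution is $\pi_*$-related to $J_\Psi$. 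The hypo equations then follow formally: $d(\pi^*\Omega)=\pi^*d\Omega=0$ since $(\Omega,\Psi)$ is K\"ahler, and, using the formula for $d\alpha$ together with $d\Psi=\beta\wedge\Psi$,
\[
d(\alpha\wedge\pi^*\Psi)=(\alpha\wedge\pi^*\beta+\pi^*\tau)\wedge\pi^*\Psi-\alpha\wedge\pi^*(\beta\wedge\Psi)=\pi^*(\tau\wedge\Psi)=0,
\]
the last equality because $\tau$ is of type $(1,1)$, $\Psi$ of type $(3,0)$, and a complex threefold admits no $(4,1)$-forms.

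Finally, to pin down the torsion class I would compare $d\alpha=\alpha\wedge\pi^*\beta+\pi^*\tau$ and $d(\pi^*\Psi)=\pi^*\beta\wedge\pi^*\Psi$ with the normal form of Proposition \ref{pro:inttorshypo}. The second equation forces $\lambda_2=0$ and $\gamma=0$, killing the $V_1(\lambda_2)$ and $V_{12}$ components; in the first, the term $\pi^*\beta$ supplies the $V_6$ component, while $\pi^*\tau$, being a real $(1,1)$-form for $J_{(\alpha,\psi)}$, contributes only its trace part ($\lambda_1\omega$, the $V_1(\lambda_1)$ component) and its primitive part ($\tilde\omega$, the $V_8$ component), with no $\beta_2^\sharp\hook\rho$-type term. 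Hence the intrinsic torsion lies pointwise in $V_1(\lambda_1)\oplus V_6\oplus V_8$. The only ingredient that is not pure bookkeeping is the existence of the circle bundle with prescribed curvature, which is precisely where the ``integral periods'' hypothesis is used; everything else is a matter of tracking form types and the basic/semi-basic distinction, so I anticipate no genuine obstacle.
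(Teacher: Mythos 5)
Your proposal is correct and follows essentially the same route as the paper: set $\tilde\tau=e^f\tau$, check $d\tilde\tau=0$, build the circle bundle with connection $\theta$ satisfying $d\theta=\pi^*\tilde\tau$, and rescale to $\alpha=e^{-\pi^*f}\theta$ so that $d\alpha=\alpha\wedge\pi^*\beta+\pi^*\tau$. The extra details you supply (the hypo verification via $\tau\wedge\Psi=0$ for type reasons and the identification of the torsion class from Proposition \ref{pro:inttorshypo}) are exactly the checks the paper leaves implicit, carried out the same way as in its proof of Theorem \ref{th:Domegaideal}.
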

\begin{remark}
A special case of Proposition \ref{pro:classlambda1V6V8sol2} is when $(W,\Omega,\Psi)$ is Calabi-Yau and $\Omega$ has integral periods. Taking then $\tau=\Omega$
gives a hypo $\SU(3)$-structure of class $V_1(\lambda_1)$ on the Boothby-Wang fibration over $W$.
\end{remark}
We are ultimately interested in left-invariant hypo $\SU(3)$-structures $(\alpha,\omega,\psi)$ on Lie groups $G$. As the space of leaves $G/\cD_{\omega}$ should be again a Lie group,
we need that $\cD_{\omega}$ is an ideal in the associated Lie algebra $\mfg$. Note that then $(\alpha,\omega,\psi)$ is automatically of class $V_1(\lambda_1)\oplus V_6\oplus V_8$.
However, not all hypo $\SU(3)$-structures of class $V_1(\lambda_1)\oplus V_6\oplus V_8$ have $\cD_{\omega}$ as an ideal.

Take, e.g., the seven-dimensional Lie algebra $\mfg$
defined by the differentials of a dual basis $\left(e^1,\ldots,e^7\right)$ by $(e^{27},-e^{17},-e^{47},e^{37},0,0,0)$ and the $\SU(3)$-structure
$(\alpha,\omega,\psi):=\left(e^7,e^{12}+e^{34}+e^{56},e^1_{\bC}\wedge e^2_{\bC}\wedge e^3_{\bC}\right)$ on $\mfg$ and note that $(\alpha,\omega,\psi)$
is a hypo $\SU(3)$-structure of class $V_1(\lambda_1)\oplus V_6\oplus V_8$ (even of class $\{0\}$, i.e. parallel) but $\cD_{\omega}=\spa{e_7}$ is not an ideal.

So we are considering now a stricter class in the left-invariant situation then in the general situation but are so able to obtain a one-to-one correspondence between six- and seven-dimensional data.
\begin{thm}\label{th:Domegaideal}
There exists a one-to-one correspondence between the following data:
\begin{itemize}
\item[(i)]
Isomorphism classes of seven-dimensional Lie algebras $\mfg$ endowed with a hypo $\SU(3)$-structure $(\alpha,\omega,\psi)$ such that $\cD_{\omega}$ is an ideal.
\item[(ii)]
Isomorphism classes of six-dimensional K\"ahler Lie algebras $(\mfh,\Omega,J)$ endowed with a $(3,0)$-form $\Psi$ and a real $(1,1)$-form $\tau$ such that $\Psi\wedge \overline{\Psi}=\frac{4i}{3}\Omega^3$ and such that $d\beta=0$ and $d\tau=-\tau\wedge \beta$ for the unique real one-form $\beta\in \mfh^*$ fulfilling $d\Psi=\beta\wedge \Psi$
\end{itemize}
\end{thm}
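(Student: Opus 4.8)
The plan is to construct the two maps of the correspondence explicitly and check they are mutually inverse up to isomorphism. Going from (i) to (ii): given a seven-dimensional Lie algebra $\mfg$ with a hypo $\SU(3)$-structure $(\alpha,\omega,\psi)$ for which $\cD_\omega = \spa{X}$ (the span of the Reeb vector field) is an ideal, set $\mfh := \mfg/\cD_\omega$. Since $\cD_\omega$ is an ideal, $\mfh$ is a Lie algebra and $\cD_\alpha$ is a complement to it as a vector space; because $(\alpha,\omega,\psi)$ has $\cD_\omega$ an ideal it is of class $V_1(\lambda_1)\oplus V_6\oplus V_8$ by the discussion preceding Proposition~\ref{pro:classlambda1V6V8sol}, so $\omega$ and $\psi$ are basic and push down to forms $\Omega,\Psi$ on $\mfh$; the restriction of $(\omega,\psi)$ to $\cD_\alpha$ being a special almost Hermitian structure makes $(\Omega,\Psi)$ one on $\mfh$, hence we get $\Omega$, $J := J_\psi$ and $\Psi$ a $(3,0)$-form with $\Psi\wedge\overline\Psi = \tfrac{4i}{3}\Omega^3$ (the normalization \eqref{eq:normalization} for $n=3$ rewritten). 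The equation $d\psi = \beta\wedge\psi$ from Proposition~\ref{pro:inttorshypo} with $\beta\in[[\Omega^{1,0}M]]$ basic (as shown, $\cD_\omega\hook d\beta = 0$) pushes down to $d\Psi = \beta\wedge\Psi$; integrability of $J_\psi$ then follows exactly as in Definition~\ref{def:saHs}, so $(\mfh,\Omega,J)$ is a Kähler Lie algebra. Finally write $d\alpha = \alpha\wedge\beta + \tau$ with $\tau\in\Omega^2 M$ semi-basic and of type $(1,1)$ (this is $\lambda_1\omega + \tilde\omega$); the computation $0 = d^2\alpha$ recorded in the paragraph before Proposition~\ref{pro:classlambda1V6V8sol2} gives $d\beta = 0$ and $d\tau = -\tau\wedge\beta$, and since everything is basic, $\tau$ descends to the desired real $(1,1)$-form on $\mfh$.

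For the reverse direction, given $(\mfh,\Omega,J,\Psi,\tau)$ as in (ii), define $\mfg := \mfh \ltimes_? \bR$ — more precisely, take $\mfg = \mfh \oplus \bR X$ as a vector space and specify the Lie bracket by dualizing: on $\mfh^* \subseteq \mfg^*$ the differential is the one of $\mfh$ plus the correction $d_\mfg\xi = d_\mfh\xi - (\beta\wedge\cdot \text{ terms})$ encoded so that the extra generator $\alpha$ (dual to $X$) satisfies $d\alpha = \alpha\wedge\beta + \tau$. Concretely, set $d\alpha := \alpha\wedge\beta + \tau$ and for $\xi\in\mfh^*$ keep $d\xi$ as in $\mfh$; the Jacobi identity for $\mfg$ is equivalent to $d_\mfg^2 = 0$ on generators, which on $\mfh^*$ holds because $d_\mfh^2=0$ and on $\alpha$ reduces precisely to $\tau\wedge\beta - \alpha\wedge d\beta + d\tau = 0$, i.e. to $d\beta=0$ and $d\tau = -\tau\wedge\beta$, our hypotheses. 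Pull $\Omega,\Psi,\tau$ back to $\mfg$ and set $\omega := \Omega$, $\psi := \Psi$, keeping $\alpha$; one checks $(\alpha,\omega,\psi)$ has the right model tensor (the normalization condition $\Psi\wedge\overline\Psi = \tfrac{4i}{3}\Omega^3$ is exactly \eqref{eq:normalization} for $n=3$, and positivity of $\omega(J\cdot,\cdot)$ comes from $\mfh$), that $d\omega = 0$ (as $d\Omega=0$ on $\mfh$ and $\omega$ stays basic), and that $d(\alpha\wedge\psi) = d\alpha\wedge\psi - \alpha\wedge d\psi = (\alpha\wedge\beta+\tau)\wedge\psi - \alpha\wedge\beta\wedge\psi = \tau\wedge\psi = 0$ since $\tau$ is $(1,1)$ and $\psi$ is $(3,0)$. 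Hence $(\alpha,\omega,\psi)$ is hypo; and $\cD_\omega = \spa{X}$ is an ideal by construction of the bracket (no $d\xi$ for $\xi\in\mfh^*$ involves $\alpha$, so $[\mfg,X]\subseteq\cD_\omega$).

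It remains to check the two constructions are mutually inverse on isomorphism classes. Starting from (ii), building $\mfg$, then quotienting by $\cD_\omega$ returns $\mfh$ with the same $\Omega,\Psi,\tau$ by inspection. Starting from (i), the only subtlety is that the splitting $\mfg = \mfh\oplus\bR X$ and the choice of the lift $\alpha$ are canonical: $X$ is the Reeb vector field, determined by $(\alpha,\omega,\psi)$, and $\alpha$ is the given one-form, so the reconstructed bracket agrees with the original one — here one uses that $d\xi$ for basic $\xi$ has no $\alpha$-component precisely because $\cD_\omega$ is an ideal (so $\xi([Y,X]) = 0$ for all $Y$). An isomorphism of the data in (i) descends to one of the data in (ii), and conversely an isomorphism of (ii)-data lifts to the $\mfg$'s since $\alpha$ and $X$ are intrinsic; I would spell this naturality out but it is formal. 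The main obstacle is bookkeeping: making sure the class $V_1(\lambda_1)\oplus V_6\oplus V_8$ is forced (so that $\psi$, $\beta$ and $\tau$ really are basic and descend), and that no hidden positivity/openness condition from Remark~\ref{re:stable} is lost when passing between six and seven dimensions — both of these are handled by the identities already assembled before Proposition~\ref{pro:classlambda1V6V8sol2}, so the proof is essentially an organized reassembly of those computations.
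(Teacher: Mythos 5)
Your proposal is correct and follows essentially the same route as the paper: quotient by $\cD_\omega$ and use the $d^2\alpha=0$ computation in one direction, and build the central/abelian extension $\mfh\oplus\bR$ with $d\alpha=\alpha\wedge\beta+\tau$ in the other, with the hypo condition reducing to $\tau\wedge\psi=0$ by type. The only cosmetic difference is that you verify the Jacobi identity dually via $d^2=0$ on $\mfg^*$ rather than by the direct bracket computation the paper carries out, and you spell out the mutual-inverseness slightly more explicitly; both are equivalent.
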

\begin{proof}
Take first a seven-dimensional Lie algebra $\mfg$ endowed with a hypo $\SU(3)$-structure $(\alpha,\omega,\psi)$ for which $\cD_{\omega}$ is an ideal.
Then Proposition \ref{pro:classlambda1V6V8sol} yields that $\mfh:=\mfg/\cD_{\omega}$ is a six-dimensional Lie algebra which possesses a K\"ahler special almost
structure $(\Omega,\Psi)$ given by the push-downs of $(\omega,\psi)$. In particular, $\Psi\wedge \overline{\Psi}=\frac{4i}{3}\Omega^3$.
Moreover,
\begin{equation*}
0=d^2 \alpha=d(\alpha\wedge \beta+\tau)=\tau\wedge \beta-\alpha\wedge d\beta+d\tau
\end{equation*}
for some real two-form $\tau$ of type $(1,1)$ with $\cD_{\omega}\hook \tau=0$. As $\cD_{\omega}$ is an ideal, we must have $\cD_{\omega}\hook d\tau=0$ and
$\cD_{\omega}\hook d\beta=0$ and so get $d\tau=-\tau\wedge \beta$ and $d\beta=0$. Pushing down $\tau$ and $\beta$ to $\mfh$ gives the claimed result.

Let now $(\mfh,\Omega,J)$ be a K\"ahler Lie algebra endowed with a $(3,0)$-form $\Psi$ and a real $(1,1)$-form $\tau$ such that $\Psi\wedge \overline{\Psi}=\frac{4i}{3}\Omega^3$ and such that $d\beta=0$ and $d\tau=-\tau\wedge \beta$ for the unique real one-form $\beta\in \mfh^*$ fulfilling $d\Psi=\beta\wedge \Psi$. We build a seven-dimensional Lie algebra $\mfg$ as the vector space direct
sum $\mfh\oplus \bR$ together with the Lie brackets
\begin{equation*}
[1,X]_{\mfg}=-\beta(X)1,\quad [X,Y]_{\mfg}=[X,Y]_{\mfh}-\tau(X,Y)1
\end{equation*}
for all $X,\, Y\in \mfh$. We have to check the Jacobi identity. To do this, take $X,\,Y,\,Z\in \mfh$. Then
\begin{equation*}
\begin{split}
[1,[X,Y]_{\mfg}]_{\mfg}+[X,[Y,1]_{\mfg}]_{\mfg}+[Y,[1,X]_{\mfg}]_{\mfg}=&[1,[X,Y]_{\mfh}]_{\mfg}+[X,\beta(Y)1]_{\mfg}-[Y,\beta(X)1]_{\mfg}\\
=&-\beta([X,Y]_{\mfh})1=d_{\mfh}\beta(X,Y)1=0
\end{split}
\end{equation*}
and
\begin{equation*}
\begin{split}
\sum_{cyc}[X,[Y,Z]_{\mfg}]_{\mfg}=&\sum_{cyc} [X,[Y,Z]_{\mfh}]_{\mfg}-\sum_{cyc}[X,\tau(Y,Z)1]_{\mfg}\\
=&\sum_{cyc} [X,[Y,Z]_{\mfh}]_{\mfh}-\sum_{cyc} \tau(X,[Y,Z]_{\mfh})1-\sum_{cyc}\beta(X)\tau(Y,Z)\\
=&-(d_{\mfh}\tau+\beta\wedge \tau)1=0.
\end{split}
\end{equation*}
So $\mfg$ is, in fact, a Lie algebra with $\cD_{\omega}$ being an ideal. Let $\pi:\mfg\rightarrow \mfh$ be the projection onto $\mfh$ and note that this map is a Lie algebra homomorphism.
Let $\alpha$ be that element in the annihilator of $\mfh$ in $\mfg$ with $\alpha(1)=1$. Then $d\alpha=\alpha\wedge \pi^*\beta+\pi^*\tau$
and so $(\alpha,\omega,\psi):=(\alpha,\pi^*\Omega,\pi^*\Psi)$ is an $\SU(3)$-structure. Moreover, $(\alpha,\omega,\psi)$ is hypo as $d\omega=\pi^*d\Omega=0$ and
\begin{equation*}
\begin{split}
d(\alpha\wedge \psi)=&\, d\alpha\wedge \pi^*\Psi-\alpha\wedge \pi^*d\Psi=\alpha\wedge \pi^*(\beta\wedge \Psi)+\pi^*(\tau\wedge \Psi)-\alpha\wedge \pi^*(\beta\wedge \Psi)\\
=&\, 0,
\end{split}
\end{equation*}
using that $\tau$ is of type $(1,1)$ and $\psi$ of type $(3,0)$.
\end{proof}
\begin{remark}\label{re:V1lambda1V6V8}
\begin{itemize}
\item
If we start with a six-dimensional K\"ahler Lie algebra with $d\beta=0$ we may always take $\tau=0$ to construct a seven-dimensional Lie algebra with hypo $\SU(3)$-structure with $\cD_{\omega}$ being an ideal. Note that then the hypo $\SU(3)$-structure is of class $V_6$ and $\cD_{\alpha}$ is a subalgebra.
\item
There are six-dimensional K\"ahler Lie algebras for which $\beta$ is closed and ones for which $\beta$ is not closed. An example with closed $\beta$ is given in Example \ref{ex:closedbeta} below whereas an example with $d\beta\neq 0$ is the following one:\\
Take the six-dimensional Lie algebra $\mfh$ with basis $(e_1,\ldots,e_6)$ such that the only non-zero Lie bracket (up to anti-symmetry) are $[e_4,e_1]=e_1$, $[e_4,e_2]=-e_3$, $[e_4,e_3]=e_2$. Set $\Omega:=e^{14}+e^{23}+e^{56}$ and define $J$ uniquely by $J(e_1)=-e_4$, $J(e_2)=-e_3$, $J(e_5)=-e_6$. Then $d\Omega=0$ and $\Psi:=\left(e^1-ie^4\right)\wedge \left(e^2-ie^3\right)\wedge  \left(e^5-ie^6\right)$ is a non-zero $(3,0)$-form with $d\Psi=\left(-e^1-e^4\right)\wedge \Psi$. Hence, $(\mfh,\Omega,J)$ is K\"ahler and
and $\beta=-e^1-e^4$ but $d\beta=-de^1=-e^{14}\neq 0$.
\end{itemize}
\end{remark}
\begin{example}\label{ex:closedbeta}
We consider $\mfh=\mathfrak{r}_2\oplus\mathfrak{r}_2\oplus \mathfrak{r}_2$ with basis $(e_1,\ldots,e_6)$ whose only non-zero Lie bracket (up to anti-symmetry)
are $[e_{2i-1},e_{2i}]=e_{2i-1}$ for $i=1,2,3$. Set $\Omega:=e^{12}+e^{34}+e^{56}$ and $J(e_{2i-1})=-e_{2i}, J(e_{2i})=e_{2i-1}$ for all $i=1,\,2,\,3$.
Then $d\Omega=0$ and $\Psi:=\left(e^1-ie^2\right)\wedge \left(e^3-ie^4\right)\wedge \left(e^5-ie^6\right)$ is a non-zero $(3,0)$-form with
$\Psi\wedge \overline{\Psi}=\frac{4i}{3}\Omega^3$ and $d\Psi=(e^2+e^4+e^6)\wedge \Psi$. So $(\mfh,\omega,J)$ is K\"ahler with $\beta=e^2+e^4+e^6$ and $d\beta=0$. Now
\begin{equation*}
\tau:=e^{14}+e^{16}+e^{36}-e^{23}-e^{25}-e^{45}+2\Omega
\end{equation*}
is a $(1,1)$-form with $d\tau=-\tau\wedge \beta$. Hence, by Theorem \ref{th:Domegaideal} the seven-dimensional Lie algebra $\mfg$ dually defined by
\begin{equation*}
(-e^{12},0,-e^{34},0,-e^{56},0,-e^{27}-e^{47}-e^{67}+\tau)
\end{equation*}
has $\cD_{\omega}=\spa{e_7}$ as ideal and admits the hypo $\SU(3)$-structure $(e^7,\Omega,\Psi)$ of class $V_1(\lambda_1)\oplus V_6\oplus V_8$.
\end{example}
\subsubsection{Torsion class $V_1(\lambda_2)\oplus V_6\oplus V_{12}$}
In this section, we are considering hypo $\SU(3)$-structures $(\alpha,\omega,\psi)$ on seven-dimensional manifolds $M$ of class $V_1(\lambda_2)\oplus V_6\oplus V_{12}$. Note that Proposition \ref{pro:inttorshypo} shows that $(\alpha,\omega,\psi)$ is of class $V_1(\lambda_2)\oplus V_6\oplus V_{12}$ if and only if $\cD_{\alpha}$ is integrable. If this is the case, we may consider an integral manifold $\iota:N\to M$ of $\cD_{\alpha}$ and pullback the forms $\omega$ and $\psi$. Then
\begin{equation*}
d\iota^*\omega=0,\quad d\iota^*\psi=\iota^*d\psi=\iota^*\left(\beta\wedge \psi+i\lambda_2 \alpha\wedge \psi+\alpha\wedge (\gamma-iJ_{(\alpha,\psi)}^*\gamma))\right)=\iota^*\beta\wedge \iota^*\psi
\end{equation*}
and so the almost Hermitian structure underlying the special almost Hermitian structure $(\iota^*\omega,\iota^*\psi)$ is K\"ahler. Moreover, 
\begin{equation*}
0=d^2\alpha=d(\alpha\wedge \beta)=-\alpha\wedge d\beta
\end{equation*}
and so $d\beta\in \alpha\wedge \Omega^1 M$. This implies $d\iota^*\beta=\iota^*d\beta=0$. Moreover, $(\alpha,\omega,\psi)$ is of class $V_1(\lambda_2)\oplus V_{12}$ if and only if $d\alpha=0$ and so if and only if $(\iota^*\omega,\iota^*\psi)$ is Calabi-Yau. Summarizing, we have obtained:
\begin{proposition}\label{pro:V1lambda2V6V12gen}
Let $M$ be a seven-dimensional manifold and $(\alpha,\omega,\psi)$ be a hypo $\SU(3)$-structure on $M$. Then $(\alpha,\omega,\psi)$ is of class $V_1(\lambda_2)\oplus V_6\oplus V_{12}$ if and only if $\cD_{\alpha}$ is integrable. If this is the case and $\iota:N\to M$ is an integral manifold of $\cD_{\alpha}$, then $(\iota^*\omega,\iota^*\psi)$ is a special almost Hermitian structure whose underlying almost Hermitian structure is K\"ahler and for which the unique real one-form $\beta\in \Omega^1 N$ with $d\iota^*\psi=\beta\wedge \iota^*\psi$ fulfills $d\beta=0$. $(\iota^*\omega,\iota^*\psi)$ is Calabi-Yau if and only if $(\alpha,\omega,\psi)$ is of class $V_1(\lambda_2)\oplus V_{12}$.
\end{proposition}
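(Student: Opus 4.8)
The plan is to read everything off the normal forms \eqref{eq:extderhypo} of Proposition \ref{pro:inttorshypo}. For the first equivalence, recall that since $\cD_{\alpha}=\ker\alpha$ has codimension one, Frobenius' theorem says that $\cD_{\alpha}$ is integrable if and only if the restriction of $d\alpha$ to $\cD_{\alpha}$ vanishes. By \eqref{eq:extderhypo}, $d\alpha=\alpha\wedge\beta+\lambda_1\omega+\tilde\omega$, and restricting to $\cD_{\alpha}$ kills the term $\alpha\wedge\beta$, leaving $\lambda_1\,\omega|_{\cD_{\alpha}}+\tilde\omega|_{\cD_{\alpha}}$; under the decomposition $\Lambda^2\cD_{\alpha}^*=\bR\omega^0\oplus[\Lambda^{1,1}_0(\bR^6)^*]\oplus[[\Lambda^{2,0}(\bR^6)^*]]$ this lies in the first two summands, so it vanishes exactly when $\lambda_1=0$ and $\tilde\omega|_{\cD_{\alpha}}=0$. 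Since $\tilde\omega$ annihilates the complementary line $\cD_{\omega}$, the latter is the same as $\tilde\omega=0$, so $\cD_{\alpha}$ is integrable if and only if $(\alpha,\omega,\psi)$ is of class $V_1(\lambda_2)\oplus V_6\oplus V_{12}$.

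Assume now this holds, so $d\alpha=\alpha\wedge\beta$, fix an integral manifold $\iota\colon N\to M$ of $\cD_{\alpha}$, and set $\beta':=\iota^*\beta$. That $(\iota^*\omega,\iota^*\psi)$ is a special almost Hermitian structure on $N$ is immediate from the model forms: for an adapted basis $(e_1,\ldots,e_7)$ for $(\alpha,\omega,\psi)$ with $e^7=\alpha$ one has $T_pN=\spa{e_1,\ldots,e_6}$ and $\iota^*\omega=e^{12}+e^{34}+e^{56}$, $\iota^*\psi=e^1_{\bC}\wedge e^2_{\bC}\wedge e^3_{\bC}$, which are exactly the model forms of a special almost Hermitian structure on a six-manifold. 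For the K\"ahler property I would pull back \eqref{eq:extderhypo}: $d\iota^*\omega=\iota^*d\omega=0$ since the structure is hypo, while $d\psi=\beta\wedge\psi+i\lambda_2\,\alpha\wedge\psi+\alpha\wedge(\gamma-iJ_{(\alpha,\psi)}^*\gamma)$ (the computation carried out just before the proposition), all of whose terms other than $\beta\wedge\psi$ are divisible by $\alpha$ and hence vanish under $\iota^*$; thus $d\iota^*\psi=\beta'\wedge\iota^*\psi$. By the discussion in Definition \ref{def:saHs} this means the underlying almost Hermitian structure of $(\iota^*\omega,\iota^*\psi)$ is K\"ahler, and $\beta'$ is the unique real one-form with this property, because $\eta\wedge\iota^*\psi=0$ forces $\eta$ to be of type $(1,0)$ and a real $(1,0)$-form vanishes. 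Finally, $0=d^2\alpha=d\alpha\wedge\beta-\alpha\wedge d\beta=-\alpha\wedge d\beta$ gives $d\beta\in\alpha\wedge\Omega^1M$, and hence $d\beta'=\iota^*d\beta=0$.

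For the last claim, note that in the integrable case $d\alpha=\alpha\wedge\beta$ with $\beta$ annihilating $\cD_{\omega}$, so $d\alpha=0$ if and only if $\beta=0$; and $\beta=0$ together with $\lambda_1=\tilde\omega=0$ is precisely the class $V_1(\lambda_2)\oplus V_{12}$. If $\beta=0$ then $d\iota^*\omega=0$ and $d\iota^*\psi=\beta'\wedge\iota^*\psi=0$, so $(\iota^*\omega,\iota^*\psi)$ is Calabi-Yau; conversely, if it is Calabi-Yau then $\iota^*\beta=\beta'=0$, and since $\beta$ also vanishes on the complement $\cD_{\omega}$ of $TN=\cD_{\alpha}$ this forces $\beta=0$ pointwise. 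The argument is essentially bookkeeping with \eqref{eq:extderhypo}, so I do not expect a genuine obstacle; the only slightly delicate points are pinning down the summand of $\Lambda^2\cD_{\alpha}^*$ that $d\alpha|_{\cD_{\alpha}}$ occupies and the equivalence $\beta=0\iff\iota^*\beta=0$, both of which rest on $\beta$ and $\tilde\omega$ being sections of the annihilator of $\cD_{\omega}$.
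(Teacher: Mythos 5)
Your proof is correct and follows essentially the same route as the paper: Frobenius plus the normal form \eqref{eq:extderhypo} for the integrability equivalence, pullback of the differentials to a leaf for the K\"ahler property, $d^2\alpha=0$ for $d\beta'=0$, and the characterization $d\alpha=0$ for the Calabi--Yau case. You even fill in a few details the paper leaves implicit (the identification of the summand of $\Lambda^2\cD_\alpha^*$ containing $d\alpha|_{\cD_\alpha}$, the uniqueness of $\beta'$, and the converse in the last claim), so there is nothing to correct.
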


Conversely, suppose we are in a left-invariant setting and have given a six-dimensional K\"ahler Lie algebra $(\mfh,\omega,J)$ with $d\beta=0$ for the unique real one-form $\beta\in \mfh^*$ with $d\psi=\beta\wedge \psi$ for some $\psi\in\Lambda^{3,0} \mfh^*$ with $\psi\wedge \overline{\psi}=\tfrac{4i}{3}\omega^3$. One way to obtain a hypo $\SU(3)$-structure $(\alpha,\omega,\psi)$ of type $V_1(\lambda_2)\oplus V_6\oplus V_{12}$ on a seven-dimensional Lie algebra with $\mfh=\cD_{\alpha}$ is using Theorem \ref{th:Domegaideal} with $\tau=0$, cf. Remark \ref{re:V1lambda1V6V8}. But then $(\alpha,\omega,\psi)$ is of type $V_6$ and $\cD_{\omega}$ is an ideal. To get examples for which $\cD_{\omega}=\spa{X}$ is not necessarily an ideal, we consider the vector space sum $\mfg:=\mfh\oplus \spa{X}$ and first have to set the Lie brackets $[X,Z]_{\mfg}$ for $Z\in \mfh$ such that $\mfg$ gets a Lie algebra. As we want to get $d\alpha=\alpha\wedge \beta$, we must have $[X,Z]_{\mfg}=f(Z)-\beta(Z)X$ for some $f\in \mathrm{End}(\mfh)$ and the Jacobi identity implies that $f$ and $\beta$ have to fulfill the equations $f([Z_1,Z_2]_{\mfh})=[f(Z_1),Z_2]_{\mfh}+[Z_1,f(Z_2)]_{\mfh}$ and
$f([Y,Z_1]_{\mfh})=[f(Y),Z_1]_{\mfh}+[Y,f(Z_1)]_{\mfh}-||\beta||^2 f(Z_1)$ for all $Z_1,Z_2\in \ker(\beta)$ and $Y:=\beta^{\sharp}\in \mfh$. A solution of these equations is provided by setting $f|_{\ker(\beta)}:=\ad(Y)|_{\ker(\beta)}$ and $f(Y):=||\beta||^2 Y$ noting that $\ker(\beta)$ is an ideal in $\mfh$. Next, we have to determine when the $\SU(3)$-structure $(\alpha,\omega,\psi)$ on $\mfg$ is hypo. First of all, $d_{\mfg}(\alpha\wedge \psi)= \alpha\wedge \beta\wedge \psi-\alpha\wedge d_{\mfh}\psi=0$
and $d_{\mfg}\omega=d_{\mfh}\omega+\alpha\wedge f.\omega=\alpha\wedge f.\omega$. Thus, $(\alpha,\omega,\psi)$ is hypo if and only if $f.\omega=0$. Moreover, we have $(\cL_{Y}\omega)(Z_1,Z_2)= f.\omega(Z_1,Z_2)$ and
\begin{equation*}
-f.\omega(Y,Z_1)=\omega( ||\beta||^2 Y, Z_1)+\omega(Y,[Y,Z_1]_{\mfh})=\left(\beta\wedge Y\hook \omega\right)(Y,Z_1)-(\cL_{Y} \omega)(Y,Z_1)
\end{equation*}
for all $Z_1,\,Z_2\in \ker(\beta)$. Hence, $f.\omega=0$ if and only if $\cL_{Y}\omega= \beta\wedge Y\hook \omega$ and we got
\begin{proposition}\label{pro:KahlertoV1V6V12}
Let $(\mfh,\omega,J)$ be a six-dimensional K\"ahler Lie algebra and $\psi\in \Lambda^{3,0}\mfh^*$ be such that $\psi\wedge \overline{\psi}=\tfrac{4i}{3}\omega^3$ holds. Assume that the unique real one-form $\beta\in \mfh^*$ with $d\psi=\beta\wedge \psi$ fulfills $d\beta=0$ and that $Y:=\beta^{\sharp}$ fulfills $\cL_Y \omega=\beta\wedge (Y\hook \omega)$. Then the seven-dimensional Lie algebra $\mfg:=\mfh\oplus \bR\cdot X$ endowed with the anti-symmetric bilinear map $[\cdot,\cdot]_{\mfg}:\mfg\times \mfg\rightarrow \mfg$ defined by $[\cdot,\cdot]_{\mfg}|_{\Lambda^2\mfh}=[\cdot,\cdot]_{\mfh}$, $[X,Z]_{\mfg}:= [Y,Z]_{\mfh}$ for all $Z\in \ker(\beta)$ and $[X,Y]_{\mfg}:=\left\|\beta\right\|^2 (Y-X)$ is a seven-dimensional Lie algebra which admits the hypo $\SU(3)$-structure $(\alpha,\omega,\psi)$ of type $V_1(\lambda_2)\oplus V_6\oplus V_{12}$, where $\alpha\in \Ann{\mfh}$ is uniquely defined by $\alpha(X)=1$.
\end{proposition}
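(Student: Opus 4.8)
The plan is to check, in this order, that $\mfg$ with the stated bracket is a Lie algebra, that $(\alpha,\omega,\psi)$ defines an $\SU(3)$-structure on it, that this structure is hypo, and that its intrinsic torsion lies in $V_1(\lambda_2)\oplus V_6\oplus V_{12}$; almost all the computations have already been carried out in the discussion preceding the statement, so the main work is in assembling them. For the Lie algebra part I would write $[X,Z]_{\mfg}=f(Z)-\beta(Z)X$ with $f|_{\ker\beta}=\ad(Y)|_{\ker\beta}$ and $f(Y)=\|\beta\|^2 Y$, which reproduces the brackets in the statement since $f(Z)-\beta(Z)X=[Y,Z]_{\mfh}$ for $Z\in\ker\beta$ and $f(Y)-\beta(Y)X=\|\beta\|^2(Y-X)$. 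The only Jacobi triples not inherited from $\mfh$ are those containing exactly one $X$; for $Z_1,Z_2\in\mfh$ the $X$-part of the Jacobiator collapses to $-\beta([Z_1,Z_2]_{\mfh})=d_{\mfh}\beta(Z_1,Z_2)=0$, and the $\mfh$-part to the twisted Leibniz identity $f([Z_1,Z_2]_{\mfh})=[f(Z_1),Z_2]_{\mfh}+[Z_1,f(Z_2)]_{\mfh}+\beta(Z_2)f(Z_1)-\beta(Z_1)f(Z_2)$. Since $d\beta=0$ forces $[\mfh,\mfh]_{\mfh}\subseteq\ker\beta$, the space $\ker\beta$ is an ideal and $\mfh=\ker\beta\oplus\bR Y$, so it suffices to test this on $Z_1,Z_2\in\ker\beta$ and on $(Z_1,Y)$ with $Z_1\in\ker\beta$: the first case is the Jacobi identity of $\mfh$ for $(Y,Z_1,Z_2)$, and in the second the $\|\beta\|^2$-terms cancel and what remains is again the Jacobi identity of $\mfh$.

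Next I would verify that $(\alpha,\omega,\psi)$ is an $\SU(3)$-structure and read off the relevant differentials. By Remark \ref{re:stable}, $(\omega,\psi)$ restricts on $\mfh$ to a special almost Hermitian structure: the individual model tensors are correct, $\omega\wedge\psi=0$ automatically since it has type $(4,1)$ on a three-complex-dimensional space, the hypothesis $\psi\wedge\overline{\psi}=\tfrac{4i}{3}\omega^3$ is exactly the normalization \eqref{eq:normalization} for $n=3$, and positive-definiteness of $\omega(J\cdot,\cdot)$ is part of the K\"ahler assumption. Extending $\omega,\psi$ by $X\hook\omega=X\hook\psi=0$ and taking $\alpha\in\Ann{\mfh}$ with $\alpha(X)=1$, appending $X$ to an adapted basis of $(\omega,\psi)$ on $\mfh$ produces an adapted basis for $(\alpha,\omega,\psi)$; hence $(\alpha,\omega,\psi)$ is an $\SU(3)$-structure on $\mfg$ with $\cD_{\alpha}=\mfh$ and $\cD_{\omega}=\bR X$. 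From the splitting $\mfg^*=\mfh^*\oplus\bR\alpha$ one gets $d_{\mfg}\eta=d_{\mfh}\eta+\alpha\wedge(f.\eta)$ for $\eta\in\Lambda^k\mfh^*$ and $d_{\mfg}\alpha=\alpha\wedge\beta$, so $d_{\mfg}\omega=\alpha\wedge(f.\omega)$ (using $d_{\mfh}\omega=0$) and $d_{\mfg}\psi=\beta\wedge\psi+\alpha\wedge(f.\psi)$ (using $d_{\mfh}\psi=\beta\wedge\psi$).

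Then the hypo conditions follow: $d_{\mfg}(\alpha\wedge\psi)=\alpha\wedge\beta\wedge\psi-\alpha\wedge(\beta\wedge\psi+\alpha\wedge(f.\psi))=0$ because $\alpha\wedge\alpha=0$, while $d_{\mfg}\omega=0$ is equivalent to $f.\omega=0$. To check $f.\omega=0$ I would evaluate it against $\mfh=\ker\beta\oplus\bR Y$: on $\ker\beta\times\ker\beta$ one has $(f.\omega)(Z_1,Z_2)=(\cL_Y\omega)(Z_1,Z_2)$, which vanishes since $\beta\wedge(Y\hook\omega)$ does there; on $(Y,Z_1)$ with $Z_1\in\ker\beta$ a short computation gives $-(f.\omega)(Y,Z_1)=\|\beta\|^2\omega(Y,Z_1)+\omega(Y,[Y,Z_1]_{\mfh})=(\beta\wedge Y\hook\omega)(Y,Z_1)-(\cL_Y\omega)(Y,Z_1)$, which vanishes by the hypothesis $\cL_Y\omega=\beta\wedge(Y\hook\omega)$. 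Finally $\cD_{\alpha}=\mfh$ is a subalgebra, hence involutive, so Proposition \ref{pro:V1lambda2V6V12gen} gives that the structure is of type $V_1(\lambda_2)\oplus V_6\oplus V_{12}$ (equivalently, $d_{\mfg}\alpha=\alpha\wedge\beta$ has no $\omega$- or $\tilde\omega$-component in the decomposition of Proposition \ref{pro:inttorshypo}). I expect the only genuinely delicate point to be the Jacobi verification --- pinning down $f$ as the correct twisted derivation and reducing its defining relation to the Jacobi identity of $\mfh$ via the ideal $\ker\beta$ --- together with the sign bookkeeping in the equivalence $f.\omega=0\iff\cL_Y\omega=\beta\wedge(Y\hook\omega)$; everything else is a routine $G$-structure calculation.
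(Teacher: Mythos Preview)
Your proposal is correct and follows essentially the same approach as the paper: indeed, the paper's proof is precisely the discussion preceding the proposition, which you have faithfully reassembled and slightly expanded (e.g., explicitly invoking Remark~\ref{re:stable} for the $\SU(3)$-structure and Proposition~\ref{pro:V1lambda2V6V12gen} for the torsion type). Your unified twisted-Leibniz formula $f([Z_1,Z_2]_{\mfh})=[f(Z_1),Z_2]_{\mfh}+[Z_1,f(Z_2)]_{\mfh}+\beta(Z_2)f(Z_1)-\beta(Z_1)f(Z_2)$ is a clean way to package the two cases the paper treats separately, and your reduction of both cases to the Jacobi identity of $\mfh$ (with the $\|\beta\|^2$-terms cancelling in the $(Z_1,Y)$ case) is correct.
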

\begin{example}
We take $\mfh=\mathfrak{r}_2\oplus\bR^4$ with basis $(e_1,\ldots,e_6)$ such that the only non-zero Lie bracket (up to anti-symmetry) is given by $[e_1,e_2]=e_1$. Moreover, we take $\omega$ and $\psi$ of the same form as in Example \ref{ex:closedbeta}. Then $\beta=e^2$, $Y=e_2$ and $\cL_Y \omega=d(Y\hook \omega)=-de^1=e^{12}=e^2\wedge (-e^1)=e^2\wedge (Y\hook \omega)$. Thus, we may apply Proposition \ref{pro:KahlertoV1V6V12} to get a seven-dimensional Lie algebra $\mfg$ with basis $(e_1,\ldots,e_7)$ whose non-zero Lie brackets (up to anti-symmetry) $[e_1,e_2]=e_1$, $[e_7,e_1]=-e_1$, $[e_7,e_2]=-e_7+e_2$ with hypo $\SU(3)$-structure $(e^7,\omega,\psi)$ of type $V_1(\lambda_2)\oplus V_6\oplus V_{12}$.
\end{example}
The next proposition classifies hypo $\SU(3)$-structures of type $V_1(\lambda_2)\oplus V_{12}$ on seven-dimensional Lie algebras. Thereto, note that by Proposition \ref{pro:inttorshypo} a hypo $\SU(3)$-structure $(\alpha,\omega,\psi)$ is of type $V_1(\lambda_2)\oplus V_{12}$ if and only if $\cD_{\alpha}$ is an ideal in $\mfg$ and that then by Proposition \ref{pro:V1lambda2V6V12gen} the six-dimensional Lie algebra $\cD_{\alpha}$ possesses the Calabi-Yau structure $(\omega,\psi)$. Using these properties, we obtain
\begin{thm}\label{th:ClassV1lambda2V12}
Let $(\alpha,\omega,\psi)\in \mfg^*\times \Lambda^2 \mfg^*\times \left(\Lambda^3 \mfg^*\otimes \bC\right)$ be a hypo $\SU(3)$-structure on a seven-dimensional Lie algebra $\mfg$. Then $(\alpha,\omega,\psi)$ is of type $V_1(\lambda_2)\oplus V_{12}$ if and only if $(\mfg,\alpha,\omega,\psi)$ is isomorphic to a hypo $\SU(3)$ Lie algebra $(\tilde{\mfg},\tilde{\alpha},\tilde{\omega},\tilde{\psi})$ in the following list:
\begin{itemize}
\item[(i)]
$\tilde{\mfg}$ is an almost Abelian Lie algebra with codimension one Abelian ideal $\mfu$ for which there exists a non-degenerate two-form $\tilde{\omega}\in \Lambda^2 \mfu^*$ such that some $X \in \tilde{\mathfrak{g}}\backslash \mfu$ acts symplectically on $(\mfu,\tilde{\omega})$, $\tilde{\psi}\in \Lambda^3 \Ann{X}\otimes \bC\cong \Lambda^3 \mfu^*\otimes \bC$ is such that $(\tilde{\omega},\tilde{\psi})$ defines a special almost Hermitian structure on $\mfu$ and $\alpha\in \Ann{\mfu}$ fulfills $\alpha(X)=1$.
\item[(ii)]
$\tilde{\mfg}$ has a basis $(e_1,\ldots,e_7)$ with non-zero Lie brackets (up to anti-symmetry) given by
\begin{equation*}
\begin{split}
[e_1,e_6]&=a e_2,\, [e_2,e_6]=-a e_1,\, [e_3,e_6]=-ae_4,\, [e_4,e_6]=ae_3,\\
[e_1,e_7]&=-a_1 e_2+a_2 e_3+a_3 e_4,\, [e_2,e_7]=a_1 e_1+a_3 e_3-a_2 e_4,\\
[e_3,e_7]&= a_2 e_1+a_3 e_2-a_4 e_4,\, [e_4,e_7]=a_3 e_1-a_2 e_2+a_4 e_3,\, [e_6,e_7]=a_5 e_5
\end{split}
\end{equation*}
for certain $a\in \bR^*$, $(a_1,\ldots,a_5)\in \bR^5$ and
\begin{equation*}
(\tilde{\alpha},\tilde{\omega},\tilde{\psi}):=\left(e^7,e^{12}+e^{34}+e^{56},\left(e^1-ie^2\right)\wedge\left(e^3-ie^4\right)\wedge\left(e^5-ie^6\right)\right). 
\end{equation*}
\end{itemize}
\end{thm}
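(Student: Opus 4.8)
The plan is to convert the torsion condition into algebraic data on a six-dimensional ideal, exploit that this data forces flatness, and then extract the two families by a normal-form argument. By Proposition~\ref{pro:inttorshypo} the summands $\alpha\wedge\beta$, $\lambda_1\omega$, $\tilde\omega$ of $d\alpha$ lie in pairwise distinct irreducible $\SU(3)$-submodules of $\Lambda^2\mfg^*$, so $(\alpha,\omega,\psi)$ is of type $V_1(\lambda_2)\oplus V_{12}$ exactly when $d\alpha=0$; by $d\alpha(Y,Z)=-\alpha([Y,Z])$ this is equivalent to $\cD_\alpha=\ker\alpha$ being an ideal. Write $\mfg=\mfh\rtimes_D\bR X$ with $\mfh:=\cD_\alpha$, $X$ the Reeb vector field, $D:=\ad(X)|_\mfh$ and $\alpha\in\Ann{\mfh}$, $\alpha(X)=1$. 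Since $d_\mfg\omega=d_\mfh\omega+\alpha\wedge(D.\omega)$, $d_\mfg(\alpha\wedge\psi)=-\alpha\wedge d_\mfh\psi$, and $\Lambda^3\mfg^*=\Lambda^3\mfh^*\oplus\alpha\wedge\Lambda^2\mfh^*$, the hypo equations say precisely that $(\omega,\psi)$ restricts to a \emph{Calabi--Yau} $\SU(3)$-structure on the six-dimensional Lie algebra $\mfh$ (this is the content of Proposition~\ref{pro:V1lambda2V6V12gen}) and that $D\in\mathrm{Der}(\mfh)\cap\mathfrak{sp}(\omega)$. Hence the theorem reduces to classifying such pairs $(\mfh,D)$ up to isomorphism of the induced data on $\mfg$.

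A Calabi--Yau $\SU(3)$-structure is parallel (Definition~\ref{def:saHs}), so $g_{(\omega,\psi)}$ is Ricci-flat, hence flat since it is a homogeneous (left-invariant) metric. By a theorem of Milnor on Lie algebras with flat left-invariant metric, $\mfh=\mfb\oplus\mfa$ orthogonally, with $\mfa\supseteq[\mfh,\mfh]$ an abelian ideal, $\mfb$ an abelian subalgebra, and $\ad_b|_\mfa\in\mathfrak{so}(\mfa)$ for all $b\in\mfb$. The Levi-Civita connection of this flat metric satisfies $\nabla_b=\ad_b$ on $\mfa$ and $\nabla_u\equiv 0$ for $u\in\mfa$, so parallelism of $(\omega,\psi)$ forces each $\ad_b$ into the stabiliser $\mathfrak{su}(3)$ of $(\omega,\psi)$; in particular $\ad_b$ preserves $J_\psi$, $\omega$ and $\psi$.

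Now $\ad_b$ kills $\mfb$ and has $J_\psi$-invariant kernel, hence kills a $J_\psi$-invariant subspace of $\mfh$ containing $\mfb$. Since a traceless skew-Hermitian endomorphism of $(\mfh,J_\psi)\cong\bC^3$ with a prescribed coordinate subspace in its kernel is pinned down by a single rotation angle, the effective image of $\ad\colon\mfb\to\mathfrak{su}(3)$ is at most one-dimensional, and a nonzero acting generator $b$ has $\ad_b$ of rank $4$ on the $J_\psi$-invariant $4$-plane $(\mfb\oplus J_\psi\mfb)^\perp$, acting there by two rotations of equal speed and opposite orientation (opposite orientation being exactly the condition under which the $(3,0)$-form stays closed), the remaining direction in $\mfa$ being central. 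Thus $\mfh$ is either abelian ($\mfh\cong\bR^6$) or the solvable Lie algebra spanned by $e_1,\dots,e_6$ with $[e_1,e_6]=ae_2$, $[e_2,e_6]=-ae_1$, $[e_3,e_6]=-ae_4$, $[e_4,e_6]=ae_3$ for some $a\neq 0$, carrying the standard $(\omega,\psi)$.

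Finally one attaches $D$ and normalises. If $\mfh=\bR^6$ then every endomorphism is a derivation, so the condition on $D$ is just $D\in\mathfrak{sp}(\omega)$; combined with $\alpha\in\Ann{\mfh}$, $\alpha(X)=1$ this is precisely the almost Abelian family (i). If $\mfh$ is the solvable algebra above, one computes $\mathrm{Der}(\mfh)\cap\mathfrak{sp}(\omega)$ and reduces a general element to normal form under the automorphism group of $(\mfh,\omega,\psi)$, obtaining the five-parameter family of brackets with parameters $(a_1,\dots,a_5)$ in (ii); the converse inclusion is a direct check (the two families overlap, e.g.\ for $a_1=\dots=a_4=0$, which is harmless for the statement). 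I expect the real work — and the main obstacle — to be this flat-case structural analysis together with the derivation bookkeeping of the last step: getting from $\ad_b.\omega=0$, $\ad_b.\psi=0$ (and the forced integrability of $J_\psi$) to the rank bound and the equal-speed, opposite-orientation normal form for the acting generator, and then bringing an arbitrary compatible derivation of the solvable $\mfh$ into the displayed shape.
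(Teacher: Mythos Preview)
Your overall strategy coincides with the paper's: reduce to a Calabi--Yau structure on the ideal $\mfh=\cD_\alpha$, use Ricci-flatness and Milnor's description of flat Lie algebras to obtain $\mfh=\mfa\rtimes\mfb$ with $\ad_b\in\mathfrak{su}(3)$, pin down the non-abelian case, and then compute $\mathrm{Der}(\mfh)\cap\mathfrak{sp}(\omega)$ for the extension by $e_7$. The paper executes this last step by writing $h=\ad(e_7)|_\mfh$ in block form, translating the derivation condition into $[A,g]=-b\cdot g$ for $g=\diag(aD,-aD)$, and solving directly to produce the five parameters $a_1,\dots,a_5$; no further normalisation by automorphisms is needed.

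The gap is in your justification for the one-dimensionality of the image of $\ad\colon\mfb\to\mathfrak{su}(3)$. The claim that a traceless skew-Hermitian endomorphism of $\bC^3$ with a prescribed complex subspace in its kernel is ``pinned down by a single rotation angle'' is false: once you have shown $\mfb$ is $J_\psi$-invariant (a step you also need to argue separately), the kernel is only a complex line, and the remaining freedom is all of $\mathfrak{su}(2)$, three real dimensions. What actually forces the image to be at most one-dimensional is that it is an \emph{abelian} subalgebra of $\mathfrak{su}(2)$, since $[\ad_{b_1},\ad_{b_2}]=\ad_{[b_1,b_2]}=0$, and Cartan subalgebras of $\mathfrak{su}(2)$ have dimension one. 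The paper reaches the same conclusion by explicit coordinate computation rather than this abstract route: it first rules out $\dim_\bR\mfb=4$ via a direct contradiction with $d\psi=0$, then for $\dim_\bR\mfb=2$ uses transitivity of $\SU(3)$ on complex $2$-planes to place $\mfa=[\mfh,\mfh]$ in standard position and reads off $\ad_{e_5},\ad_{e_6}\in\mathfrak{su}(2)$ from $d\omega=0$ and $d\psi_+=0$, whence commuting and hence linearly dependent. (Also, your ``remaining direction in $\mfa$ being central'' should read $\mfb$.)
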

\begin{proof}
First of all, we classify six-dimensional Calabi-Yau Lie algebras $(\mfh,\omega,\psi)$. Of course, $\bR^6$ is a Calabi-Yau Lie algebra for any special almost Hermitian structure and we assume now that $\mfh$ is not Abelian. However, still the induced Riemannian metric is flat as it is Ricci-flat, cf. \cite{AK}. Now Milnor \cite{Mi} classified flat Lie algebras and from this classification we get, cf. also \cite[Proposition 2.1]{BDF}, that
 $\mfh=\bR^{2k}\rtimes \bR^{6-2k}$ for some $k\in \{1,2\}$ with orthogonal factors, $\bR^{6-2k}$ acting by skew-symmetric endomorphisms on $\bR^k$ with $\ad(Z)=\nabla_Z$ for all $Z\in \bR^{6-k}$, $\nabla_W=0$ for all $W\in \bR^k$ and $[\mfh,\mfh]= \bR^{2k}$. Moreover, $0=(\nabla_Z J)(W)=\nabla_Z J(W)-J(\nabla_Z W)=[Z,J(W)]-J([Z,W])$ for all $Z\in \bR^{6-k}$ and all $W\in \bR^k$, which gives us that the commutator $[\mfh,\mfh]$ is $J$-invariant. Furthermore, we must have $k=2$: Otherwise, we may choose a $\bC$-basis $W_1,\,W_2,\,W_3$ of $\mfh$ with $W_1\in [\mfh,\mfh]=\bR^2\subseteq \bR^2\rtimes \bR^4$ and $W_2,\,W_3\in \bR^4\subseteq \bR^2\rtimes \bR^4$ such that $[W_2,W_1]=a JW_1$ for some $0\neq a\in \bR$ and $[JW_2,W_1]=0=[W_3,W_1]=[JW_3,W_1]$. But then
\begin{equation*}
0\neq a\psi(JW_1,JW_2,W_3)=\psi([W_2,W_1],JW_2,W_3)=-d\psi(W_2,W_1,JW_2,W_3)=0,
\end{equation*} 
	a contradiction. Hence, $k=2$ and $\mfh=\bR^4\rtimes \bR^2$. Now $\SU(3)$ acts transitively on the space of all $J$-invariant four-dimensional subspaces and so we may assume that there exists an adapted basis $e_1,\ldots,e_6$ for $(\omega,\psi)$ with $e_1,\ldots,e_4$ being a basis for $\bR^4$ and $e_5,\,e_6$ being a basis of $\bR^2$. Then $\omega=\omega_1+e^{56}$, $\psi_+=\omega_2\wedge e^5-\omega_3\wedge e^6$ for $\omega_1:=e^{12}+e^{34}$, $\omega_2:=e^{13}-e^{24}$ and $\omega_3:=e^{14}+e^{23}$. Moreover, setting $f:=\ad(e_5)|_{[\mfh,\mfh]}$ and $g:=\ad(e_6)|_{[\mfh,\mfh]}$, we have $d\nu=e^5\wedge f.\nu+e^6\wedge g.\nu$ for all $\nu\in \Lambda^k \spa{e^1,\ldots,e^4}$, $de^5=0$ and $de^6=0$. Hence $0=d\omega=e^5\wedge f.\omega_1+e^6\wedge g.\omega_2$, i.e. $f.\omega_1=g.\omega_1=0$. So $f,g\in \mathfrak{u}(2)$ and $f$ and $g$ act on $\spa{\omega_2,\omega_3}$ as rotations, i.e. $f.\omega_2=a\omega_3$, $f.\omega_3=-a\omega_2$, $g.\omega_2=b\omega_3$, $g.\omega_3=-b\omega_2$ for $a,b\in \bR$. Thus,
\begin{equation*}
0=d\psi_+=e^6\wedge g.\omega_2\wedge e^5-e^5\wedge f.\omega_3\wedge e^6=-\left(b\omega_3-a\omega_2\right)\wedge e^{56}.
\end{equation*}
This implies that $f$, $g$ preserve $\omega_2$, $\omega_3$, i.e. that $f,\,g\in \mathfrak{sp}(1)=\mathfrak{su}(2)$. As they must commute, they have to be linearly dependent. Now any element in $\mathfrak{su}(2)$ can be conjugated to a matrix of the form $\diag(aD,-aD)$ with $D:=\left(\begin{smallmatrix} 0 & -1 \\ 1 & 0 \end{smallmatrix}\right)$ for some $a\in \bR^*$ by an element of $\mathrm{U}(2)$ and so we can find some $A\in SU(3)$ preserving the subspaces $\bR^4$ and $\bR^2$ of $\mfh$ such that $Ae_5$ is central and $g=\ad(Ae_6)|_{\bR^4}=\diag(aD,-aD)$ with respect to the basis $Ae_1,Ae_2,Ae_3,Ae_4$. Denoting the adapted basis $Ae_1,\ldots Ae_6$ by $e_1,\ldots,e_6$, we get $\omega=e^{12}+e^{34}+e^{56}$, $\psi=\left(e^1-ie^2\right)\wedge \left(e^3-ie^4\right)\wedge \left(e^5-ie^6\right)$ and that the Lie bracket of $\mfh$ is determined by $[e_6,e_1]=ae_2$, $[e_6,e_2]=-ae_1$, $[e_6,e_3]=-ae_4$ and $[e_6,e_4]=ae_3$. So any six-dimensional Calabi-Yau Lie algebra $(\mfh,\omega,\psi)$ is either Abelian or of the just determined form.

Now we have to check when an $\SU(3)$-structure $(e^7,\omega,\psi)$ on a seven-dimensional Lie algebra $\mfg$ of the form $\mfg=\mfh\rtimes \bR e_7$ with $(\omega,\psi)$ being a Calabi-Yau structure on $\mfh$ is hypo. Generally, setting $h:=\ad(e_7)|_{\mfh}$, we have $d_{\mfg}e^7=0$ and $d_{\mfg}\nu=d_{\mfh}\nu+e^7\wedge h.\nu$ for all $\nu\in \Lambda^k \mfh^*$ and so $d_{\mfh}\omega=\alpha\wedge h.\omega$ and $d_{\mfg}\left(e^7\wedge \psi\right)=0$. Hence, $(e^7,\omega,\psi)$ is hypo if and only if $h\in \mathfrak{sp}(\mfh,\omega)$. If $\mfh$ is Abelian, this gives (i) in Theorem \ref{th:ClassV1lambda2V12}. In the other case, we have more restrictions as $h\in \mathfrak{der}(\mfh)\neq \mathfrak{gl}(\mfh)$. As a derivation, $h$ has to preserve both the center $\spa{e_5}$ as well as the commutator $\spa{e_1,e_2,e_3,e_4}$ of $\mfh$ and so it has the form
\begin{equation*}
h=\begin{pmatrix} A & 0 & v \\ 0 & b & c \\ 0 & 0  & d \end{pmatrix}
\end{equation*}
with respect to the basis $e_1,\ldots,e_6$, where $A\in \bR^{4\times 4}$, $v\in \bR^2$ and $b,c,d\in \bR$. Since $\mathfrak{sp}(\mfh,\omega)=\mathfrak{sp}(6,\bR)$ with respect to the basis $e_1,\ldots,e_6$, we must have $A\in\mathfrak{sp}(4,\bR)$, $v=0$ and $d=-b$. Moreover, $h$ is a derivation if and only if
\begin{equation*}
\begin{split}
Ag(e_i)&=A\, [e_6,e_i]=h([e_6,e_i])=[h(e_6),e_i]+[e_6,h(e_i)]=-b[e_6,e_i]+[e_6,Ae_i]\\
&=-b\cdot g(e_i)+gA(e_i)
\end{split}
\end{equation*}
for all $i=1,\ldots,4$, i.e. if and only if $[A,g]=-b\cdot g$. Using that $A\in\mathfrak{sp}(4,\bR)$, we may solve this equation explicitly and get $b=0$ as well as
\begin{equation*}
A=\begin{pmatrix} 
 0 & a_1 & a_2 & a_3 \\
-a_1 & 0 &  a_3 & -a_2 \\
a_2 & a_3 & 0 & a_4 \\
a_3 & -a_2 & -a_4 & 0
\end{pmatrix}
\end{equation*}
for arbitrary $a_1,\ldots, a_4\in \bR$. This shows the statement.
\end{proof}
\subsubsection{Torsion class $2V_1\oplus V_{12}$}
Next, we look at the torsion class $2V_1\oplus V_{12}$. Then $d\alpha=\lambda_1\omega$, $d\omega=0$ and so $0=d^2\alpha=d\lambda_1\wedge\omega$, which gives $d\lambda_1=0$.
\begin{lemma}\label{le:normalizelambda1}
Let $(\alpha,\omega,\psi)\in  \Omega^1 M\times \Omega^2 M\times \Omega^3 M$ be a hypo $\SU(3)$-structure of type $2V_1\oplus V_{12}$. Then the function $\lambda_1\in C^{\infty} (M)$ uniquely defined by $d\alpha=\lambda_1 \omega$ is constant. In particular, any manifold which admits a hypo $\SU(3)$-structure of type $2V_1\oplus V_{12}$ with $\lambda_1\neq 0$ also admits one with $\lambda_1=1$.
\end{lemma}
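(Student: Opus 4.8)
The plan is to treat the two assertions separately; the substance lies in the rescaling claim, since the constancy of $\lambda_1$ is almost immediate from the computation $0=d^2\alpha=d\lambda_1\wedge\omega$ displayed above. To finish that first part I would only observe that the map $\eta\mapsto\eta\wedge\omega$ from $\Omega^1 M$ to $\Omega^3 M$ is injective over a seven-manifold carrying an $\SU(3)$-structure: pointwise, if $\eta\wedge\omega=0$ then contracting with the Reeb vector field $X$ gives $\eta(X)\,\omega=X\hook(\eta\wedge\omega)=0$ (using $X\hook\omega=0$), hence $\eta(X)=0$ since $\omega\neq 0$; thus $\eta$ is a $1$-form on the six-dimensional symplectic space $(\cD_\alpha,\omega)$, where $\eta\mapsto\eta\wedge\omega$ is injective by hard Lefschetz (for $n=3$, even $\eta\mapsto\eta\wedge\omega^2$ is an isomorphism $\Lambda^1\to\Lambda^5$). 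Therefore $d\lambda_1=0$, so $\lambda_1$ is constant on each connected component of $M$ (hence on $M$ if $M$ is connected).

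For the rescaling, the idea is to rescale \emph{only} the contact form $\alpha$. Assuming $\lambda_1\equiv c\neq 0$, I would set $(\tilde\alpha,\tilde\omega,\tilde\psi):=(c^{-1}\alpha,\,\omega,\,\psi)$ and check three things. First, this triple is again an $\SU(3)$-structure: if $(e_1,\dots,e_7)$ is an adapted basis for $(\alpha,\omega,\psi)$ at a point, then $(e_1,\dots,e_6,c\,e_7)$ is an adapted basis for $(\tilde\alpha,\tilde\omega,\tilde\psi)$; note that only an invertible change of frame is needed, so this works for every $c\neq 0$ (the induced metric rescales in the Reeb direction and, if $c<0$, the induced orientation flips, but neither is constrained by the statement). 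Second, $(\tilde\alpha,\tilde\omega,\tilde\psi)$ is hypo, since $d\tilde\omega=d\omega=0$ and $d(\tilde\alpha\wedge\tilde\psi)=c^{-1}d(\alpha\wedge\psi)=0$. Third, it is again of type $2V_1\oplus V_{12}$ with $\lambda_1=1$: since $\tilde\omega=\omega$, $\tilde\psi=\psi$, and $J_{(\tilde\alpha,\psi)}=J_{(\alpha,\psi)}$ (the Reeb field becomes $cX$ but $J$ is unchanged on $\cD_\alpha$ and still annihilates the Reeb line), from $d\tilde\alpha=c^{-1}d\alpha=c^{-1}\lambda_1\,\omega=\omega=\tilde\omega$ together with the rescalings $d\tilde\rho=d\rho$ and $d\hat{\tilde\rho}=d\hat\rho$ one reads off in the normal form of Proposition \ref{pro:inttorshypo} that the new $\beta$- and $\tilde\omega$-components vanish, that the new $\lambda_1$ equals $1$, and that only the new $V_1(\lambda_2)$- and $V_{12}$-parts can survive.

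The argument has no real obstacle; the one point requiring attention is the sign of $c$. The naive move of rescaling $\alpha$, $\omega$, $\psi$ simultaneously by the powers of a positive constant coming from a uniform rescaling of an adapted basis only reaches $c>0$; rescaling $\alpha$ alone by the possibly negative factor $c^{-1}$ avoids this, at the harmless cost of possibly reversing the induced orientation. The remaining checks in the third step amount to tracking how the torsion forms $\beta$, $\tilde\omega$, $\lambda_2$, $\gamma$ of Proposition \ref{pro:inttorshypo} transform under $\alpha\mapsto c^{-1}\alpha$ with $\omega$, $\psi$ held fixed, which is routine.
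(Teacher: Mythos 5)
Your proof is correct and takes essentially the same route as the paper: the constancy of $\lambda_1$ comes from $0=d^2\alpha=d\lambda_1\wedge\omega$ together with the (pointwise Lefschetz) injectivity of wedging a $1$-form annihilating $\cD_\omega$ with $\omega$, which is exactly the computation the paper records in the paragraph preceding the lemma. The normalization by rescaling only $\alpha$ to $\lambda_1^{-1}\alpha$ is the natural one and matches the spirit of the paper's rescaling in Lemma \ref{le:exludeintrinsictorsion}; your check that this preserves the $\SU(3)$-structure, the hypo condition, and the torsion class $2V_1\oplus V_{12}$ is complete.
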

In the particular case of torsion $2V_1$, we say that $(\alpha,\omega,\psi)$ has \emph{invariant intrinsic torsion}. This torsion class as well as the more general class $2V_1\oplus V_{12}$ includes some well-known structures. However, even in the invariant intrinsic torsion case, $\lambda_2$, in contrast to $\lambda_1$ need not to be constant.
\begin{remark}\label{re:EinsteinSasaki}
\begin{itemize}
\item
Even stronger, if $(\alpha,\omega,\psi)$ is of type $V_1(\lambda_2)$, it does not follow that $\lambda_2$ is constant. For a counterexample, take $M=N\times \bR$ with $(N,\omega,\psi)$ being Calabi-Yau and $f:\bR\rightarrow \bR$ a smooth function with non-constant derivative $f'$. Then $(dt,\omega,e^{if(t)}\psi)$ with $t$ being the standard coordinate on $\bR$ is a hypo $\SU(3)$-structure on $M$ of type $V_1(\lambda_2)$ with $d(e^{if(t)}\psi)=if'(t)dt\wedge \psi$, i.e. with $\lambda_2=f'(t)$ being non-constant.
\item
A Sasaki-Einstein structure on a seven-dimensional manifold $M$ can also be seen as a hypo $\SU(3)$-structure $(\alpha,\omega,\psi)\in \Omega^1 M\times \Omega^2 M\times \Omega^3 M$ fulfilling $d\alpha=-2\omega$, $d\psi=4i\alpha\wedge \psi$, cf. \cite{CF}, i.e. as a hypo $\SU(3)$-structure of type $2V_1$ with $\lambda_1=-2$, $\lambda_2=4$. Note that the the solution of the hypo flow with this initial value is given by $(\alpha(t),\omega(t),\psi(t))=((t+1)\alpha,(t+1)^2 \omega,(t+1)^3 \psi)$ and the induced metric on $M\times (-1,\infty)\cong M\times \bR_+$ is the cone metric. Recall that Sasaki-Einstein structures have positive scalar curvature and so we cannot have a left-invariant Sasaki-Einstein structure on a non-compact Lie group. As each Sasaki-Einstein structure in seven dimensions induces a nearly parallel $\G_2$-structure by Lemma \ref{le:hypotococalibrated}, i.e. a cocalibrated $\G_2$-structure with $d\varphi=\lambda \star_{\varphi}\varphi$ for some $\lambda\in \bR\setminus \{0\}$, \cite{FMKS} shows that there is also no left-invariant Sasaki-Einstein structure on a compact seven-dimensional Lie group. Thus, there do not exist Sasaki-Einstein structures on seven-dimensional Lie algebras at all.
\item
In \cite{CF}, a hypo $\SU(3)$-structure $(\alpha,\omega,\psi)\in \Omega^1 M\times \Omega^2 M\times \Omega^3 M$ is called \emph{contact hypo} if $d\alpha=-2\omega$. So contact hypo $\SU(3)$-structures are hypo $\SU(3)$-structures of type $2V_1\oplus V_{12}$ with $\lambda_1=-2$.
\end{itemize}
\end{remark}
In Lemma \ref{le:normalizelambda1} we saw that for hypo $\SU(3)$-structures of type $2V_1\oplus V_{12}$ we can normalize $\lambda_1$ to $1$ as it is constant. More generally, we can do this for arbitrary hypo $\SU(3)$-structures if the invariant intrinsic torsion part is constant.
\begin{lemma}\label{le:exludeintrinsictorsion}
Let $(\alpha,\omega,\psi)$ be a hypo $\SU(3)$-structure such that the invariant part $(\lambda_1,\lambda_2)$ of the intrinsic torsion is constant with
$\lambda_1\lambda_2\neq 0$. Then, for any $(a_1,a_2)\in \bR^2$ with $\sgn(\lambda_1\lambda_2)=\sgn(a_1a_2)$, the triple
$(\tfrac{\lambda_2}{a_2} \alpha,\tfrac{\lambda_1 \lambda_2}{a_1a_2}\omega,(\tfrac{\lambda_1 \lambda_2}{a_1a_2})^{\tfrac{3}{2}}\psi)$ is a hypo $\SU(3)$-structure with invariant intrinsic torsion equal to $(a_1,a_2)$.
In particular, there is no hypo $\SU(3)$-structure with invariant intrinsic torsion with $\lambda_1\lambda_2<0$ on a seven-dimensional Lie algebra $\mfg$.
\end{lemma}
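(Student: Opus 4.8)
The plan is to recognize the triple in the statement as a \emph{constant rescaling} of $(\alpha,\omega,\psi)$ and to track how the intrinsic torsion of Proposition \ref{pro:inttorshypo} transforms under such rescalings. First I would record that, for any constants $s\in\bR\setminus\{0\}$ and $r>0$, the triple $(\alpha',\omega',\psi'):=(s\alpha,\,r\omega,\,r^{3/2}\psi)$ is again a hypo $\SU(3)$-structure on $M$: if $(f^1,\dots,f^7)$ is an adapted coframe for $(\alpha,\omega,\psi)$, then $(\sqrt r\,f^1,\dots,\sqrt r\,f^6,\,s\,f^7)$ is an adapted coframe for $(\alpha',\omega',\psi')$ (the $3$-form $\psi$ is built from the first six legs, which explains the exponent $3/2$), so the common $\GL(7,\bR)$-stabilizer is again $\SU(3)\subseteq\SO(7)$; if $s<0$ the induced orientation flips, but this plays no role below. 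The rescaled triple stays \emph{hypo} because $d\omega'=r\,d\omega=0$ and $d(\alpha'\wedge\psi')=s\,r^{3/2}\,d(\alpha\wedge\psi)=0$, where it is crucial that $s,r$ are constant---this is exactly where the hypothesis that $(\lambda_1,\lambda_2)$ is constant enters, and $\lambda_1\lambda_2\neq0$ guarantees that the scalars chosen below are well defined.

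The second step is to compute the invariant part of the intrinsic torsion of $(\alpha',\omega',\psi')$. The key point is that rescaling $\alpha$ by a nonzero constant and $\psi$ by a positive constant does not alter the induced almost complex structure $J_{(\alpha,\psi)}$ (on $\cD_\alpha$ it depends only on the conformal class of $\psi$, and it kills the same Reeb line), so the $\SU(3)$-type decompositions of forms are literally the same for both structures and Proposition \ref{pro:inttorshypo} applies to $(\alpha',\omega',\psi')$ verbatim. Hence I would simply multiply the three identities of Proposition \ref{pro:inttorshypo} by the appropriate constants, $d\alpha'=s\,d\alpha$, $d\rho'=r^{3/2}d\rho$, $d\hat\rho'=r^{3/2}d\hat\rho$, and re-express the right-hand sides in the normal form of Proposition \ref{pro:inttorshypo} relative to $(\alpha',\omega',\psi')$; reading off the $\bR\cdot\omega'$-component of $d\alpha'$ and the $\bR\cdot(\alpha'\wedge\hat\rho')$-component of $d\rho'$ gives $\lambda_1'=s\lambda_1/r$ and $\lambda_2'=\lambda_2/s$ (while $\beta'=\beta$, $\tilde\omega'=s\tilde\omega$, $\gamma'=(r^{3/2}/s)\gamma$). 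Substituting $s:=\lambda_2/a_2$ and $r:=\lambda_1\lambda_2/(a_1a_2)$---both nonzero since $\lambda_1\lambda_2\neq0$ and $a_1a_2\neq0$, with $r>0$ precisely by the sign hypothesis $\sgn(\lambda_1\lambda_2)=\sgn(a_1a_2)$---yields $\lambda_1'=a_1$, $\lambda_2'=a_2$, which is the first assertion.

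For the final statement I would argue by contradiction. On a seven-dimensional Lie algebra all the torsion quantities lie in $\Lambda^0\mfg^*=\bR$, so a hypo $\SU(3)$-structure has constant invariant intrinsic torsion and the first part applies. Suppose such a structure has \emph{invariant} intrinsic torsion (type $2V_1$, i.e.\ $\beta=\tilde\omega=\gamma=0$) with $\lambda_1\lambda_2<0$. Since in the formulas above $\beta'=\beta$, $\tilde\omega'=s\tilde\omega$ and $\gamma'=(r^{3/2}/s)\gamma$, the rescaling still has $\beta'=\tilde\omega'=\gamma'=0$, so it is again of type $2V_1$; choosing $(a_1,a_2)=(-2,4)$, which is admissible as $\sgn((-2)\cdot4)=-1=\sgn(\lambda_1\lambda_2)$, produces a hypo $\SU(3)$-structure on the same Lie algebra with $d\alpha=-2\omega$ and $d\psi=4i\,\alpha\wedge\psi$, i.e.\ a Sasaki--Einstein structure by Remark \ref{re:EinsteinSasaki}---contradicting the non-existence of Sasaki--Einstein structures on seven-dimensional Lie algebras proved there. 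The only part of the argument needing genuine care is Step~1--2: verifying that $(s\alpha,r\omega,r^{3/2}\psi)$ really is an $\SU(3)$-structure and, crucially, that it has the \emph{same} underlying almost complex structure, so that the intrinsic-torsion formulas transport without change; the rest is substitution.
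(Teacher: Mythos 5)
Your proposal is correct and follows essentially the same route as the paper: the first assertion is the "direct computation" of how $(\lambda_1,\lambda_2)$ transforms under the constant rescaling $(s\alpha,r\omega,r^{3/2}\psi)$, and the second assertion is obtained exactly as in the paper by rescaling to $(a_1,a_2)=(-2,4)$ and invoking the non-existence of Sasaki--Einstein structures on seven-dimensional Lie algebras from Remark \ref{re:EinsteinSasaki}. Your verification that the rescaling preserves the almost complex structure and hence the type decomposition, so that $\lambda_1'=s\lambda_1/r$ and $\lambda_2'=\lambda_2/s$, correctly fills in the computation the paper leaves implicit.
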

\begin{proof}
The first statement is a direct computation. But then the existence of a hypo $\SU(3)$-structure on $\mfg$ with invariant intrinsic torsion $(\lambda_1,\lambda_2)$
with $\lambda_1\lambda_2<0$ implies that $\mfg$ admits a Sasaki-Einstein structure in contradiction to Remark \ref{re:EinsteinSasaki}.
\end{proof}
So any Lie algebra $\mfg$ admitting a hypo $\SU(3)$-structure of type $2V_1\oplus V_{12}$ with $\lambda_1\lambda_2\neq 0$ also admits a contact hypo $\SU(3)$-structure. Conti and Fino \cite{CF} classified semidirect sums of the form $\bR^4\rtimes H$ for $H$ being a three-dimensional solvable Lie algebra which admit contact hypo $\SU(3)$-structure, including some with invariant intrinsic torsion. We like to give here two others examples of contact-hypo $\SU(3)$-structures with invariant intrinsic torsion not included in \cite{CF}.
\begin{example}\label{ex:invinttors}
In both cases, we define $\mfg$ dually by the differentials $de^1,\ldots,de^7$ of a basis $e^1,\ldots,e^7$ of $\mfg^*$.
The contact hypo $\SU(3)$-structure with invariant intrinsic torsion $(\lambda_1,\lambda_2)=(-2,-4)$ is then given by
$(e^7,e^{12}+e^{34}+e^{56},e_{\bC}^1\wedge e_{\bC}^2\wedge e_{\bC}^3)$.
\begin{itemize}
\item
The first Lie algebra $\mfg$ is defined by
\begin{equation*}
\begin{split}
de^1&=2\sqrt{2}e^{12},\quad de^2=0,\quad de^3=\tfrac{2}{3}\sqrt{6}\left(e^{34}+e^{56}\right),\quad de^4=-\tfrac{2}{3}\sqrt{6}\left(e^{34}+e^{56}\right) \\
de^5&=-2 \sqrt{2} e^{16}-\tfrac{\sqrt{6}}{3}\left(e^{35}+3 e^{36}+e^{45}-3 e^{46}\right)+4 e^{67} \\
de^6&=2\sqrt{2} e^{15}+\tfrac{\sqrt{6}}{3}\left(3e^{35}- e^{36}-3e^{45}-e^{46}\right)-4 e^{57}\\
de^7&=-2\left(e^{12}+e^{34}+e^{56}\right).
\end{split}
\end{equation*}
One has $\mfg=\spa{\tfrac{\sqrt{6}}{6}\left(e_3-e_4\right)-\tfrac{1}{2}  e_7,\tfrac{1}{2}  e_6,-\tfrac{1}{2}  e_5,\tfrac{\sqrt{6}}{4}\left(e_3+e_4\right),\tfrac{1}{4}  e_7} \oplus  \linebreak\spa{\tfrac{1}{2}e_1-\tfrac{\sqrt{2}}{4}e_7,\tfrac{1}{2\sqrt{2}}e_2}\cong A_{5,37}\oplus \mfr_2$, where $A_{5,37}$ is the five-dimensional solvable Lie algebra with the same name in \cite{PSWZ} and $\mfr_2$ is the non-Abelian two-dimensional Lie algebra. So $\mfg$ is solvable but cannot be of the form $\bR^4\rtimes \mfh$ for some solvable three-dimensional Lie algebra $\mfg$ as $\mfg$ does not contain an Abelian ideal of dimension four. Hence, it is not contained in the examples of \cite{CF}.
\item
The second Lie algebra $\mfg$ is defined by
\begin{equation*}
\begin{split}
de^1& =2\sqrt{2}e^{12},\quad de^2=0,\quad de^3=2\sqrt{2} e^{34},\quad de^4=0,\\
de^5&=-2\sqrt{2}\left(e^{16}+e^{36}\right)+4 e^{67},\quad de^6=2\sqrt{2}\left(e^{15}+e^{35}\right)-4 e^{57}, \\
de^7& = -2\left(e^{12}+e^{34}+e^{56}\right).
\end{split}
\end{equation*}
$\mfg$ is not solvable but still decomposable with
$\mfg=\spa{e_2,e_1-\tfrac{\sqrt{2}}{2}e_7}\oplus \spa{e_4,e_3-\tfrac{\sqrt{2}}{2}e_7}\oplus \spa{e_5,e_6,e_7}\cong \mfr_2\oplus \mfr_2\oplus \mathfrak{sl}(2,\bR)$.
\end{itemize}
\end{example}
\section{The left-invariant hypo flow}\label{sec:leftinvhypoflow}
\subsection{Torsion classes invariant under the flow}
In this subsection, we consider several torsion classes on Lie algebras which stay invariant under the hypo flow and simplify the hypo flow equations in these cases. This simplification will turn out to be very useful for the determination of the holonomy of the Riemannian manifolds obtained by the hypo flow. We start with the broad class $2V_1\oplus V_8\oplus V_{12}$, i.e. only the $V_6$-part is zero. To show invariance of this class under the hypo flow and to simplify the flow equations in this case, we will need the following
\begin{lemma}\label{le:flowofhat}
Let $\mfg$ be a seven-dimensional Lie algebra with vector space splitting $\mfg=\spa{X}\oplus V$, $\dim(V)=6$ and $V$ being oriented,
such that $[X,V]\subseteq V$. Let $I$ be an open interval, $f:I\rightarrow \bR$ be smooth and $(\tau_t+i\hat{\tau}_t)_{t\in I}$ be
a smooth one-parameter family of complex $3$-forms on $V$ with model tensor $e^1_{\bC}\wedge e^2_{\bC}\wedge e^3_{\bC}$ fulfilling
\begin{equation*}
\tau'_t=f(t) X\hook d\hat{\tau}_t
\end{equation*}
for all $t\in I$. If, furthermore, $d(\Lambda^6 V^*)=0$ and $d\hat{\tau}_t\wedge \tau_t=0$ for all $t\in I$, then:
\begin{enumerate}
\item[(a)]
The induced family $(J_t)_{t\in I}$ of almost complex structure on $V$ fulfills
\begin{equation*}
J_t'v=-f(t)\left(J_t[X,J_t v]+[X,v]\right)
\end{equation*}
for all $v\in V$ and all $t\in I$.
\item[(b)]
For all $t\in I$ we have
\begin{equation*}
\hat{\tau}_t'=-f(t) X\hook d\tau_t.
\end{equation*}
\end{enumerate}
\end{lemma}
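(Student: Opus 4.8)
The central idea is that the two statements (a) and (b) are really facts about the linear algebra of stable $3$-forms in six dimensions, combined with one differentiation in $t$. I would work throughout on the fixed six-dimensional oriented vector space $V$, using the smooth family $\psi_t:=\tau_t+i\hat\tau_t$ of $(3,0)$-type (with respect to $J_t$) forms. The key input is Hitchin's description of the map $\tau\mapsto\hat\tau$ for stable $3$-forms: if $\tau$ has model tensor $\re(e^1_{\bC}\wedge e^2_{\bC}\wedge e^3_{\bC})$ then $\hat\tau=\hat\rho(\tau)$ where $\hat\rho$ is the (homogeneous degree one, nonlinear) ``Hitchin dual'' map, and the derivative of $\hat\rho$ at $\tau$ in a direction $\sigma$ is computable in terms of $J_\tau$; in particular one has the pointwise identity $D_\tau\hat\rho(J_\tau^{\,*}\sigma) = -J_\tau^{\,*}(D_\tau\hat\rho(\sigma))$ type relations, and $D_\tau\hat\rho(\tau)=\hat\tau$, $D_\tau\hat\rho(\hat\tau)=-\tau$ are the relations one really needs. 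I would first record these pointwise algebraic facts (they are standard, cf. the Hitchin-flow literature and \cite{SH}), then simply differentiate.

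For part (a): the almost complex structure $J_t$ is a pointwise-algebraic function of $\tau_t$ alone (the stabilizer of the model tensor of $\tau$ is $\SL(3,\bC)$, as noted in Definition~\ref{def:saHs}), so $J'_t$ is $D_{\tau_t}J(\tau'_t)$. Using the hypothesis $\tau'_t=f(t)\,X\hook d\hat\tau_t$ and the formula for the derivative of $\tau\mapsto J_\tau$ one gets an expression for $J'_t v$. To identify it with the claimed $-f(t)(J_t[X,J_tv]+[X,v])$ I would exploit that $X\hook d\hat\tau_t$ can be rewritten using $[X,\cdot]$: since $d(\Lambda^6V^*)=0$ and $[X,V]\subseteq V$, for any $k$-form $\nu\in\Lambda^kV^*$ one has $X\hook d\nu = -\mathcal L_X\nu = (\text{action of }-[X,\cdot]\text{ on }\nu)$ when restricted appropriately; more precisely the Chevalley–Eilenberg differential on $\mfg$ restricted to $\Lambda^\bullet V^*$ decomposes as $d = d_V + X^\flat\wedge(-\mathcal L_X)$, and contracting with $X$ kills $d_V$. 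Thus $X\hook d\hat\tau_t = -\rho_*(X)\,\hat\tau_t$ where $\rho_*(X)$ denotes the natural derivation action of $\ad(X)|_V$ on forms. Plugging this into the derivative-of-$J$ formula and using that $\hat\tau_t$ is the Hitchin dual of $\tau_t$ (so infinitesimal $\SL(3,\bC)$-covariance applies) yields exactly the stated formula. The bookkeeping of how $\ad(X)|_V$ acts on a $(3,0)$-form versus on $J$ is where I'd be most careful, but it is the same computation one does when showing the Hitchin flow preserves cocalibratedness.

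For part (b): differentiate $\hat\tau_t=\hat\rho(\tau_t)$ to get $\hat\tau'_t = D_{\tau_t}\hat\rho(\tau'_t) = f(t)\,D_{\tau_t}\hat\rho(X\hook d\hat\tau_t)$. Now rewrite $X\hook d\hat\tau_t = -\rho_*(X)\hat\tau_t$ as above; since $\rho_*(X)$ acts on the $(3,0)$-form $\psi_t=\tau_t+i\hat\tau_t$ by some complex scalar plus a ``type-changing'' piece, and since $D_{\tau_t}\hat\rho$ intertwines the complex structure on the space of stable forms in the sense that $D_\tau\hat\rho\circ(\text{real part of a }(3,0)\text{-deformation}) = (\text{imaginary part})$, one obtains $D_{\tau_t}\hat\rho(\rho_*(X)\hat\tau_t) = \rho_*(X)\tau_t = -X\hook d\tau_t$ (the last equality again by $d=d_V + X^\flat\wedge(-\mathcal L_X)$ and $\ \hook X$). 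Here the hypothesis $d\hat\tau_t\wedge\tau_t=0$ is exactly what guarantees that $\rho_*(X)\hat\tau_t$ has no $(0,3)$-component obstructing this identification — equivalently it says the relevant deformation stays in the $\SL(3,\bC)$-admissible directions so that $D_\tau\hat\rho$ behaves like multiplication by $i$. Combining, $\hat\tau'_t = -f(t)\,X\hook d\tau_t$, as claimed.

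The main obstacle I anticipate is purely notational: cleanly encoding the derivative of the Hitchin dual map and the action of $\ad(X)|_V$ on the real and imaginary parts of a $(3,0)$-form, so that the $(3,0)+(0,3)$ versus $(2,1)+(1,2)$ decompositions are tracked correctly and the roles of the hypotheses $d(\Lambda^6V^*)=0$ (kills $d_V$ on top forms, needed so that contracting with $X$ really produces $-\mathcal L_X$) and $d\hat\tau_t\wedge\tau_t=0$ (pins down the type of the deformation) are transparent. Once the pointwise linear-algebra lemma for stable $3$-forms is stated, both (a) and (b) are one-line differentiations; I would therefore isolate that lemma first and reduce everything to it rather than manipulating the $d\alpha,d\rho,d\hat\rho$ formulas of Proposition~\ref{pro:inttorshypo} directly.
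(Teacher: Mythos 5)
Your strategy --- rewrite $X\hook d\nu$ as the derivation action of $A:=\ad(X)|_V$ on $\nu\in\Lambda^3V^*$ and reduce (a) and (b) to pointwise facts about the derivatives of the Hitchin dual $\tau\mapsto\hat\rho(\tau)$ and of $\tau\mapsto J_\tau$ --- is genuinely different from the paper's proof, which is a direct Cartan-calculus computation built on the contraction identities $J_tv\hook\hat\tau_t=v\hook\tau_t$, $v\hook\hat\tau_t\wedge\tau_t=-v\hook\tau_t\wedge\hat\tau_t$, $J_tv\hook\phi(\tau_t)=v\hook\tau_t\wedge\tau_t$ and which consumes the hypotheses $d\phi(\tau_t)=0$ and $d\hat\tau_t\wedge\tau_t=0$ along the way. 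Your route can be made to work, and arguably more cleanly: the $\GL^+(V)$-equivariance $\hat\rho(g.\tau)=g.\hat\rho(\tau)$ and $J_{g.\tau}=gJ_\tau g^{-1}$ gives $D_\tau\hat\rho(A\cdot\tau)=A\cdot\hat\tau$ and $D_\tau J(A\cdot\tau)=[A,J_\tau]$ for \emph{every} $A\in\mathfrak{gl}(V)$, and one then rewrites the deformation $A\cdot\hat\tau$ as $B\cdot\tau$ for a suitable $B$ and computes $B\cdot\hat\tau$ and $[B,J_\tau]$; done this way neither $d(\Lambda^6V^*)=0$ nor $d\hat\tau_t\wedge\tau_t=0$ is even used. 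But this linear-algebra step for a general real endomorphism $A$ --- which is neither complex-linear nor traceless, so that $A\cdot\hat\tau$ has components in all of $\bR\tau\oplus\bR\hat\tau\oplus[[\Lambda^{2,1}]]$ --- is exactly the content of the lemma, and the proposal neither states it correctly nor proves it. For (a) you only gesture at an unstated ``derivative-of-$J$ formula'' and at ``$\SL(3,\bC)$-covariance'', which cannot suffice because $\ad(X)|_V$ does not lie in $\mathfrak{gl}(3,\bC)$.

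Worse, the one identity you do commit to for (b), $D_{\tau_t}\hat\rho(\rho_*(X)\hat\tau_t)=\rho_*(X)\tau_t$, is false: if $\ad(X)|_V$ is a nonzero multiple of the identity, then $\rho_*(X)$ acts on $3$-forms as a nonzero scalar $c$, and Euler's relation $D_\tau\hat\rho(\tau)=\hat\tau$ together with $D_\tau\hat\rho(\hat\tau)=-\tau$ gives $D_\tau\hat\rho(c\hat\tau)=-c\tau$, not $c\tau$. The correct identity carries a minus sign, and with the version you state your own chain of equalities returns $\hat\tau_t'=+f(t)\,X\hook d\tau_t$ instead of $-f(t)\,X\hook d\tau_t$; since this sign is the whole point of the lemma (it is what makes the resulting $4$-form on $M\times I$ closed), it is not a cosmetic slip. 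Relatedly, your reading of the hypothesis $d\hat\tau_t\wedge\tau_t=0$ is off: because $\sigma\wedge\tau_t=0$ for every $\sigma\in[[\Lambda^{2,1}]]$ and $\tau_t\wedge\tau_t=0$, that condition says precisely that $X\hook d\hat\tau_t$ has vanishing $\hat\tau_t$-component (equivalently $\tr(\ad(X)|_V)=0$); it is not about a ``$(0,3)$-component'' and does not make $D_\tau\hat\rho$ ``behave like multiplication by $i$''. Invoking it to justify the key step is a sign that the pointwise lemma you propose to isolate first has not actually been checked.
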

\begin{proof}
\begin{enumerate}
\item[(a)]
Recall from Definition \ref{def:saHs} that $\tau_t$ defines for any $t\in I$ a volume form $\phi(\tau_t)=\tfrac{1}{2}\hat{\tau}_t\wedge \tau_t$
and a complex structure $J_t$. Then we have, cf., e.g., \cite[Chapter 1]{SH}, the equalities $d\phi_{\tau_t}(\alpha)=\hat{\tau}_t\wedge \alpha$, 
$J_t v\hook \hat{\tau}_t=v\hook \tau_t$, $J_t v\hook \tau_t=-v\hook\hat{\tau}_t$, $v\hook \hat{\tau}_t\wedge \tau_t=-v\hook \tau_t\wedge \hat{\tau}_t$ and
$J_t v\hook \phi(\tau_t)=v\hook \tau_t\wedge \tau_t=v\hook \hat{\tau}_t\wedge \hat{\tau}_t$ for all $v\in V$ and all $\alpha\in \Lambda^3 V^*$.
Furthermore, by our assumptions $d\phi(\tau_t)=0$ and so
\begin{equation*}
\begin{split}
J_t'v\hook \phi(\tau_t)&=-J_t v\hook d\phi_{\tau_t}(\tau'_t)+v\hook \tau'_t\wedge \tau_t+v\hook \tau_t\wedge \tau_t'\\
&=-f(t)\Bigl(J_t v\hook \left(\hat{\tau}_t\wedge X\hook d\hat{\tau}_t\right)-(v\hook X\hook d\hat{\tau}_t)\wedge \tau_t-v\hook \tau_t\wedge X\hook d\hat{\tau}_t\Bigr)\\
&=-f(t)X\hook \Bigl(J_t v\hook  d\hat{\tau}_t \wedge \hat{\tau}_t+v\hook d\hat{\tau}_t\wedge \tau_t\Bigr)\\
&=-f(t)X\hook\Bigl(\cL_{J_t v} \hat{\tau}_t\wedge \hat{\tau}_t-d(v\hook \tau_t)\wedge \hat{\tau}_t-d\hat{\tau}_t\wedge v\hook \tau_t \Bigr)\\
&=-f(t)X\hook\bigl(\cL_{J_t v} \hat{\tau}_t\wedge \hat{\tau}_t+d(v\hook \phi(\tau_t))\bigr)\\
&=f(t)\bigl([\cL_{J_t v},X\hook]\hat{\tau}_t\wedge \hat{\tau}_t+[\cL_v,X\hook]\phi(\tau_t)\bigr)\\
&=-f(t)\left(J_t[X,J_t v]+[X,v]\right)\hook \phi(\tau_t),
\end{split}
\end{equation*}
i.e. $J_t'v=-f(t)\left(J_t[X,J_t v]+[X,v]\right)$, for all $v\in V$.
\item[(b)]
Using the result of (a), we get
\begin{equation*}
\begin{split}
\hat{\tau}_t'(v_1,v_2,v_3)&=-\bigl(\tau_t(J_t v_1,v_2,v_3)\bigr)'=-\tau'_t(J_t v_1,v_2,v_3)-\tau_t (J_t' v_1,v_2 v_3)\\
&=-f(t)\Bigl((X\hook d\hat{\tau_t})(J_t v_1,v_2,v_3)-\tau_t(J_t[X,J_t v_1]+[X,v_1],v_2,v_3)\Bigr) \\
&=f(t)\Bigl(\hat{\tau}_t(J_t v_1,[X,v_2],v_3)+\hat{\tau_t}(J_t v_1,v_2,[X,v_3])+\tau_t([X,v_1],v_2,v_3)\Bigr) \\
&=f(t)\Bigl(\tau_t(v_1,[X,v_2],v_3)+\tau_t(v_1,v_2,[X,v_3])+\tau_t([X,v_1],v_2,v_3)\Bigr)\\
&=-f(t) (X\hook d\tau_t)(v_1,v_2,v_3)
\end{split}
\end{equation*}
for any $v_1,v_2,v_3\in V$.
\end{enumerate}
\end{proof}
Next, we prove the announced invariance and simplification result for the 
hypo flow on a Lie algebra $\mfg$ with starting value in the class
$2V_1\oplus V_8\oplus V_{12}$. Thereto, note that for $\nu\in \Lambda^2 \mfg^*
$ and $\tau\in\Lambda^3 \mfg^*$ having common one-dimensional kernel $\cD_{
\omega}$
and $\nu$ having model tensor $e^{12}+e^{34}+e^{56}$ and $\tau$ having model 
tensor $\left(e^1_{\bC}\wedge e^3_{\bC}\wedge e^3_{\bC}\right)$, Definition
\ref{def:saHs} allows to define (non-zero) six-forms $\phi(\nu)$ and $\phi(
\tau)$ on $\mfg^*$ annihilating $\cD_{\omega}$. As the space of such six-
forms is one-dimensional, the quotient $\tfrac{\phi(\tau)}{\phi(\nu)}$
is a well-defined real number. Note further that for an $\SU(3)$-structure $(
\alpha,\omega,\psi)$, we have $\phi(\psi)=2\phi(\omega)$. With these 
clarifications, we can prove now
\begin{proposition}\label{pro:hypoflownotorsionV6}
Let $(\alpha_0,\omega_0,\psi_0)$ be a hypo $\SU(3)$-structure of class $2V_1
\oplus V_8\oplus V_{12}$ on a seven-dimensional Lie algebra $\mfg$.
Then the maximal solution $(\alpha(t),\omega(t),\linebreak \psi(t))_{t\in I}$ of the 
hypo flow with initial value $(\alpha_0,\omega_0,\psi_0)$ at $t=0$ is given by
\begin{equation}\label{eq:sol2V1V8V12}
(\alpha(t),\omega(t),\psi(t))=\left(x'(t) \alpha_0, \omega_0-x(t) d\alpha_0, 
\frac{\tau(t)+i\hat{\tau}(t)}{x'(t)}\right)
\end{equation}
where $x:I\rightarrow \bR$, $\tau:I\rightarrow \Lambda^3 \mfg^*$ is the 
maximal solution of the initial value problem
\begin{equation}\label{eq:simplify2V1V8V12}
\begin{split}
x'(t)=&\, \sqrt{\frac{\phi(\tau(t))}{2\phi(\omega_0-x(t)d\alpha_0)}}, \quad 
\tau'(t)=\frac{X_0\hook d\hat{\tau}(t)}{\sqrt{\frac{\phi(\tau(t))}{2\phi(
\omega_0-x(t)d\alpha_0)}}}, \\
x(0)= &\, 0,\qquad \qquad \quad \tau(0)=\rho_0.
\end{split}
\end{equation}
In particular, $(\alpha(t),\omega(t),\psi(t))$ is of class $2V_1\oplus V_8
\oplus V_{12}$ for all $t\in I$.
\end{proposition}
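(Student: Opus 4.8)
The plan is to build the family \eqref{eq:sol2V1V8V12} explicitly out of a solution of the ODE system \eqref{eq:simplify2V1V8V12}, to check that it is a hypo $\SU(3)$-structure of class $2V_1\oplus V_8\oplus V_{12}$ satisfying the hypo flow equations \eqref{eq:hypoflowSU3}, and then to invoke uniqueness of the hypo flow (left-invariant structures on Lie algebras are real-analytic, so the uniqueness recalled after Proposition \ref{pro:hypoflow} applies) to identify it with the maximal solution.

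I would begin by reading off the consequences of the torsion class. By Proposition \ref{pro:inttorshypo}, class $2V_1\oplus V_8\oplus V_{12}$ means $\beta=0$ in \eqref{eq:extderhypo}; thus $d\omega_0=0$, $X_0\hook d\alpha_0=0$, and $d\rho_0,d\hat\rho_0\in\alpha_0\wedge\Lambda^3\mfg^*$. Put $V:=\cD_{\alpha_0}$. Then $X_0\hook d\alpha_0=0$ gives $[X_0,V]\subseteq V$ and $\cL_{X_0}\alpha_0=0$, so $\cL_{X_0}$ preserves the splitting $\mfg^*=\bR\alpha_0\oplus V^*$ and the induced splittings $\Lambda^k\mfg^*=\alpha_0\wedge\Lambda^{k-1}V^*\oplus\Lambda^kV^*$; also $\Lambda^6V^*=\bR\,\omega_0^3$ with $d\omega_0^3=3\omega_0^2\wedge d\omega_0=0$. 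These are the structural hypotheses of Lemma \ref{le:flowofhat}. Let $(x,\tau)$ be the maximal solution of \eqref{eq:simplify2V1V8V12}, living on the interval $I$ where $\omega_0-x(t)d\alpha_0$ and $\tau(t)$ retain non-degenerate model tensors (so $\phi>0$ and $f(t):=1/x'(t)$ is defined), let $\hat\tau(t)$ and $J_t$ be the form and almost complex structure on $V$ induced by $\tau(t)$, and define $(\alpha(t),\omega(t),\psi(t))$ by \eqref{eq:sol2V1V8V12}. Since $\dot\tau(t)=f(t)\,X_0\hook d\hat\tau(t)$ one has $X_0\hook\tau(t)=X_0\hook\rho_0=0$, hence $\tau(t),\hat\tau(t)\in\Lambda^3V^*$; the normalization \eqref{eq:normalization} for $(\omega(t),\psi(t))$ is exactly the first line of \eqref{eq:simplify2V1V8V12} together with $\phi(c\psi)=c^2\phi(\psi)$; and $d\omega(t)=0$ (as $d\omega_0=0$).

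The heart of the proof is showing that throughout $I$ the three pairs of forms
\[
\bigl((d\tau(t))_V,\,(d\hat\tau(t))_V\bigr),\qquad\bigl(d\alpha_0\wedge\tau(t),\,d\alpha_0\wedge\hat\tau(t)\bigr),\qquad\bigl(\omega(t)\wedge\tau(t),\,\omega(t)\wedge\hat\tau(t)\bigr)
\]
vanish identically, where $(\,\cdot\,)_V$ denotes the $\Lambda^kV^*$-component in the splitting above. Granting this, $d\tau(t),d\hat\tau(t)\in\alpha_0\wedge\Lambda^3\mfg^*$ and $\omega(t)\wedge\psi(t)=0$ (equivalently $\omega(t)$ is of type $(1,1)$ for $J_t$), so $(\alpha(t),\omega(t),\psi(t))$ is a hypo $\SU(3)$-structure, automatically of class $2V_1\oplus V_8\oplus V_{12}$ since $\beta(t)=X(t)\hook d\alpha(t)=X_0\hook d\alpha_0=0$. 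To prove the vanishing, one uses Lemma \ref{le:flowofhat}, which supplies the complementary evolution $\dot{\hat\tau}(t)=-f(t)\,X_0\hook d\tau(t)$; combining it with $\dot\tau(t)=f(t)\,X_0\hook d\hat\tau(t)$, $\dot\omega(t)=-x'(t)d\alpha_0$, $d(X_0\hook\eta)=\cL_{X_0}\eta$, and $\cL_{X_0}d\alpha_0=\cL_{X_0}\omega(t)=0$ (from $d\omega(t)=0=X_0\hook\omega(t)=X_0\hook d\alpha_0$), each pair $(U(t),W(t))$ is seen to solve the homogeneous linear system $\dot U=f\,\cL_{X_0}W$, $\dot W=-f\,\cL_{X_0}U$ — for the third pair after using that the second already vanishes, which cancels the $x'(t)d\alpha_0\wedge\tau(t)$ term in $\dot U$. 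At $t=0$ all six forms vanish: the first pair because $d\rho_0,d\hat\rho_0\in\alpha_0\wedge\Lambda^3\mfg^*$, the others because $\omega_0,\tilde\omega_0$ are of type $(1,1)$ and $\rho_0,\hat\rho_0$ of type $(3,0)+(0,3)$ for $J_0$ (as $(\alpha_0,\omega_0,\psi_0)$ is hypo of this class). Uniqueness for linear ODEs then forces all three pairs to vanish on $I$. The one loose end — the hypothesis $d\hat\tau(t)\wedge\tau(t)=0$ of Lemma \ref{le:flowofhat} — holds at $t=0$ and, by Proposition \ref{pro:inttorshypo}, automatically whenever the family is a hypo $\SU(3)$-structure, so it is carried along $I$ by a short continuity argument run in parallel with the above.

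Finally the flow equations \eqref{eq:hypoflowSU3} drop out: the first is $\dot\omega(t)=-x'(t)d\alpha_0=-d(x'(t)\alpha_0)=-d\alpha(t)$, and since $\alpha(t)\wedge\psi(t)=\alpha_0\wedge(\tau(t)+i\hat\tau(t))$ the second reads $\alpha_0\wedge(\dot\tau(t)+i\dot{\hat\tau}(t))=f(t)\bigl(d\hat\tau(t)-i\,d\tau(t)\bigr)$, which is immediate from $\dot\tau(t)=f(t)X_0\hook d\hat\tau(t)$, $\dot{\hat\tau}(t)=-f(t)X_0\hook d\tau(t)$ and $d\tau(t)=\alpha_0\wedge(X_0\hook d\tau(t))$, $d\hat\tau(t)=\alpha_0\wedge(X_0\hook d\hat\tau(t))$. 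Hence \eqref{eq:sol2V1V8V12} solves the hypo flow with the prescribed initial value; by uniqueness it is the maximal solution, the two maximal intervals agreeing because both terminate exactly when $\omega$ or $\tau$ degenerates. The main obstacle is the heart-of-the-proof step: a priori the hypo flow is genuinely nonlinear and there is no reason the $\SU(3)$-structure or its torsion class should persist; it is precisely the reduction furnished by Lemma \ref{le:flowofhat} — turning the evolution of $\tau$ and $\hat\tau$ into an $\cL_{X_0}$-rotation — that converts all of these persistence questions into homogeneous linear ODEs with vanishing initial data.
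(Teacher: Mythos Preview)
Your overall architecture is right, and the verification of the flow equations at the end is clean. But the ``continuity argument run in parallel'' for the hypothesis $d\hat\tau(t)\wedge\tau(t)=0$ of Lemma~\ref{le:flowofhat} hides a genuine circularity that you have not closed. Your derivation of the coupled system $\dot U=f\,\cL_{X_0}W$, $\dot W=-f\,\cL_{X_0}U$ uses Lemma~\ref{le:flowofhat}(b) for $\dot{\hat\tau}$; that lemma needs $d\hat\tau(t)\wedge\tau(t)=0$, which in turn you deduce from the structure being a hypo $\SU(3)$-structure (citing Proposition~\ref{pro:inttorshypo}), which is exactly what the vanishing of your three pairs is meant to prove. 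An open--closed bootstrap does not obviously close this loop: vanishing of $d\hat\tau\wedge\tau$ is a closed condition, and you need it on an \emph{open} interval to feed Lemma~\ref{le:flowofhat}.

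The paper breaks the circle by a different ordering. First it shows directly, without Lemma~\ref{le:flowofhat}, that the ODE vector field $\nu\mapsto X_0\hook d\hat\nu$ is tangent to the subspace $V=\{\nu:\ X_0\hook\nu=0,\ \omega_0\wedge\nu=d\alpha_0\wedge\nu=0\}$: if $\beta\in\{\omega_0,d\alpha_0\}$ and $\beta\wedge\nu=0$, then $\beta$ is $(1,1)$ for $J_\nu$, hence also $\beta\wedge\hat\nu=0$, and so $\beta\wedge(X_0\hook d\hat\nu)=X_0\hook d(\beta\wedge\hat\nu)=0$. This already gives $\omega(t)\wedge\tau(t)=0$ and the $\SU(3)$-structure conditions for all $t$. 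Then $d\omega(t)=0$ and the intrinsic torsion decomposition of Proposition~\ref{pro:inttorsall} (not \ref{pro:inttorshypo}) force $d\hat\rho(t)\wedge\rho(t)=0$, hence $d\hat\tau(t)\wedge\tau(t)=0$, \emph{before} any appeal to Lemma~\ref{le:flowofhat}. Only now is Lemma~\ref{le:flowofhat} invoked, and the remaining invariance (your first pair, $\alpha_0\wedge d\tau=\alpha_0\wedge d\hat\tau=0$) follows. In your language: the equation $\dot U=f\,\cL_{X_0}W$ already holds from the defining ODE for $\tau$ alone, and the algebraic fact $U=0\Rightarrow W=0$ means $\{U=0\}$ is invariant --- you never need $\dot W$ to establish this, and so you never need Lemma~\ref{le:flowofhat} at this stage.
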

\begin{proof}
First of all note that Equation \eqref{eq:simplify2V1V8V12} has a unique 
maximal solution \linebreak $(x(t),\tau(t))_{t\in I}$ with $\tau(t)$
having model tensor $\re\left(e^1_{\bC}\wedge e^3_{\bC}\wedge e^3_{\bC}\right)
$ as this is an open condition by Remark \ref{re:stable}. Moreover, this 
solution stays in the subspace $V:=\{\left.\nu\in \Lambda^3\mfg^*\right|X_0
\hook \nu=0,\omega_0\wedge \nu=d\alpha_0\wedge \nu=0\}$:

Namely, let $\beta\in \{\omega_0,d\alpha_0\}$ and let $\nu\in V$ have model tensor $\re\left(e^1_{\bC}\wedge e^3_{\bC}\wedge e^3_{\bC}\right)$. Then $\beta\wedge \nu=0$ implies that $\beta$ is of type $(1,1)$ with respect to the induced almost complex structure $J_{\nu}$ on $\ker(\alpha_0)$. Hence, also $\beta\wedge \hat{\nu}=0$. But so, since $d\beta=0$ and $X_0\hook \beta=0$, we get
\begin{equation*}
\beta\wedge X_0\hook d\hat{\nu}=X_0\hook (\beta\wedge d\hat{\nu})=X_0\hook d(\beta\wedge \hat{\nu})=0.
\end{equation*}
Now $\omega'(t)=-x'(t)d\alpha_0=-d\alpha(t)$ and, setting $\rho(t):=\frac{\tau(t)}{x'(t)}$, we get $\omega(t)\wedge \rho(t)=0$ as $\tau(t)$ stays in $V$ and
\begin{equation*}
\phi(\rho(t))=\frac{\phi(\tau(t))}{(x'(t))^2}=\frac{\phi(\tau(t))}{\frac{\phi(\tau(t))}{2\phi(\omega_0-x(t)d\alpha_0)}}=2\phi(\omega_0-x(t)d\alpha_0)=2\phi(\omega(t)).
\end{equation*}
So Remark \ref{re:stable} shows that $(\omega(t),\rho(t))_{t\in I}$ is a one-parameter family of special almost Hermitian structures on $\cD_{\alpha_0}$ for all
$t\in I$. Hence, $(\alpha(t),\omega(t),\psi(t))$ is an $\SU(3)$-structure with $\cD_{\alpha(t)}=\cD_{\alpha_0}$, $\cD_{\omega(t)}=\cD_{\omega_0}$,
$d\omega(t)=0$ and $\omega'(t)=-d\alpha(t)$ for all $t\in I$. Thus, by Proposition \ref{pro:inttorsall} and the discussion directly after this proposition,
$d\hat{\tau}(t)\wedge \tau(t)=0$ for all $t\in I$. Moreover, $d(\omega_0^3)=0$, i.e. $d(\Lambda^6 \cD_{\alpha_0}^*)=0$, and $[X_0,V]\subseteq V$ as
$d\alpha_0\in \Lambda^2 \cD_{\alpha_0}^*$ by assumption. So Lemma \ref{le:flowofhat} gives us that
\begin{equation*}
\hat{\tau}'(t)= -\frac{X_0\hook \tau(t)}{\sqrt{\frac{\phi(\tau(t))}{2\phi(\omega_0-x(t)d\alpha_0)}}}
\end{equation*}
for all $t\in I$. But this shows that $\tau(t)$ stays even in $U:=V\cap \{\left.\nu\in \Lambda^3\mfg^*\right|\alpha_0\wedge d\nu=\alpha_0\wedge d\hat{\nu}=0\}$ as for any $\nu\in U$ we have
\begin{equation*}
\begin{split}
\alpha_0\wedge d(X_0\hook d\hat{\nu})&=-d(\alpha_0\wedge (X_0\hook d\hat{\nu}))+d\alpha_0\wedge (X_0\hook d\hat{\nu})\\
&=d\bigl(X_0\hook (\alpha_0\wedge d\hat{\nu})\bigr)-d(\alpha_0(X_0)d\hat{\nu})+X_0\hook d(\alpha_0\wedge d\hat{\nu})\\
&=0
\end{split}
\end{equation*}
and so also $\alpha_0\wedge d(X_0\hook \nu)=-\alpha_0\wedge d(X_0\hook \widehat{\widehat{\nu}})=0$. As $\tau(t)$ and $\hat{\tau}(t)$ are in $U$ for all $t\in I$,
we have $d(\alpha(t)\wedge \psi(t))=d(\alpha_0\wedge (\tau(t)+i\hat{\tau}(t)))=0$ for all $t\in I$. So $(\alpha(t),\omega(t),\psi(t))$ is hypo with
$d\alpha(t)\in \Lambda^2 \cD_{\alpha(t)}^*$, i.e. of type $2V_1\oplus V_8\oplus V_{12}$, for all $t\in I$.
Hence, $(\alpha(t),\omega(t),\psi(t))$ solves the hypo flow as
\begin{equation*}
\begin{split}
(\alpha(t)\wedge \psi(t))'&=\bigl(\alpha_0\wedge (\tau(t)+i\hat{\tau}(t))\bigr)'=-i\alpha_0\wedge X_0\hook \tfrac{d\left(\tau(t)+i\hat{\tau}(t))\right)}{x'(t)}\\
&=-i\alpha_0\wedge X_0\hook d\psi(t)=-i\alpha_0(X_0)d\psi(t)=-id\psi(t).
\end{split}
\end{equation*}
\end{proof}
Proposition \ref{pro:hypoflownotorsionV6} gives us not only the invariance of the class $2V_1\oplus V_8\oplus V_{12}$ under the hypo flow but also the invariance of certain subclasses as well as further simplifications of the hypo flow for these subclasses.
\begin{corollary}\label{co:hypoflowV1(lambda2)V12}
Let $(\alpha_0,\omega_0,\psi_0)$ be a hypo $\SU(3)$-structure on a seven-dimensional Lie algebra $\mfg$ and $(\alpha(t),\omega(t),\psi(t))_{t\in I}$ be the maximal solution of the hypo flow with initial value $(\alpha_0,\omega_0,\psi_0)$ at $t=0$. Then:
\begin{enumerate}
\item[(a)]
If $(\alpha_0,\omega_0,\psi_0)$ is of class $V_1(\lambda_2)\oplus V_{12}$, then
\begin{equation*}
(\alpha(t),\omega(t),\psi(t))=\left(\sqrt{\tfrac{\phi(\tau(t))}{2\phi(\omega_0)}}\alpha_0, \omega_0,\tfrac{\tau(t)+i\hat{\tau}(t)}{\sqrt{\frac{\phi(\tau(t))}{2\phi(\omega_0)}}}\right)
\end{equation*}
for $\tau:I\rightarrow \Lambda^3 \mfg^*$ being the maximal solution of the initial value problem
\begin{equation*}
\tau'(t)= \tfrac{X_0\hook d\hat{\tau}(t)}{\sqrt{\tfrac{\phi(\tau(t))}{2\phi(\omega_0)}}},\quad \tau(0)=\rho_0.
\end{equation*}
In particular, $(\alpha(t),\omega(t),\psi(t))$ is of class $V_1(\lambda_2)\oplus V_{12}$ for all $t\in I$.
\item[(b)]
If $(\alpha_0,\omega_0,\psi_0)$ is of class $2V_1\oplus V_8$, then
\begin{equation*}
(\alpha(t),\omega(t),\psi(t))=\left(x'(t)\alpha_0,\omega_0-x(t) d\alpha_0, \frac{y(t)}{x'(t)}\psi_0\right)
\end{equation*}
for $x,\,y:I\rightarrow \bR$ being the maximal solution of the initial value problem
\begin{equation*}
x'=\frac{y}{\sqrt{\frac{2\phi(\omega_0-xd\alpha_0)}{\phi(\psi_0)}}},\; y'=\lambda_2 \sqrt{\frac{2\phi(\omega_0-xd\alpha_0)}{\phi(\psi_0)}},\; x(0)=0,\; y(0)=1,
\end{equation*}
with $y>0$. In particular, $(\alpha(t),\omega(t),\psi(t))$ is of class $2V_1\oplus V_8$ for all $t\in I$.
\item[(c)]
If $(\alpha_0,\omega_0,\psi_0)$ has invariant intrinsic torsion with $\lambda_1\neq 0$, then
\begin{equation*}
(\alpha(t),\omega(t),\psi(t))=\left(x'(t)\alpha_0,\left(1-\lambda_1 x(t)\right)\omega_0, \frac{f(x(t))}{x'(t)}\psi_0\right),
\end{equation*}
where $x:I\rightarrow \bR$ is the maximal solution of the initial value problem
\begin{equation*}
x'=(1-\lambda_1 x)^{-\frac{3}{2}} f(x),\; x(0)=0
\end{equation*}
and $f(x):=\sqrt{-\frac{\lambda_2}{2\lambda_1}\left(1-\lambda_1 x\right)^4+\frac{\lambda_2}{2\lambda_1}+1}$ for $x\in J$, where $J$ is the maximal interval around $1$ on which the radicand is positive. In particular, $(\alpha(t),\omega(t),\psi(t))$ has invariant intrinsic torsion for all $t\in I$.
\end{enumerate}
\end{corollary}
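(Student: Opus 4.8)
The plan is to obtain all three statements as specializations of Proposition~\ref{pro:hypoflownotorsionV6}. First I would observe that each of the three classes is contained in $2V_1\oplus V_8\oplus V_{12}$: by Proposition~\ref{pro:inttorshypo}, the class $V_1(\lambda_2)\oplus V_{12}$ means $\beta=0$, $\lambda_1=0$, $\tilde{\omega}=0$ (equivalently $d\alpha=0$), the class $2V_1\oplus V_8$ means $\beta=0$, $\gamma=0$, and invariant intrinsic torsion means $\beta=0$, $\gamma=0$, $\tilde{\omega}=0$; and since $2V_1\oplus V_8\oplus V_{12}$ is precisely the condition $\beta=0$, Proposition~\ref{pro:hypoflownotorsionV6} applies in all three cases and gives the maximal solution in the form \eqref{eq:sol2V1V8V12} with $(x,\tau)$ the maximal solution of \eqref{eq:simplify2V1V8V12}. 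What remains is, in each case, to solve the reduced system explicitly and to check that the subclass is preserved.

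For (a), $d\alpha_0=0$, so $\omega(t)=\omega_0-x(t)\,d\alpha_0=\omega_0$, the factor $2\phi(\omega_0-x(t)\,d\alpha_0)$ in \eqref{eq:simplify2V1V8V12} is the constant $2\phi(\omega_0)$, and the equations for $x$ and $\tau$ are exactly those in the statement; preservation of the class is immediate from $d\alpha(t)=x'(t)\,d\alpha_0=0$. For (b) and (c) I would make the ansatz $\tau(t)=y(t)\rho_0$ with $y>0$. Since $\widehat{y\rho_0}=y\hat{\rho}_0$ for $y>0$, and since $\beta=\gamma=0$ forces $d\hat{\rho}_0=\lambda_2\,\alpha_0\wedge\rho_0$ in Proposition~\ref{pro:inttorshypo}, hence $X_0\hook d\hat{\rho}_0=\lambda_2\rho_0$ (using $\alpha_0(X_0)=1$ and $X_0\hook\rho_0=0$), the $\tau$-equation of \eqref{eq:simplify2V1V8V12} reads $y'(t)\rho_0=\lambda_2 y(t)\rho_0/x'(t)$, so the ansatz is self-consistent. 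Substituting $\phi(\tau(t))=y(t)^2\phi(\rho_0)=y(t)^2\phi(\psi_0)$ into the $x$-equation gives $x'=y/\sqrt{2\phi(\omega_0-x\,d\alpha_0)/\phi(\psi_0)}$, and combining with $y'=\lambda_2 y/x'$ yields the system of (b); uniqueness of solutions of \eqref{eq:simplify2V1V8V12} identifies this with the maximal solution, and $\psi(t)=(y(t)/x'(t))\psi_0$. That class $2V_1\oplus V_8$ is preserved follows since $\psi(t)$ is a scalar multiple of $\psi_0$ (so $d\psi(t)\propto d\psi_0=i\lambda_2\,\alpha_0\wedge\psi_0$ has no $V_6$- or $V_{12}$-part) and $d\alpha(t)=x'(t)\,d\alpha_0\in\Lambda^2\cD_{\alpha_0}^*$ is of type $(1,1)$ for the unchanged almost complex structure, hence has no $V_6$-part. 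For (c) one additionally uses $\tilde{\omega}_0=0$: then $\omega(t)=(1-\lambda_1 x(t))\omega_0$, $\phi(\omega(t))=(1-\lambda_1 x(t))^3\phi(\omega_0)$, so $2\phi(\omega(t))/\phi(\psi_0)=(1-\lambda_1 x(t))^3$ and the system of (b) becomes $x'=(1-\lambda_1 x)^{-3/2}y$, $y'=\lambda_2(1-\lambda_1 x)^{3/2}$; then $dy/dx=\lambda_2(1-\lambda_1 x)^3/y$ separates and integrates with $y(0)=1$ to $y=f(x)$ as in the statement, and substituting back gives the claimed formulas, with invariant intrinsic torsion preserved by the argument of (b) together with $\tilde{\omega}(t)=0$.

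The part I expect to need the most care is the self-consistency of the ansatz $\tau(t)=y(t)\rho_0$ in (b) and (c) — i.e. that the flow genuinely keeps $\tau(t)$ on the ray through $\rho_0$ — which I would establish via the two identities $\widehat{y\rho_0}=y\hat{\rho}_0$ and $X_0\hook d\hat{\rho}_0=\lambda_2\rho_0$ together with the uniqueness statement for \eqref{eq:simplify2V1V8V12}; and, in (c), the elementary but sign-sensitive integration of the separated equation and the identification of the maximal interval $J$ on which $f$ is real (here the constraint $\lambda_1\lambda_2\geq 0$ from Lemma~\ref{le:exludeintrinsictorsion} keeps the radicand positive near $x=0$).
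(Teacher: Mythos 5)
Your proposal is correct and follows essentially the same route as the paper: specialize Proposition \ref{pro:hypoflownotorsionV6} to each class, use the ansatz $\tau(t)=y(t)\rho_0$ (justified by $\widehat{y\rho_0}=y\hat{\rho}_0$, $X_0\hook d\hat{\rho}_0=\lambda_2\rho_0$ and uniqueness of the reduced system) for (b), and integrate the separated equation $dy/dx=\lambda_2(1-\lambda_1x)^3/y$ for (c). The verifications of the preserved subclasses are also in order.
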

\begin{proof}
In all cases, we only have to check that Equations \eqref{eq:sol2V1V8V12} and \eqref{eq:simplify2V1V8V12} are fulfilled. For (a), this follows directly from $d\alpha_0=0$.

For (b), note that for $\tau(t)+i\hat{\tau}(t):=y(t)\psi_0$, one has $\phi(\tau(t))=y^2(t)\phi(\tau_0)$. Thus, the first equation
in \eqref{eq:simplify2V1V8V12} is fulfilled. Moreover, $d\hat{\tau}(t)=y(t)d\hat{\rho}_0=\lambda_2 y(t) \alpha_0\wedge \rho_0=\lambda_2 \alpha_0\wedge \tau(t)$
and so $y'=\lambda_2 \sqrt{\tfrac{2\phi(\omega_0-xd\alpha_0)}{\phi(\psi_0)}}$ is equivalent to $\tau'= \tfrac{X_0\hook d\hat{\tau}}{\sqrt{\tfrac{\phi(\tau)}{2\phi(\omega_0-x d\alpha_0)}}}$. 

Now (c) follows from (b) as $d\alpha_0=\lambda_1 \omega_0$ and as $f:J\rightarrow \bR$ is the maximal solution of the initial value problem $\tfrac{dy}{dx}=\lambda_2 \tfrac{2\phi(\omega_0-xd\alpha_0)}{y \phi(\psi_0)}=\tfrac{\lambda_2(1-\lambda_1 x)^3}{y}$, $y(0)=1$. 
\end{proof}
\begin{remark}
If $(\alpha_0,\omega_0,\psi_0)$ has invariant intrinsic torsion with $\lambda_1=0$, then the solution of the hypo flow with this hypo $\SU(3)$-structure at $t=0$ as initial value is given by $(\alpha(t),\omega(t),\psi(t))=\left((1+\lambda_2t)\alpha_0,\omega_0,\psi_0\right)$.
\end{remark}
\subsection{Irreducible holonomy by the left-invariant hypo flow}\label{subsec:hol}

If the hypo flow on a seven-dimensional Lie algebra $\mfg$ is such that there exists a basis of $\mfg$ which stays orthogonal through the flow,
then the following lemma will turn out to be very helpful to show that the outcoming Riemannian manifold has irreducible holonomy.
\begin{lemma}\label{le:irreduciblehol}
Let $G$ be a seven-dimensional simply-connected Lie group with associated Lie algebra $\mfg$, $I$ an open interval, $(e_1,\ldots,e_7)$ a basis of $\mfg$ and $g$ a Riemannian metric on $G\times I$ of the form
\begin{equation*}
g=\sum_{i=1}^7 h_i(t) e^i\otimes e^i +dt^2
\end{equation*}
with smooth $h_i:I\rightarrow \bR^+$ for $i=1,\ldots, 7$. If $Hol(g)\subseteq \SU(4)$ and $h_i$ is not the square of a polynomial of degree at most 1 for all $i=1,\ldots 7$, then $Hol(g)\in\{\mathrm{Sp}(2),\SU(4)\}$.
\end{lemma}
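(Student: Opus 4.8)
The plan is to work with the $g$-orthonormal frame $(f_1,\dots,f_7,\partial_t)$ on $G\times I$, where $f_i:=h_i^{-1/2}e_i$, and to produce directly enough of the holonomy algebra from curvature in the $t$-direction to force irreducibility, rather than going through a de~Rham splitting (which would require classifying by hand the reducible Ricci-flat $8$-manifolds with a large symmetry group, in particular ruling out $\Sp(1)\times\Sp(1)$-holonomy products). The starting observation is that $(f_1,\dots,f_7,\partial_t)$ is parallel along the flow lines $\{p\}\times I$: since $g=\sum_i h_i(t)\,e^i\otimes e^i+dt^2$ with $g(e_i,e_j)=h_i\delta_{ij}$ and $[e_i,\partial_t]=0$, a short Koszul computation gives $\nabla_{e_i}\partial_t=\sigma_i e_i$ with $\sigma_i:=\tfrac{h_i'}{2h_i}$ and $\nabla_{\partial_t}\partial_t=0$; as $[\partial_t,f_i]=-\sigma_i f_i$, this yields $\nabla_{\partial_t}f_i=\nabla_{f_i}\partial_t+[\partial_t,f_i]=\sigma_i f_i-\sigma_i f_i=0$.

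Next I would record the single curvature identity
\[
R(f_i,\partial_t)\partial_t=\nabla_{f_i}\nabla_{\partial_t}\partial_t-\nabla_{\partial_t}\nabla_{f_i}\partial_t-\nabla_{[f_i,\partial_t]}\partial_t=-\nabla_{\partial_t}(\sigma_i f_i)-\sigma_i\nabla_{f_i}\partial_t=-(\sigma_i'+\sigma_i^2)f_i.
\]
Writing $w_i:=\sqrt{h_i}>0$, a direct computation gives $\sigma_i'+\sigma_i^2=w_i''/w_i$. Hence $R(f_i,\partial_t)\partial_t$ vanishes identically on $I$ if and only if $w_i''\equiv0$, i.e. if and only if $h_i$ is the square of a polynomial of degree at most one in $t$; by hypothesis this fails for every $i$, so for each $i$ there is $t_i\in I$ with $R_{(p,t_i)}(f_i,\partial_t)\partial_t=c_i f_i$, $c_i:=-w_i''(t_i)/w_i(t_i)\neq0$.

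Now I would deduce irreducibility. Fix $p\in G$ and identify $T_{(p,0)}(G\times I)\cong\bR^8$ via the frame $(f_1,\dots,f_7,\partial_t)$. Since this frame is parallel along $\{p\}\times I$, parallel transport along that flow line is the identity in the respective frames, so by Ambrose--Singer the holonomy algebra $\mathfrak{hol}\subseteq\mathfrak{so}(8)$ at $(p,0)$ contains, for each $i$, the skew operator $B_i$ whose matrix in $(f_1,\dots,f_7,\partial_t)$ equals that of the curvature operator $R(f_i,\partial_t)$ at $(p,t_i)$; in particular $B_i\partial_t=c_i f_i$ with $c_i\neq0$. Suppose $\mathfrak{hol}$ did not act irreducibly: then $\bR^8=U\oplus U^\perp$ with both summands nonzero and $\mathfrak{hol}$-invariant, and the orthogonal projections $P,Q$ onto them commute with each $B_i$. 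Write $\partial_t=a+b$ with $a=P\partial_t$, $b=Q\partial_t$; after possibly exchanging $U$ and $U^\perp$ we may assume $a\neq0$. Since $B_i$ is skew-symmetric and $B_i a=P(B_i\partial_t)=c_i Pf_i$, we get $0=g(B_i a,a)=c_i\,g(Pf_i,a)=c_i\,g(f_i,a)$, hence $a\perp f_i$ for all $i$; as $(f_1,\dots,f_7,\partial_t)$ is an orthonormal basis, $a$ is a nonzero multiple of $\partial_t$, so (being $P\partial_t$, with $P$ an orthogonal projection) $P\partial_t=\partial_t$, i.e. $\partial_t\in U$. But then $c_i f_i=B_i\partial_t\in U$ for every $i$, forcing $U=\bR^8$ — a contradiction. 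Thus $\mathfrak{hol}$, and therefore $Hol(g)=Hol^0(g)$ (equality because $G\times I$ is simply connected), acts irreducibly on $\bR^8$.

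Finally, an irreducible holonomy group of an eight-dimensional Riemannian manifold contained in $\SU(4)$ must equal $\Sp(2)$ or $\SU(4)$: on Berger's list of holonomies of irreducible, non-locally-symmetric Riemannian $8$-manifolds only $\Sp(2)$ and $\SU(4)$ lie inside $\SU(4)$, while the locally symmetric alternative is impossible since $Hol(g)\subseteq\SU(4)$ makes $g$ Ricci-flat and an irreducible Ricci-flat symmetric space is flat, contradicting irreducibility. Hence $Hol(g)\in\{\Sp(2),\SU(4)\}$. The step needing the most care is the curvature computation: one must verify that $(f_i,\partial_t)$ is genuinely parallel along the flow lines so that the radial curvature collapses to $-(\sigma_i'+\sigma_i^2)f_i$, and then identify $\sigma_i'+\sigma_i^2$ with $w_i''/w_i$; granting this, the irreducibility argument uses only the minimal input from Ambrose--Singer — one holonomy element sending $\partial_t$ into each $f_i$-line — and the rest is essentially forced.
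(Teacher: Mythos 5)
Your proof is correct, and its core — producing the radial curvature operators $R(\partial_t,e_i)$ (equivalently $R(f_i,\partial_t)$), checking that they vanish identically exactly when $h_i$ is the square of an affine function, and feeding them into Ambrose--Singer — is exactly the paper's computation, just carried out in the orthonormal frame $f_i=h_i^{-1/2}e_i$ with the useful extra observation that this frame is parallel along the flow lines, which makes the Ambrose--Singer step cleaner. Where you genuinely diverge is in the concluding deduction. The paper counts: the seven nonvanishing operators give $\dim Hol(g)\geq 7$, whence ``$Hol(g)=\SU(3)$ or $Hol(g)\in\{\mathrm{Sp}(2),\SU(4)\}$'', and then excludes $\SU(3)$ because $\mathfrak{hol}(g)\bigl(\mfg\oplus\mathrm{span}(\partial_t)\bigr)\supseteq\mfg$ is seven-dimensional while $\SU(3)\subseteq\SU(4)$ moves at most a six-dimensional subspace. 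You instead prove irreducibility of the holonomy representation directly, via the projection argument showing that any invariant orthogonal splitting would force $\partial_t$, and hence all $f_i=c_i^{-1}B_i\partial_t$, into one summand; you then conclude from Berger's list together with the exclusion of the irreducible Ricci-flat locally symmetric case. Your route buys a little extra rigor: the paper's dichotomy tacitly requires knowing which subgroups of $\SU(4)$ of dimension at least $7$ can occur as holonomy groups (e.g.\ $\mathrm{U}(3)\subseteq\SU(4)$ has dimension $9$ and is only ruled out by the same image argument the paper applies explicitly just to $\SU(3)$), whereas your irreducibility argument sidesteps that enumeration entirely and reduces the case analysis to the clean statement that the only irreducible holonomies inside $\SU(4)$ in dimension eight are $\mathrm{Sp}(2)$ and $\SU(4)$. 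The paper's version, in turn, avoids invoking Berger's classification. Both are valid.
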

\begin{proof}
We compute $R(\partial_t,e_i)(\partial_t)$ for all $i=1,\ldots,7$. As the Koszul formula gives us $\nabla_{\partial_t}\partial_t=0$, we have $R(\partial_t,e_i)(\partial_t)=\nabla_{\partial_t} \nabla_{e_i}\partial_t=\nabla_{\partial_t} \nabla_{\partial_t} e_i$. Now the Koszul formula gives us also $\nabla_{\partial_t} e_i=\frac{h_i'(t)}{2h_i(t)}e_i$ for all $i\in \{1,\ldots,n\}$ as $g(e_i,e_j)=g(e_i,\partial_t)=0$ for $j\neq i$. Hence,
\begin{equation*}
\begin{split}
R(\partial_t,e_i)(\partial_t)=& \nabla_{\partial_t} \nabla_{\partial_t} e_i=\nabla_{\partial_t} \left(\frac{h_i'(t)}{2 h_i(t)}e_i\right)=\left(\frac{h_i'(t)}{2h_i(t)}\right)' e_i+\left(\frac{h_i'(t)}{2h_i(t)}\right)^2 e_i\\
=&\frac{2 h_i(t) h_i''(t)- 2 (h_i'(t))^2+(h_i'(t))^2}{4 (h_i(t))^2} e_i=\frac{2 h_i(t) h_i''(t)-(h_i'(t))^2}{4 (h_i(t))^2} e_i.
\end{split}
\end{equation*}
for $i=1,\ldots, 7$. By the Theorem of Ambrose-Singer, $R(\partial_t,e_i)$ is contained in the holonomy algebra $\mathfrak{hol}(g)$ for all $i=1,\ldots,7$ and so
\begin{equation*}
\begin{split}
\dim(Hol(g))&\geq \left|\left\{i\in \{1,\ldots,7\}\left|\frac{2 h_i(t) h_i''(t)-(h_i'(t))^2}{4 (h_i(t))^2}  \neq 0 \textrm{ for some $t\in I$}\right.\right\}\right|\\
&=\left|\left\{i\in \{1,\ldots,7\}\left| h_i''(t)\neq \frac{(h_i'(t))^2}{2 h_i(t)}\textrm{ for some $t\in I$}\right.\right\}\right|\\
&\geq\left|\left\{i\in \{1,\ldots,7\}\left|h_i \neq p^2 \textrm{ for all polynomials $p$ of degree $0$ or $1$} \right.\right\}\right|\\
&=7
\end{split}
\end{equation*}
as the solutions of the ordinary differential equation $x''(t)= \tfrac{(x'(t))^2}{2\, x(t)}$ with positive $x(t)$ are of the form $x(t)=(at+b)^2$ for
appropriate $a,b\in \bR$. As we also have $Hol(g)\subseteq \SU(4)$, we must have $Hol(g)=\SU(3)$ or $Hol(g)\in \{\mathrm{Sp}(2),\SU(4)\}$.
In the first case, $\mathfrak{hol}(g)\left(\mfg\oplus \spa{\partial_t}\right)$ has to be six-dimensional but 
\begin{equation*}
\mathfrak{hol}(g)\left(\mfg\oplus \spa{\partial_t}\right)\supseteq \left\{\left.R(\partial_t,e_i)(\partial_t)\right|i=1,\ldots,7 \right\}=\mfg,
\end{equation*}
a contradiction. Thus, $Hol(g)\in \{\mathrm{Sp}(2),\SU(4)\}$. 
\end{proof}
This allows us now to prove that under some mild extra assumptions the hypo flow on Lie algebras with initial values in class $2V_1\oplus V_8$ yield Riemannian manifolds with irreducible holonomy. 
\begin{thm}\label{th:irreduciblehol}
Let $(\alpha_0,\omega_0,\psi_0)$ be a hypo $\SU(3)$-structure of class $2V_1\oplus V_8$ on a seven-dimensional Lie algebra $\mfg$ with $(d\alpha_0)^3\neq 0$. Then the hypo flow on $\mfg$ with initial value $(\alpha_0,\omega_0,\psi_0)$ yields a Riemannian manifold $(G\times I,g)$ with $Hol(g)\in \{\mathrm{Sp}(2),\SU(4)\}$, where $G$ denotes the associated simply-connected Lie group.
\end{thm}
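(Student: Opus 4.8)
\emph{Proof plan.} The plan is to exhibit a basis of $\mfg$ which stays orthogonal along the hypo flow and then to invoke Lemma~\ref{le:irreduciblehol}. Since $(\alpha_0,\omega_0,\psi_0)$ is of class $2V_1\oplus V_8$, the two-form $d\alpha_0=\lambda_1\omega_0+\tilde{\omega}$ lies in $\Lambda^2\cD_{\alpha_0}^*$ and is of type $(1,1)$ for $J_{(\alpha_0,\psi_0)}$; as a real $(1,1)$-form it can be diagonalised simultaneously with $\omega_0$ by the structure group, so there is a $g_{(\alpha_0,\omega_0,\psi_0)}$-orthonormal adapted basis $(e_1,\dots,e_7)$ of $\mfg$ (thus $\alpha_0=e^7$, $\omega_0=e^{12}+e^{34}+e^{56}$, $\psi_0=e^1_{\bC}\wedge e^2_{\bC}\wedge e^3_{\bC}$, $e_7$ the Reeb field) with
\[
d\alpha_0=a_1e^{12}+a_2e^{34}+a_3e^{56},
\]
where $a_1a_2a_3\neq0$ because $(d\alpha_0)^3\neq0$. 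By Corollary~\ref{co:hypoflowV1(lambda2)V12}(b) the flow is $(\alpha(t),\omega(t),\psi(t))=\bigl(x'(t)\alpha_0,\,\omega_0-x(t)\,d\alpha_0,\,\tfrac{y(t)}{x'(t)}\psi_0\bigr)$, so $\cD_{\alpha(t)}=\cD_{\alpha_0}$ and $J_{(\alpha(t),\psi(t))}=J_{(\alpha_0,\psi_0)}$ for all $t$, and evaluating $g_{(\alpha(t),\omega(t),\psi(t))}$ on adapted frames gives the metric on $G\times I$ as
\[
g=\sum_{i=1}^{3}\bigl(1-a_ix(t)\bigr)\bigl(e^{2i-1}\otimes e^{2i-1}+e^{2i}\otimes e^{2i}\bigr)+\bigl(x'(t)\bigr)^2 e^7\otimes e^7+dt^2 .
\]
So $(e_1,\dots,e_7)$ remains orthogonal, with $h_{2i-1}=h_{2i}=1-a_ix$ and $h_7=(x')^2$, all positive on $I$. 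Since the hypo flow always yields $Hol(g)\subseteq\SU(4)$ (Proposition~\ref{pro:hypoflow}), Lemma~\ref{le:irreduciblehol} gives $Hol(g)\in\{\mathrm{Sp}(2),\SU(4)\}$ as soon as no $h_i$ is the square of a polynomial of degree $\le1$. But if $h_7$ were such, then $x'$ (which is $>0$) would agree on $I$ with a polynomial of degree $\le 1$, hence $x$ with one of degree $\le 2$; and if $h_{2i-1}$ were, then $x=(1-h_{2i-1})/a_i$ would be a polynomial of degree $\le2$. So it suffices to prove that $x$ is \emph{not} a polynomial of degree $\le2$.

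To this end I would reduce the flow equations to one algebraic identity. Writing $P(x):=\prod_{i=1}^3(1-a_ix)$ and using $\phi(\psi_0)=2\phi(\omega_0)$, $\phi(\omega_0)=e^{123456}$, one finds $\tfrac{2\phi(\omega_0-x\,d\alpha_0)}{\phi(\psi_0)}=P(x)$, so the problem in Corollary~\ref{co:hypoflowV1(lambda2)V12}(b) becomes $x'=y/\sqrt{P(x)}$, $y'=\lambda_2\sqrt{P(x)}$, $x(0)=0$, $y(0)=1$, $y>0$. Comparing the $t$-derivatives of $y^2$ and of $1+2\lambda_2\int_0^x P$ gives $y^2=Q(x)$ with $Q(x):=1+2\lambda_2\int_0^xP(s)\,ds$, hence
\[
\bigl(x'(t)\bigr)^2 P\bigl(x(t)\bigr)=Q\bigl(x(t)\bigr)\qquad\text{on }I .
\]
Here $\deg_xP=3$ (as $a_1a_2a_3\neq0$), while $\deg_xQ=4$ if $\lambda_2\neq0$ and $Q\equiv1$ if $\lambda_2=0$, and $x$ is strictly increasing, hence non-constant.

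Suppose now, for a contradiction, that $x$ is a polynomial of degree $d\in\{1,2\}$ in $t$. Comparing $t$-degrees in the displayed identity gives $5d-2=(\deg_xQ)\,d$, which is impossible if $\lambda_2=0$ and forces $d=2$ if $\lambda_2\neq0$. So $\lambda_2\neq0$ and $x=At^2+Bt$ with $A\neq0$; differentiating the identity, dividing out $x'$, and using $Q'=2\lambda_2P$, $x''=2A$, and $(x')^2P(x)=Q(x)$ yields the polynomial identity $Q(x)P'(x)=(2\lambda_2-4A)P(x)^2$, and matching leading coefficients forces $A=\lambda_2/8$, i.e.\ $QP'=\tfrac{3\lambda_2}{2}P^2$. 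At a root $r$ of $P$ of multiplicity $m$ one has $\operatorname{ord}_r(P')=m-1$ and $\operatorname{ord}_r(P^2)=2m$, so $\operatorname{ord}_r(Q)=m+1$; summing over the distinct roots of $P$ gives $3+r\le\deg Q=4$, where $r\ge 1$ is the number of distinct roots of $P$. Hence $r=1$, $P$ has a single triple root, i.e.\ $a_1=a_2=a_3=:a$, and $Q$ has a zero of order $4$ at $1/a$, so $Q(x)=(1-ax)^4$; comparing with $Q(x)=1+2\lambda_2\int_0^x(1-as)^3\,ds=1+\tfrac{\lambda_2}{2a}\bigl(1-(1-ax)^4\bigr)$ forces $\lambda_2=-2a$. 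But $a_1=a_2=a_3$ means $\tilde{\omega}=0$, so $(\alpha_0,\omega_0,\psi_0)$ has invariant intrinsic torsion $(\lambda_1,\lambda_2)=(a,-2a)$ with $\lambda_1\lambda_2=-2a^2<0$, which cannot occur on a seven-dimensional Lie algebra by Lemma~\ref{le:exludeintrinsictorsion}. This contradiction shows that $x$ is not a polynomial of degree $\le2$, and the theorem follows.

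I expect the last paragraph to be the main obstacle: the degree and divisor comparison for the ODE does not on its own exclude a quadratic $x$ — the coincident-eigenvalue case $a_1=a_2=a_3$ passes it, producing precisely the Sasaki--Einstein-type relation $\lambda_2=-2\lambda_1$ — and closing that gap needs the non-existence result Lemma~\ref{le:exludeintrinsictorsion}. A smaller technical point is the careful identification of the evolving metric, i.e.\ the simultaneous diagonalisation of $\omega_0$ and $d\alpha_0$ in an adapted orthonormal frame, which is what makes Lemma~\ref{le:irreduciblehol} directly applicable.
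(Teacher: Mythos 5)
Your proposal is correct and follows essentially the same route as the paper: reduce via Corollary \ref{co:hypoflowV1(lambda2)V12}(b) to a metric that is diagonal in a fixed basis, apply Lemma \ref{le:irreduciblehol} to reduce everything to showing that $x(t)$ is not a polynomial of degree at most two, and derive from the contrary assumption the relation $\lambda_2=-2\lambda_1$ for an invariant-intrinsic-torsion structure, which Lemma \ref{le:exludeintrinsictorsion} excludes. The only differences are organisational: you diagonalise $d\alpha_0$ in a unitary adapted frame at the outset and run a single root-multiplicity count on the identity $QP'=\tfrac{3\lambda_2}{2}P^2$, whereas the paper keeps $d\alpha_0=\lambda_1\omega_0+\tilde\omega$ abstract and performs two successive ``the derivative divides the polynomial'' arguments (first forcing $\tilde\omega=0$, then, via Corollary \ref{co:hypoflowV1(lambda2)V12}(c), forcing $\tfrac{\lambda_2}{2\lambda_1}=-1$).
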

\begin{proof}
By Corollary \ref{co:hypoflowV1(lambda2)V12} (b), the maximal solution $(\alpha(t),\omega(t),\psi(t))_{t\in I}$ of the hypo flow with initial value $(\alpha_0,\omega_0,\psi_0)$ at $t=0$ is given by
\begin{equation*}
(\alpha(t),\omega(t),\psi(t))=\left(x'(t)\alpha_0,\omega_0-x(t) d\alpha_0, \frac{y(t)}{x'(t)}\psi_0\right),
\end{equation*}
where $x,\,y:I\rightarrow \bR$ is the maximal solution of the initial value problem
\begin{equation*}
x'=\frac{y}{\sqrt{\frac{2\phi(\omega_0-xd\alpha_0)}{\phi(\psi_0)}}},\; y'=\lambda_2 \sqrt{\frac{2\phi(\omega_0-xd\alpha_0)}{\phi(\psi_0)}},\; x(0)=0,\; y(0)=1
\end{equation*}
In particular, $J_{\psi(t)}=J_{\psi_0}$ for all $t\in I$ and so
\begin{equation*}
\begin{split}
g=&\, g_{(\omega(t),\psi(t))}+\alpha(t)\otimes \alpha(t)+dt^2=\omega(t)(J_{\psi_0}\cdot,\cdot)+(x'(t))^2 \alpha_0\otimes \alpha_0+dt^2\\
=&\,g_0-x(t) g_1+(x'(t))^2 \alpha_0\otimes \alpha_0+dt^2
\end{split}
\end{equation*}
for $g_0:=\omega_0(J_{\psi_0}\cdot,\cdot)$ and $g_1:=d\alpha_0(J_{\psi_0}\cdot,\cdot)$. Note that $g_1$ is symmetric as $d\alpha_0$ is a $(1,1)$-form and that $g_1$ is non-degenerate as $(d\alpha_0)^3\neq 0$. We choose a basis $(e_1,\ldots,e_6)$ of $D_{\alpha_0}$ such that $g_0=\sum_{i=1}^6 e^i\otimes e^i$. As $g_1$ is symmetric, we may choose this basis such that $g_1=\sum_{i=1}^6 a_i e^i\otimes e^i$ for certain $a_1,\ldots,a_6\in \bR$. The non-degeneracy of $g_1$ is then encoded in $a_i\neq 0$ for all $i=1,\ldots,6$. Moreover,
\begin{equation*}
g=\sum_{i=1}^6 (1-a_i x(t)) e^i\otimes e^i+(x'(t))^2\alpha_0\otimes \alpha_0+dt^2.
\end{equation*}
By Lemma \ref{le:irreduciblehol}, it suffices to show that $x(t)$ is not a polynomial of degree at most $2$ in $t$ to conclude the result.
So assume the contrary. Note that we may assume that $x(t)$ has degree $2$ as otherwise $1-a_i x(t)$ can only be the square of a polynomial of degree at most $1$ if $x$ is constant, which is not possible as Corollary \ref{co:hypoflowV1(lambda2)V12} gives us $x'(0)=1$. Now $p(x):=\frac{2\phi(\omega_0-xd\alpha_0)}{\phi(\psi_0)}$ is a polynomial of degree $3$ in $x$ as $(d\alpha_0)^3\neq 0$ and so $q(t):=p(x(t))$ has degree $6$. As $y(t)=x'(t)\sqrt{p(x(t))}=x'(t) \sqrt{q(t)}$, we obtain the equation
\begin{equation*}
\lambda_2 \sqrt{q(t)}=y'(t)=x''(t) \sqrt{q(t)}+\tfrac{x'(t) q'(t)}{2\sqrt{q(t))}},
\end{equation*}
i.e. $2(\lambda_2-x''(t)) q(t)=x'(t) q'(t)$. Now $x''(t)$ is constant and so $q'$ divides $q$. Thus, $q$ has only one complex root $t_0$ of degree $6$, which has to be real as $q$ is a real polynomial. As a complex polynomial maps $\bC$ surjectively onto $\bC$ and $q(t)=p(x(t))$, the real number $x_0:=x(t_0)$ is the only complex root of $p(x)$ and we have $p(x)=b(x-x_0)^3$ for some $b\in \bR$. Moreover, $d\alpha_0=\lambda_1 \omega_0+\tilde{\omega}$ for some $\lambda_1\in \bR$ and some $\tilde{\omega}\in [\Lambda^{1,1}_0]$ by Proposition \ref{pro:inttorshypo} and so
\begin{equation*}
\begin{split}
6\phi(\omega_0-xd\alpha_0)=&6\phi\left((1-\lambda_1 x)\omega_0+x\tilde{\omega}\right)\\
=&(1-\lambda_1 x)^3 \omega_0^3+3x^2(1-\lambda_1 x) \omega_0\wedge \tilde{\omega}^2+x^3 \tilde{\omega}^3\\
=&\left((1-\lambda_1 x)^3+3 C_1 x^2(1-\lambda_1 x)+C_2 x^3\right) \omega_0^3,
\end{split}
\end{equation*}
where $C_1,\,C_2\in \bR$ are defined by $\omega_0\wedge \tilde{\omega}^2=C_1 \omega_0^3$ and $\tilde{\omega}^3=C_2 \omega_0^3$. Hence,
\begin{equation*}
\begin{split}
p(x)=& (1-\lambda_1 x)^3+3 C_1 x^2(1-\lambda_1 x)+C_2 x^3\\
=&(-\lambda_1^3-3C_1 \lambda_1+C_2)x^3+3(\lambda_1^2+C_1)x^2-3\lambda_1 x+1.
\end{split}
\end{equation*}
Now the equality $p(x)=b(x-x_0)^3$ is equivalent to $-bx_0^3=1$, $bx_0^2=-\lambda_1$, $-bx_0=\lambda_1^2+C_1$ and $b=-\lambda_1^3-3C_1 \lambda_1+C_2$. From the first two equations, we get $\lambda_1\neq 0$ and so $b=-\lambda_1^3,\, x_0=\tfrac{1}{\lambda_1}$. Then the third and the fourth give us $C_1=C_2=0$. So $\omega_0\wedge \tilde{\omega}^2=0$. Denote by $\star$ the Hodge star operator on $\cD_{\alpha_0}$ induced by $g_0$ and $\phi(\omega_0)$. Computing with one explicit element and using Schur's Lemma, one obtains $\star \nu=-\nu\wedge \omega_0$ for any $\nu\in [\Lambda^{1,1}_0]$. So $0=\omega_0\wedge \tilde{\omega}^2=-\tilde{\omega}\wedge \star\tilde{\omega}=-g(\tilde{\omega},\tilde{\omega})\phi(\omega_0)$, which implies $\tilde{\omega}=0$. Then $(d\alpha_0)^3\neq 0$ gives us $\lambda_1\neq 0$. Setting $Q(t):=P(x(t))$ with $P(x):=-\frac{\lambda_2}{2\lambda_1}\left(1-\lambda_1 x\right)^4+\frac{\lambda_2}{2\lambda_1}+1$ and noting that $Q'(t)=2\lambda_2(1-\lambda_1 x(t))^3 x'(t)$, Corollary \ref{co:hypoflowV1(lambda2)V12} (c) yields
\begin{equation*}
x'(t) Q'(t)=2\lambda_2(x'(t))^2 (1-\lambda_1 x(t))^3=2\lambda_2 Q(t).
\end{equation*}
We must have $\lambda_2\neq 0$ as otherwise $Q'(t)=0$ and so $Q$ would be constant, which is not possible as $P(x)$ is of degree $4$ in $x$ and so $Q(t)$ has degree $8$ in $t$. So $Q'$ divides $Q$ and analogous as above we get $P(x)=c(x-x_1)^4$ for certain $c\in \bR\backslash \{0\}$ and $x_1\in \bR$. From the explicit formula for $P$ we get that this implies $\frac{\lambda_2}{2\lambda_1}=-1$. But there is no hypo $\SU(3)$-structure on a Lie algebra with this property by Lemma \ref{le:exludeintrinsictorsion}. This finishes the proof.
\end{proof}
\subsection{Obstructions for holonomy $\mathrm{Sp}(2)$}\label{subsec:hyperKahler}
In this subsection, we develop obstructions for the holonomy of the Riemannian manifold obtained by the left-invariant hypo flow on a Lie group $G$ being \emph{equal} to $\mathrm{Sp}(2)$. More exactly, we show that if the holonomy is equal to $\mathrm{Sp}(2)$, then the Riemannian manifold can also be obtained by a flow of so-called hypo $\mathrm{Sp}(1)$-structures on $G$ which are \emph{left-invariant} for all times $t$ and induce the corresponding hypo $\SU(3)$-structure at the time $t$. In particular, we obtain that the Lie algebra $\mfg$ has to admit a hypo $\mathrm{Sp}(1)$-structure of a certain kind, which already gives severe restrictions on $\mfg$ in some cases.

The main ingredient in the proof of the mentioned obstruction is the well-known next lemma, which tells us that if the holonomy equals $\mathrm{Sp}(2)$,
then the only parallel forms are linear combinations of wedge products of the three K\"ahler forms.

Before stating this lemma, recall that an \emph{$\mathrm{Sp}(2)$-structure} (
in the literature often called \emph{almost hyper-Hermitian structure}) on an 
eight-dimensional manifold is a quadruple $(g,J_1,J_2,J_3)$ consisting of a 
Riemannian metric $g$ and three anti-commuting orthogonal almost complex 
structures $J_1,J_2,J_3$ fulfilling the quaternionic relation $J_1 J_2=J_3$. 
Then we have three associated fundamental forms $(\omega_1,\omega_2,\omega_3)$
, $\omega_i=g(\cdot, J_i \cdot)$. The pointwise stabilizer of $(g,J_1,J_2,J_3)
$ equals $\mathrm{Sp}(2)$, which is also the pointwise stabilizer of $(\omega_
1,\omega_2,\omega_3)$. Hence, one can reconstruct $(g,J_1,J_2,J_3)$ from $(
\omega_1,\omega_2,\omega_3)$ and we also call the triple $(\omega_1,\omega_2,
\omega_3)$ an \emph{$\mathrm{Sp}(2)$-structure}. In fact, considering $\omega_
i$ as a vector bundle homomorphism $TM\rightarrow T^*M$, one has $J_i=-\omega_
{i+1}^{-1}\circ \omega_{i+2}$ for all $i=1,2,3$ and then obtains $g$ by $g=
\omega_i(J_i\cdot,\cdot)$ for any $i\in \{1,2,3\}$. Note that an $\mathrm{Sp}(
2)$-structure $(\omega_1,\omega_2,\omega_3)$ has model tensors
\begin{equation*}
\left(e^{12}+e^{34}+e^{56}+e^{78},e^{13}-e^{24}+e^{57}-e^{68},-e^{14}-e^{23}-e
^{58}-e^{67}\right)\in \Bigl(\Lambda^2 (\bR^8)^*\Bigr)^3
\end{equation*}
and one can then check that $\omega_1^4=\omega_2^4=\omega_3^4=3\omega_i^2
\wedge \omega_j^2$ for all $(i,j)\in \{1,2,3\}$, $i\neq j$.
Another well-known important feature of $\mathrm{Sp}(2)$-structures $(\omega_1
,\omega_2,\omega_3)$ is that they are parallel with respect
to the Levi-Civita connection induced by $g$ if and only if $d\omega_1=d\omega
_2=d\omega_3=0$.
Moreover, on an eight-dimensional Riemannian manifold $(M,g)$ with holonomy 
contained in $\mathrm{Sp}(2)$ there exists a parallel $\mathrm{Sp}(2)$-
structure
(also called \emph{hyperK\"ahler structure}) $(\omega_1,\omega_2,\omega_3)$
inducing the Riemannian metric $g$. With this background, we may now state 
the above mentioned lemma.
\begin{lemma}\label{le:parallelforms}
Let $(M,g)$ be an eight-dimensional Riemannian manifold with holonomy equal 
to $\mathrm{Sp}(2)$.
Then there exists a parallel $\mathrm{Sp}(2)$-structure $(\Omega_1,\Omega_2,
\Omega_3)$ and the space of parallel two-forms equals
$\spa{\Omega_1,\Omega_2,\Omega_3}$ and the space of parallel four-forms equals
$\spa{\Omega_1^2,\Omega_2^2,\Omega_3^2,\Omega_1\wedge \Omega_2,\Omega_1\wedge 
\Omega_3,\Omega_2\wedge \Omega_3}$
\end{lemma}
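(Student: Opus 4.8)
The plan is to translate the statement, via the holonomy principle, into a purely representation–theoretic count of invariants. First I would recall that on a Riemannian manifold $(M,g)$ with holonomy group $H=Hol(g)$ at a base point $p$, evaluation at $p$ gives a linear isomorphism between parallel $k$-forms on $M$ and $H$-invariant elements of $\Lambda^k T_p^*M$; fixing a linear identification $T_pM\cong\bR^8$ under which $H$ becomes the standard $\mathrm{Sp}(2)\subseteq\SO(8)$ (legitimate precisely because $Hol(g)=\mathrm{Sp}(2)$ exactly, not a proper subgroup), the space of parallel $k$-forms is isomorphic to $(\Lambda^k(\bR^8)^*)^{\mathrm{Sp}(2)}$. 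The existence of a parallel $\mathrm{Sp}(2)$-structure $(\Omega_1,\Omega_2,\Omega_3)$ inducing $g$ is the recalled well-known feature of holonomy contained in $\mathrm{Sp}(2)$, and since each $\Omega_i$ is parallel, so are the six four-forms $\Omega_i\wedge\Omega_j$ with $1\le i\le j\le 3$. A short computation with the model tensors displayed just before the lemma shows that $\Omega_1,\Omega_2,\Omega_3$ are everywhere linearly independent, and likewise that the six products $\Omega_i\wedge\Omega_j$ are everywhere linearly independent. Hence it remains only to establish $\dim(\Lambda^2(\bR^8)^*)^{\mathrm{Sp}(2)}=3$ and $\dim(\Lambda^4(\bR^8)^*)^{\mathrm{Sp}(2)}=6$, which together with the inclusions just produced forces the two spaces to be spanned as claimed.

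For the dimension counts I would complexify, using $(\Lambda^k(\bR^8)^*)^{\mathrm{Sp}(2)}\otimes\bC\cong(\Lambda^k(\bC^8)^*)^{\mathrm{Sp}(2)}$, and write $\bC^8\cong E\otimes H$ with $E=\bC^4$ the standard (symplectic) representation of $\mathrm{Sp}(2)$ and $H=\bC^2$ carrying the trivial $\mathrm{Sp}(2)$-action (as an $\mathrm{Sp}(1)$-module $H$ is the standard one, but only its dimension enters here). The Cauchy-type plethysm $\Lambda^k(E\otimes H)\cong\bigoplus_{\lambda\vdash k}\mathbb{S}_\lambda E\otimes\mathbb{S}_{\lambda'}H$ reduces the count to two ingredients: that $\mathbb{S}_\mu\bC^2$ vanishes as soon as $\mu$ has more than two rows, and the multiplicity of the trivial $\mathrm{Sp}(2)$-representation inside $\mathbb{S}_\lambda E$, which by the classical branching rule from $\GL$ to $\mathrm{Sp}$ is $1$ when every column of $\lambda$ has even length (and $\ell(\lambda)\le 4$) and $0$ otherwise. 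For $k=2$ only $\lambda=(1,1)$ survives, contributing $\dim S^2H=3$; for $k=4$ the surviving partitions are $(2,2)$ and $(1,1,1,1)$, contributing $\dim\mathbb{S}_{(2,2)}\bC^2=1$ and $\dim S^4\bC^2=5$ respectively, while $(2,1,1)$ is killed by the even-column condition and $(4),(3,1)$ are killed because their conjugates have three or four rows. This yields $3$ and $1+5=6$, as needed.

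The argument is mostly bookkeeping and I do not foresee a genuine obstacle; the single point that has to be handled carefully is to take invariants under $\mathrm{Sp}(2)$ alone rather than under the larger group $\mathrm{Sp}(2)\mathrm{Sp}(1)$ — it is exactly because $H=\bC^2$ is treated as a trivial $\mathrm{Sp}(2)$-module that all five independent components of $S^4H$ contribute and the answer for four-forms is $6$, not the single parallel form $\Omega_1^2+\Omega_2^2+\Omega_3^2$ one would obtain under $\mathrm{Sp}(2)\mathrm{Sp}(1)$. An equivalent route, if one prefers to avoid Schur functors, is to quote the explicit $\mathrm{Sp}(2)$-decomposition $\Lambda^2\bR^8=\mathfrak{sp}(2)\oplus\mathfrak{sp}(1)\oplus W$ with $W$ a $15$-dimensional summand on which $\mathrm{Sp}(2)$ has no invariants, so that the trivial part is $\mathfrak{sp}(1)=\spa{\Omega_1,\Omega_2,\Omega_3}$, together with the analogous (if longer) decomposition of $\Lambda^4\bR^8$, reading off the trivial summands directly.
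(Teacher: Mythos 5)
Your argument is correct and its skeleton coincides with the paper's: both reduce, via the holonomy principle and the hypothesis that the holonomy is \emph{equal} to $\mathrm{Sp}(2)$ (so that pointwise invariance is under exactly $\mathrm{Sp}(2)$ and nothing larger), to identifying the $\mathrm{Sp}(2)$-invariant two- and four-forms on $\bR^8$. The difference lies entirely in how that invariant-theoretic input is obtained: the paper simply cites Fukami's classification of invariant tensors of the real symplectic representation, whereas you prove it, complexifying to $\bC^8\cong E\otimes H$ and combining the Cauchy decomposition of $\Lambda^k(E\otimes H)$ with the fact that $(\mathbb{S}_\lambda\bC^4)^{\mathrm{Sp}(4,\bC)}$ is one-dimensional precisely when all columns of $\lambda$ have even length. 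Your partition bookkeeping checks out (for $k=2$ only $(1,1)$ contributes, giving $1\cdot\dim S^2H=3$; for $k=4$ only $(2,2)$ and $(1^4)$ contribute, giving $1+5=6$), and your caveat about taking invariants under $\mathrm{Sp}(2)$ rather than $\mathrm{Sp}(2)\mathrm{Sp}(1)$ is exactly the point where a careless count would go wrong. What your route buys is a self-contained proof in place of a citation; what it costs is that you still owe two routine verifications that the paper's citation also implicitly covers, namely the pointwise linear independence of $\Omega_1,\Omega_2,\Omega_3$ and of the six products $\Omega_i\wedge\Omega_j$ ($i\le j$), which one checks directly on the model tensors (each product contains a basis four-form, e.g.\ $e^{1256}$ in $\Omega_1^2$ or $e^{1257}$ in $\Omega_1\wedge\Omega_2$, absent from the other five). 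Neither of these is an obstacle, so the proposal stands as a valid alternative proof.
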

\begin{proof}
By the holonomy principle, inserting a point $p\in M$ provides an isomorphism 
between the space of parallel two- or four-forms and the space
 of $\mathrm{Sp}(2)$-invariant two- or four-forms on $\Lambda^2 T_p^* M\cong 
\Lambda^2 \left(\bR^8\right)^*$, $p\in M$, respectively. By \cite{Fu},
the latter spaces are spanned by the three K\"ahler forms or the wedge 
products of the three K\"ahler forms on $\bR^8$, respectively.
\end{proof}
Applying Lemma \ref{le:parallelforms} to the situation we are interested in, 
namely to an eight-dimensional manifold $M$ endowed with the action of a Lie 
group
$\G$ and with a $\G$-invariant parallel $\SU(4)$-structure $(\Omega,\Psi)$ 
for which the induced Riemannian metric $g$ has holonomy contained in $\mathrm
{Sp}(2)$, leaves us with
two problems. Firstly, how the induced parallel $\mathrm{Sp}(2)$-structure
$(\Omega_1,\Omega_2,\Omega_3)$ is related to $(\Omega,\Psi)$ and secondly, if $(\Omega_1
,\Omega_2,\Omega_3)$ is again $\G$-invariant. In general, the latter is not 
true. E.g., take a Lie algebra of the form $\mfg=\bR^6\rtimes_{\varphi} \bR^2$
 with
$\varphi:\bR^2\rightarrow \mathfrak{su}(3)\subseteq \mathfrak{gl}(6,\bR)$ 
being a Lie algebra homomorphism such that for some $v\in \bR^2$,
the endomorphism $\varphi(v)$ has rank $6$. Then $\mfg$ admits a flat Calabi-
Yau structure but no hyperK\"ahler structure by \cite[Proposition 3.2]{BDF}.
However, the next lemma shows that the situation is different when the 
holonomy is \emph{equal} to $\mathrm{Sp}(2)$.
\begin{lemma}\label{le:SU4Sp2}
Let $(M,g)$ be a Riemannian manifold endowed with an action of a connected 
Lie group $G$ and a parallel $G$-invariant $\mathrm{SU}(4)$-structure $(\Omega
,\Psi)$ inducing the Riemannian metric $g$. If the holonomy of $(M,g)$ equals 
$\mathrm{Sp}(2)$, then there exists a $G$-invariant parallel $\mathrm{Sp}(2)$-
structure $(\Omega_1,\Omega_2,\Omega_3)\in\left(\Omega^2 M\right)^3$ inducing 
the metric $g$ such that $\Omega=\Omega_1$ and $\Psi=\frac{1}{2}\left(\Omega_2
+i\Omega_3\right)^2$.
\end{lemma}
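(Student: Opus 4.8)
The plan is to obtain the hyper-Kähler triple by suitably rotating the one produced by Lemma~\ref{le:parallelforms} and then to pin down the last two Kähler forms from the type of $\Psi$. Throughout write $g:=g_{(\Omega,\Psi)}$ and $J:=J_{\Psi}$; both are $G$-invariant, since they are canonically determined by the $G$-invariant pair $(\Omega,\Psi)$. Because $Hol(g)=\Sp(2)$, Lemma~\ref{le:parallelforms} supplies a parallel $\Sp(2)$-structure $(\widetilde\Omega_1,\widetilde\Omega_2,\widetilde\Omega_3)$ inducing $g$, with associated almost complex structures $\widetilde J_1,\widetilde J_2,\widetilde J_3$, for which the space of parallel two-forms is $\spa{\widetilde\Omega_1,\widetilde\Omega_2,\widetilde\Omega_3}$ and the space of parallel four-forms is spanned by the products $\widetilde\Omega_i\wedge\widetilde\Omega_j$.

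First I would place $\Omega$ inside this picture. Since $\Omega$ is parallel we have $\Omega=\sum_i a_i\widetilde\Omega_i$ with $a_i\in\bR$, and as $\Omega=g(\cdot,J\cdot)$ with $J^2=-\id$ the endomorphism $\sum_i a_i\widetilde J_i$ squares to $-\id$; using that the $\widetilde J_i$ anticommute, this forces $\sum_i a_i^2=1$, so $J=\sum_i a_i\widetilde J_i$ lies on the twistor two-sphere. Acting on the triple $(\widetilde\Omega_1,\widetilde\Omega_2,\widetilde\Omega_3)$ by a suitable element of $\SO(3)$ — which again yields a parallel $\Sp(2)$-structure inducing the same $g$ — I may henceforth assume $\Omega=\widetilde\Omega_1$ and $J=\widetilde J_1$. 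Put $\sigma:=\widetilde\Omega_2+i\widetilde\Omega_3$. A short computation (this is the usual holomorphic symplectic form) shows $\sigma$ is of type $(2,0)$ for $J$; since in addition $\widetilde\Omega_1$ is of type $(1,1)$ and $\widetilde\Omega_2,\widetilde\Omega_3$ are $J$-anti-invariant, the space of parallel $(2,0)$-forms is exactly the complex line $\bC\,\sigma$.

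Next I would bring in $\Psi$, which is parallel and, by definition of an $\SU(4)$-structure, of type $(4,0)$ for $J$. Decomposing the $\widetilde\Omega_i\wedge\widetilde\Omega_j$ into $J$-types one checks that the parallel $(4,0)$-forms form the line $\bC\,\sigma^2$, so $\Psi=c\,\sigma^2$ for some $c\in\bC^{*}$. For $G$-invariance, $G$ preserves $g$, $J$ and all parallel forms, hence preserves the line $\bC\,\sigma$ and acts on it through a continuous character $\lambda\colon G\to\bC^{*}$; then $G$-invariance of $\Psi=c\,\sigma^2$ gives $\lambda(g)^2=1$ for all $g\in G$, and connectedness of $G$ forces $\lambda\equiv 1$, so $\sigma$ is $G$-invariant. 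Finally, the normalization \eqref{eq:normalization} with $n=4$, together with $\sigma\wedge\overline\sigma=\widetilde\Omega_2^2+\widetilde\Omega_3^2$ and the identities $\widetilde\Omega_i^4=3\,\widetilde\Omega_i^2\wedge\widetilde\Omega_j^2$, pins down $|c|=\tfrac12$. Absorbing the remaining phase of $c$ by the $U(1)$-rotation $\sigma\mapsto e^{i\theta}\sigma$ in the $(\widetilde\Omega_2,\widetilde\Omega_3)$-plane — still a $G$-invariant parallel $\Sp(2)$-structure inducing $g$ — we arrange $c=\tfrac12$, and $(\Omega_1,\Omega_2,\Omega_3):=(\widetilde\Omega_1,\re\sigma,\im\sigma)$ is then the asserted structure.

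The step I expect to be the real obstacle is the $G$-invariance of $\sigma$: it is tempting to argue $g^{*}\sigma=\pm\sigma$ straight from $(g^{*}\sigma)^2=\sigma^2$, but this implication is false for general two-forms, so one is forced through the one-dimensionality of the space of parallel $(2,0)$-forms and the character argument above. A secondary, purely computational nuisance is fixing the constant $c$, and one must take a little care that "parallel $\Sp(2)$-structure inducing $g$" (and $G$-invariance) is genuinely preserved under each of the $\SO(3)$- and $U(1)$-rotations used.
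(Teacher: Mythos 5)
Your proposal is correct and follows the paper's proof almost step for step: invoke Lemma \ref{le:parallelforms}, rotate the triple so that $\Omega=\Omega_1$, use the $J_1$-type decomposition of the parallel four-forms to conclude $\Psi=\lambda\,\Omega_{\bC}^2$ with $\Omega_{\bC}:=\Omega_2+i\Omega_3$, and fix $\left|\lambda\right|=\tfrac{1}{2}$ from the normalization \eqref{eq:normalization} together with $\Omega_i^4=3\,\Omega_i^2\wedge\Omega_j^2$. The only step you treat differently is the $G$-invariance of $\Omega_2,\Omega_3$ (the point you correctly flag as the real issue): the paper differentiates, obtaining $0=\cL_X\Omega_{\bC}\wedge\Omega_{\bC}$, and concludes $\cL_X\Omega_{\bC}=0$ from injectivity of the Lefschetz map before invoking connectedness of $G$; your argument via the one-dimensional space of parallel $(2,0)$-forms and the resulting character $\lambda:G\to\bC^*$ with $\lambda^2\equiv 1$ is an equally valid, global version of the same idea that sidesteps the Lefschetz injectivity.
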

\begin{proof}
By Lemma \ref{le:parallelforms}, there is a parallel $\mathrm{Sp}(2)$-
structure $(\Omega_1,\Omega_2,\Omega_3)\in\left(\Omega^2 M\right)^3$ inducing
the metric $g$ and the space of parallel two-forms on $(M,g)$ is given by
$\spa{\Omega_1,\Omega_2,\Omega_3}$. Hence, we may assume that $\Omega=\Omega_1$.
Moreover, Lemma \ref{le:parallelforms} yields that a complex basis of the 
parallel complex four-forms is given by
$\Omega_1^2$, $ \Omega_1\wedge \Omega_{\bC}$, $\Omega_1\wedge \overline{\Omega
}_{\bC}$, $\Omega_{\bC}^2$, $\Omega_{\bC}\wedge \overline{\Omega}_{\bC}$, $
\overline{\Omega}_{\bC}^2$, where $\Omega_{\bC}:=\Omega_2+i\Omega_3$. Now $
\Omega_{\bC}$ is a $(2,0)$-form, $\Omega_1$ is a $(1,1)$-form and $\Psi$ is a 
$(4,0)$-form with respect to $J_1$ for $J_1,J_2,J_3$ being the associated 
complex structures. Hence, $\Psi=\lambda \Omega_{\bC}^2$ for some constant $\lambda\in \bC$.
The normalization condition \eqref{eq:normalization} reads here
\begin{equation*}
\tfrac{2\left|\lambda\right|^2}{3}\Omega_1^4=\tfrac{\left|\lambda\right|^2}{4}
\left(\Omega_2^4+\Omega_3^4+2\Omega_2^2\wedge \Omega_3^2\right)=\tfrac{1}{4}
\Psi\wedge \overline{\Psi}=\phi(\Psi)=4\phi(\Omega)=\tfrac{1}{6}\Omega_1^4
\end{equation*}
and so gives us $\left|\lambda\right|=\frac{1}{2}$. After a rotation in $\spa{
\Omega_2,\Omega_3}$, we may assume that $\lambda=\tfrac{1}{2}$. Now $\Psi$ is 
$\G$-invariant and so one obtains
\begin{equation*}
0=\cL_X \Psi=\cL_X \Omega_{\bC}\wedge \Omega_{\bC}
\end{equation*}
for all $X\in \mfg=\mathrm{Lie}(\G)$. But this implies $\cL_X \Omega_{\bC}=0$ 
as the Lefschetz map is injective in this case, cf., e.g., \cite{Huy}. Hence, 
$\cL_X \Omega_2=\cL_X \Omega_3=0$. As $\G$ is connected, $\Omega_2$ and $
\Omega_3$ are both $\G$-invariant.
\end{proof}
Next, we introduce the mentioned hypo $\mathrm{Sp}(1)$-structures.
\begin{definition}
Let $M$ be a seven-dimensional manifold. Then an \emph{$\mathrm{Sp}(1)$-
structure} on $M$ is given by
$(\alpha_1,\alpha_2,\alpha_3,\omega_1,\omega_2,\omega_3)\in (\Omega^1 M)^3
\times (\Omega^2 M)^3$ with model tensors
\begin{equation*}
\left(e^7,-e^6,-e^5,e^{12}+e^{34},e^{13}-e^{24},-e^{14}-e^{23}\right)\in \left
(\left(\bR^7\right)^*\right)^3\times \left(\Lambda^2 \left(\bR^7\right)^*
\right)^3,
\end{equation*}
and it is called \emph{hypo} if
\begin{equation*}
d(\omega_1-\alpha_2\wedge \alpha_3)=0,\quad d(\omega_2-\alpha_3\wedge \alpha_1
)=0, \quad d(\omega_3-\alpha_1\wedge \alpha_2)=0.
\end{equation*}
A hypo $\mathrm{Sp}(1)$-structure induces a Riemannian metric $g=g_{((\alpha_1
,\alpha_2,\alpha_3,\omega_1,\omega_2,\omega_3)}$ on $M$ by $g=g_{(\omega_1,
\omega_2,\omega_3)}\oplus \sum_{i=1}^3 \alpha_i\otimes \alpha_i$, where $g_{(
\omega_1,\omega_2,\omega_3)}$ is the Riemannian metric induced on the 
distribution $\cD_4:=\bigcap_{i=1}^3 \ker(\alpha_i)$ of rank four by the 
almost hyper-Hermitian structure $(\omega_1,\omega_2,\omega_3)$ on $\cD_4$.
\end{definition}
Any hypo $\mathrm{Sp}(1)$-structure induces a hypo $\SU(3)$-structure as 
follows.
\begin{lemma}\label{le:Sp1Su3}
Let $M$ be a seven-dimensional manifold and $(\alpha_1,\alpha_2,\alpha_3,
\omega_1,\omega_2,\omega_3)$ be a hypo $\mathrm{Sp}(1)$-structure on $M$. Then
\begin{equation*}
\left(\alpha,\omega,\psi\right):=\left(\alpha_1,\omega_1-\alpha_2\wedge \alpha
_3,-\omega_3\wedge \alpha_2-\omega_2\wedge \alpha_3+i\left(\omega_2\wedge 
\alpha_2-\omega_3\wedge \alpha_3\right)\right)
\end{equation*}
is a hypo $\SU(3)$-structure on $M$.
\end{lemma}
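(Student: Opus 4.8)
The plan is to split the claim into a pointwise part — that $(\alpha,\omega,\psi)$ has the model tensor of a seven-dimensional $\SU(3)$-structure — and a global part — that $d\omega=0$ and $d(\alpha\wedge\psi)=0$.

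For the pointwise part I would fix $x\in M$ and an adapted basis $(e_1,\dots,e_7)$ for the hypo $\mathrm{Sp}(1)$-structure, so that at $x$ one has $\alpha_1=e^7$, $\alpha_2=-e^6$, $\alpha_3=-e^5$, $\omega_1=e^{12}+e^{34}$, $\omega_2=e^{13}-e^{24}$, $\omega_3=-e^{14}-e^{23}$. Plugging these into the formulas for $\alpha,\omega,\psi$ and using $e^j_{\bC}=e^{2j-1}-ie^{2j}$, a direct expansion gives $\alpha=e^7$, $\omega=e^{12}+e^{34}+e^{56}$, $\re(\psi)=e^{135}-e^{245}-e^{146}-e^{236}$ and $\im(\psi)=-e^{136}+e^{246}-e^{145}-e^{235}$; comparing with the expansion of $e^1_{\bC}\wedge e^2_{\bC}\wedge e^3_{\bC}$ shows $\psi=e^1_{\bC}\wedge e^2_{\bC}\wedge e^3_{\bC}$. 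Thus $(\alpha,\omega,\psi)$ has model tensor $(\alpha^0,\omega^0,\psi^0)$ and is an $\SU(3)$-structure; the chosen frame is adapted to both structures simultaneously, so they induce the same metric (consistent with $\mathrm{Sp}(1)\subseteq\SU(3)\subseteq\SO(7)$).

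For the hypo conditions I would work with the three two-forms $\eta_1:=\omega_1-\alpha_2\wedge\alpha_3$, $\eta_2:=\omega_2-\alpha_3\wedge\alpha_1$, $\eta_3:=\omega_3-\alpha_1\wedge\alpha_2$, which are closed by the very definition of a hypo $\mathrm{Sp}(1)$-structure. Since $\omega=\eta_1$, the equation $d\omega=0$ is immediate. For $d(\alpha\wedge\psi)$ I would first record the complex reformulation $\psi=i\,\omega_{\bC}\wedge\alpha_{\bC}$ with $\omega_{\bC}:=\omega_2+i\omega_3$ and $\alpha_{\bC}:=\alpha_2+i\alpha_3$, together with $\eta_{\bC}:=\eta_2+i\eta_3=\omega_{\bC}-i\,\alpha_1\wedge\alpha_{\bC}$. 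Squaring and using that $\alpha_1\wedge\alpha_{\bC}$ is decomposable, $\eta_{\bC}^2=\omega_{\bC}^2-2i\,\omega_{\bC}\wedge\alpha_1\wedge\alpha_{\bC}$, and $\omega_{\bC}^2=(\omega_2^2-\omega_3^2)+2i\,\omega_2\wedge\omega_3=0$ because the model tensors of $\omega_2,\omega_3$ involve only $e^1,\dots,e^4$, where $\omega_2^2=\omega_3^2=2\,e^{1234}$ and $\omega_2\wedge\omega_3=0$. Hence $\alpha\wedge\psi=i\,\alpha_1\wedge\omega_{\bC}\wedge\alpha_{\bC}=i\cdot\tfrac{i}{2}\eta_{\bC}^2=-\tfrac12\,\eta_{\bC}^2$, so $d(\alpha\wedge\psi)=-\tfrac12\,d(\eta_{\bC}^2)=0$ because $d\eta_{\bC}=d\eta_2+i\,d\eta_3=0$. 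This gives that $(\alpha,\omega,\psi)$ is hypo.

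I do not expect a real obstacle here — the argument is essentially bookkeeping. The only point requiring some care is the sign and index accounting in passing to the complex forms $\omega_{\bC},\alpha_{\bC},\eta_{\bC}$; in particular the quaternionic relabelling $\alpha_2=-e^6,\alpha_3=-e^5$ in the model tensor is exactly what makes $\psi$ emerge as the standard $(3,0)$-form and makes $\eta_{\bC}$ equal $\omega_{\bC}-i\,\alpha_1\wedge\alpha_{\bC}$. Combined with the elementary quaternionic identity $\omega_{\bC}^2=0$ (equivalently $\omega_2^2=\omega_3^2$ and $\omega_2\perp\omega_3$ on the rank-four distribution), this collapses the $\alpha_1\wedge\alpha_{\bC}$-term of $\eta_{\bC}^2$ to a single closed square, which is what makes the computation of $d(\alpha\wedge\psi)$ work.
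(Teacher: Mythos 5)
Your proposal is correct and follows essentially the same route as the paper: verify the model tensors in an adapted frame, note $d\omega=0$ is the first hypo $\mathrm{Sp}(1)$ equation, and express $\alpha\wedge\psi$ through the closed forms $\eta_2,\eta_3$ using $\omega_2^2=\omega_3^2$ and $\omega_2\wedge\omega_3=0$. Your identity $\alpha\wedge\psi=-\tfrac12(\eta_2+i\eta_3)^2$ is exactly the paper's formula $\alpha\wedge\psi=\tfrac{\eta_3^2-\eta_2^2}{2}-i\,\eta_2\wedge\eta_3$ written in complex notation.
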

\begin{proof}
Using adapted bases, we see that $\left(\alpha,\omega,\psi\right)$ is an $\SU(
3)$-structure. Moreover, $\omega$ is closed by assumption and, as 
$\omega_2^2=\omega_3^2$ and $\omega_2\wedge \omega_3=0$, we have
\begin{equation*}
\alpha\wedge \psi=\tfrac{(\omega_3-\alpha_1\wedge \alpha_2)^2-(\omega_2-\alpha
_3\wedge \alpha_1)^2}{2}-i(\omega_2-\alpha_3\wedge \alpha_1)\wedge (\omega_3-
\alpha_1\wedge \alpha_2).
\end{equation*}
Hence, $\alpha\wedge \psi$ is closed, too.
\end{proof}
Now we can show the result announced at the beginning of the subsection, 
namely that if the Riemannian manifold obtained by the left-invariant hypo 
flow on a Lie group has holonomy equal to $\mathrm{Sp}(2)$, then it can also 
be obtained by a certain flow of left-invariant hypo $\mathrm{Sp}(1)$-
structures.
\begin{proposition}\label{pro:holonomySp2inducedflow}
Let $G$ be a simply-connected seven-dimensional Lie group with associated Lie 
algebra $\mfg$ and $(\alpha(t),\omega(t),\psi(t))_{t\in I}$ be a solution of 
the hypo flow on $\mfg$. Assume that the Riemannian manifold $(G\times I,g)$ 
obtained by this solution as holonomy equal to $\mathrm{Sp}(2)$. Then there 
exists a one-parameter family of hypo $\mathrm{Sp}(1)$-structures $(\alpha_1(t
),\alpha_2(t),\alpha_3(t),\omega_1(t),\omega_2(t),\omega_3(t))_{t\in I}$ on $
\mfg$ fulfilling the flow equations
\begin{equation}\label{eq:hypoSp1flow}
\left(\omega_i(t)-\alpha_{i+1}(t)\wedge \alpha_{i+2}(t)\right)'=-d\alpha_i(t)
\end{equation}
for $i=1,2,3$ and inducing $(\alpha(t),\omega(t),\psi(t))_{t\in I}$ in the 
sense of Lemma \ref{le:Sp1Su3}. Moreover, $(\Omega_1,\Omega_2,\Omega_3)\in 
\left(\Omega^2 (G\times I)\right)^3$ with
\begin{equation*}
\Omega_i:=\omega_i(t)-\left(\alpha_{i+1}(t)\wedge \alpha_{i+2}(t)+dt\wedge 
\alpha_i(t)\right)
\end{equation*}
for $i=1,2,3$ is a parallel $\mathrm{Sp}(2)$-structure inducing the 
Riemannian metric $g$ on $G\times I$.
\end{proposition}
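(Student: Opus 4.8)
The plan is to run the construction of Proposition~\ref{pro:hypoflow} backwards on the $\Sp(2)$ side. By Proposition~\ref{pro:hypoflow}, the given solution of the hypo flow defines a left-invariant, hence $G$-invariant (for $G$ acting on $G\times I$ by left translations in the first factor), parallel $\SU(4)$-structure $(\Omega,\Psi)$ on $G\times I$ with
\begin{equation*}
\Omega=\omega(t)+\alpha(t)\wedge dt,\qquad \Psi=\psi(t)\wedge\bigl(\alpha(t)-i\,dt\bigr),
\end{equation*}
and induced metric $g=g_{(\alpha(t),\omega(t),\psi(t))}+dt^2$. Since $G$ is simply-connected it is connected, so the hypothesis $Hol(g)=\Sp(2)$ lets us apply Lemma~\ref{le:SU4Sp2}: there is a $G$-invariant parallel $\Sp(2)$-structure $(\Omega_1,\Omega_2,\Omega_3)\in(\Omega^2(G\times I))^3$ inducing $g$ with $\Omega_1=\Omega$ and $\Psi=\tfrac12(\Omega_2+i\Omega_3)^2$. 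Being parallel, these tensor fields are smooth, so their restrictions to the slices $G\times\{t\}$ constitute smooth one-parameter families of left-invariant forms on $G$.

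Next I would split off the $\partial_t$-direction. Because $g=g_t+dt^2$, the vector $\partial_t$ is a unit vector $g$-orthogonal to $\mfg=T_eG$ at every point, so since $\Sp(2)\subseteq\SO(8)$ acts transitively on the unit sphere of $\bR^8$, for each $t$ one may choose an adapted basis $(e_1,\dots,e_8)$ for $(\Omega_1,\Omega_2,\Omega_3)$ with $e_8=\partial_t$ and $(e_1,\dots,e_7)$ a basis of $\mfg$. I then set, for $i\in\{1,2,3\}$ with indices read modulo $3$,
\begin{equation*}
\alpha_i(t):=-\partial_t\hook\Omega_i\big|_{G\times\{t\}},\qquad
\omega_i(t):=\iota_t^*\Omega_i+\alpha_{i+1}(t)\wedge\alpha_{i+2}(t),
\end{equation*}
where $\iota_t\colon G\hookrightarrow G\times\{t\}$. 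A direct evaluation of these formulas in the chosen adapted basis against the model tensors of an $\Sp(1)$-structure shows that $(\alpha_1(t),\alpha_2(t),\alpha_3(t),\omega_1(t),\omega_2(t),\omega_3(t))$ is an $\Sp(1)$-structure on $\mfg$; it is left-invariant and smooth in $t$ since $\Omega_i$ and $\partial_t$ are. The standard decomposition $\Omega_i=\iota_t^*\Omega_i+dt\wedge(\partial_t\hook\Omega_i)$ of a two-form then reads $\Omega_i=\omega_i(t)-\bigl(\alpha_{i+1}(t)\wedge\alpha_{i+2}(t)+dt\wedge\alpha_i(t)\bigr)$, so the ``moreover'' assertion is immediate once the rest is proved.

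The hypo property and the flow equations~\eqref{eq:hypoSp1flow} come out of $d\Omega_i=0$. Writing $\bar\Omega_i(t):=\omega_i(t)-\alpha_{i+1}(t)\wedge\alpha_{i+2}(t)=\iota_t^*\Omega_i$ and using $d=d_{\mfg}+dt\wedge\tfrac{\partial}{\partial t}$ on $G\times I$, we get $0=d\Omega_i=d_{\mfg}\bar\Omega_i(t)+dt\wedge\bigl(\bar\Omega_i'(t)+d_{\mfg}\alpha_i(t)\bigr)$, whose two components yield $d_{\mfg}\bar\Omega_i(t)=0$ (each $\Sp(1)$-structure is hypo) and $\bar\Omega_i'(t)=-d\alpha_i(t)$ (this is~\eqref{eq:hypoSp1flow}). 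Finally, to check that the family induces $(\alpha(t),\omega(t),\psi(t))$ in the sense of Lemma~\ref{le:Sp1Su3}: from $\Omega_1=\Omega$ one reads off $\alpha_1(t)=-\partial_t\hook\Omega=\alpha(t)$ and $\omega_1(t)-\alpha_2(t)\wedge\alpha_3(t)=\iota_t^*\Omega=\omega(t)$; and expanding $\Psi=\tfrac12(\Omega_2+i\Omega_3)^2$ with $\Omega_j=\bar\Omega_j(t)-dt\wedge\alpha_j(t)$, comparing the $dt$-components with those of $\Psi=\psi(t)\wedge(\alpha(t)-i\,dt)$, and using $\bar\Omega_j(t)=\omega_j(t)-\alpha_{j+1}(t)\wedge\alpha_{j+2}(t)$ to cancel the terms containing $\alpha_1(t)\wedge\alpha_2(t)\wedge\alpha_3(t)$, gives
\begin{equation*}
\psi(t)=-\omega_3(t)\wedge\alpha_2(t)-\omega_2(t)\wedge\alpha_3(t)+i\bigl(\omega_2(t)\wedge\alpha_2(t)-\omega_3(t)\wedge\alpha_3(t)\bigr),
\end{equation*}
which is exactly the $(3,0)$-form of Lemma~\ref{le:Sp1Su3}.

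I expect the only genuinely nontrivial point to be the linear-algebraic claim in the second step, that the contraction-plus-correction formulas really reproduce the $\Sp(1)$ model tensors. This rests on being able to put $\partial_t$ into the last slot of an adapted basis for $(\Omega_1,\Omega_2,\Omega_3)$ — which is where the transitivity of $\Sp(2)$ on $S^7$ and the orthogonality $\partial_t\perp\mfg$ enter — followed by a short bookkeeping with the explicit model forms of the $\Sp(2)$- and $\Sp(1)$-structures. Everything else (regularity in $t$, the closedness computation, and the matching with Lemma~\ref{le:Sp1Su3}) is formal.
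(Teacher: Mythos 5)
Your proposal is correct and follows essentially the same route as the paper: invoke Proposition \ref{pro:hypoflow} and Lemma \ref{le:SU4Sp2} to get a $G$-invariant parallel $\mathrm{Sp}(2)$-structure with $\Omega_1=\Omega$ and $\Psi=\tfrac12(\Omega_2+i\Omega_3)^2$, define $\alpha_i(t)$ by contracting $\Omega_i$ with $\partial_t$ (equivalently $-J_i^*dt$, as in the paper), verify the $\mathrm{Sp}(1)$ model tensors via an adapted basis with $e_8=\partial_t$ using transitivity of $\mathrm{Sp}(2)$ on $S^7$, and read off the hypo condition and the flow equations from $d\Omega_i=0$. The only cosmetic difference is that you package the tangential part as $\iota_t^*\Omega_i$ rather than isolating $\omega_i(t)$ as the component annihilating $\cD=\spa{\partial_t,J_1\partial_t,J_2\partial_t,J_3\partial_t}$; the content is identical.
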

\begin{proof}
The parallel $\SU(4)$-structure $(\Omega,\Psi)=\left(\omega(t)+\alpha(t)\wedge
 dt,\psi(t)\wedge (\alpha(t)-idt)\right)$ on $G\times I$ obtained
by the hypo flow is $G$-invariant under the natural $G$-action by left 
multiplication on the first factor as $(\alpha(t),\omega(t),\psi(t))_{t\in I}$
were left-invariant. As the holonomy of the induced Riemannian metric $g$ is 
equal to $\mathrm{Sp}(2)$, Lemma \ref{le:SU4Sp2} gives us the existence
of a $G$-invariant parallel $\mathrm{Sp}(2)$-structure $(\Omega_1,\Omega_2,
\Omega_3)\in\left(\Omega^2 (G\times I)\right)^3$ inducing the metric
$g$ such that $\Omega=\Omega_1$ and $\Psi=\frac{1}{2}\left(\Omega_2+i\Omega_3
\right)^2$. Denote by $J_1,J_2,J_3$ the associated complex structures and
set $\alpha_i:=-J_i^*dt\in \Omega(G\times I)$ for $i=1,2,3$. As $TG$ 
is orthogonal to $\spa{\partial_t}$, $\partial_t$ is in the kernel of all
$\alpha_i$ and so $\alpha_i=\alpha_i(t)$ is a time-dependent one-form on $G$, 
which is also left-invariant as $J_i$ is $G$-invariant.
Using the quaternionic relations, we see that restricted to $\cD:=\spa{
\partial_t, J_1 \partial_t,J_2 \partial_t, J_3 \partial_t}\subseteq T(G\times 
I)$ the two-form $\Omega_i$ is equal to $-\left(\alpha_{i+1}(t)\wedge 
\alpha_{i+2}(t)+dt\wedge \alpha_i(t)\right)$ for all $i=1,2,3$. So we may write
\begin{equation*}
\Omega_i=\omega_i(t)-\left(\alpha_{i+1}(t)\wedge \alpha_{i+2}(t)+dt\wedge 
\alpha_i(t)\right)
\end{equation*}
for certain $\omega_i(t)\in \Omega^2 G$ with kernel $\cD$. The $G$-invariance 
of $\Omega_i$ gives us the left-invariance of $\omega_i(t)$. This shows that $
(\alpha_1(t),\alpha_2(t),\alpha_3(t),\omega_1(t),\omega_2(t),\omega_3(t))$ is 
an $\mathrm{Sp}(1)$-structure on $\mfg$ for all $t\in I$. Namely, $\mathrm{Sp}
(2)$ acts transitively on the seven-sphere and so we may assume that at a 
point $(e,t)\in G\times I$ we have an adapted basis $(e_1,\ldots,e_8)$ for $(
\Omega_1,\Omega_2,\Omega_3)$ with $dt=e^8$ at this point. But at this point, $\alpha_1=-J_1^*e^8=e^7$, $\alpha_2=-J_2^*e^8=-e^6$ and $\alpha_3=-
J_3^*e^8=-e^5$, we see that $\alpha_1(t)=e^7$, $\alpha_2(t)=-e^6$ and $\alpha_3(
t)=-e^5$. Inserting this into the expression for $\Omega_i$ in terms of an 
adapted basis, we get $\omega_1(t)=e^{12}+e^{34}$, $\omega_2(t)=e^{13}-e^{24}$
 and $\omega_3(t)=-e^{12}-e^{34}$.

Moreover, writing $d_8$ for the exterior derivative on $G\times I$ and $d$ for the one on $G$, the closure of $\Omega_i$ gives us
\begin{equation*}
\begin{split}
0=d_8\Omega_i=&\,dt\wedge \left(\omega_i(t)-\alpha_{i+1}(t)\wedge \alpha_{i+2}(t)\right)'+dt\wedge d\alpha_i(t)\\
&\, + d\left(\omega_i(t)-\alpha_{i+1}(t)\wedge \alpha_{i+2}(t)\right)
\end{split}
\end{equation*}
i.e.
\begin{equation*}
d\left(\omega_i(t)-\alpha_{i+1}(t)\wedge \alpha_{i+2}(t)\right)=0,\quad \left(\omega_i(t)-\alpha_{i+1}(t)\wedge \alpha_{i+2}(t)\right)'=-\alpha_i(t).
\end{equation*}
So all $\mathrm{Sp}(1)$-structures are hypo and fulfill the flow equations 
\eqref{eq:hypoSp1flow}. Finally, writing out the equalities $\Omega=\Omega_1$ 
and $\Psi=\frac{1}{2}\left(\Omega_2+i\Omega_3\right)^2$, one checks that $(
\alpha_1(t),\alpha_2(t),\linebreak \alpha_3(t),\omega_1(t),\omega_2(t),\omega_3(t))_{t\in I}$  induces $(\alpha(t),\omega(t),\psi(t))_{t\in I}$
in the sense of Lemma \ref{le:Sp1Su3}.
\end{proof}
Next, we show that hypo $\mathrm{Sp}(1)$-structures on seven-dimensional Lie 
algebras $\mfg$ inducing hypo $\SU(3)$-structures with invariant intrinsic 
torsion or of class $V_1(\lambda_2)\oplus V_{12}$ are of a very 
restricted form and that also the Lie algebra $\mfg$ is of special type.
\begin{lemma}\label{le:specialhypoSp1structures}
Let $(\alpha_1,\alpha_2,\alpha_3,\omega_1,\omega_2,\omega_3)$ be a hypo $
\mathrm{Sp}(1)$-structure which induces a hypo $\SU(3)$-structure with 
invariant intrinsic torsion or of class $V_1(\lambda_2)\oplus V_{12}$. Then $d
\omega_i=0$ for all $i=1,2,3$, $d\alpha_1=0$ and $(d\alpha_2,d\alpha_3)^t=
\alpha_1\wedge A\cdot (\alpha_2,\alpha_3)^t$ for some $A\in \mathfrak{sl}(2,
\bR)$. Moreover, $\mfg=V_4\rtimes V_3$ with $V_4:=\bigcap_{i=1}^3\ker(\alpha_i
)$ being a four-dimensional Abelian ideal and $V_3:=\ker(\omega_1)$ being a 
three-dimensional solvable unimodular Lie subalgebra acting on $V_4$ skew-symmetrically.
\end{lemma}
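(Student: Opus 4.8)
Fix an adapted basis $(e_1,\dots,e_7)$, so that the six structure forms coincide with their model tensors and the induced $\SU(3)$-structure is the standard one. Then $V_4=\cD_4=\spa{e_1,\dots,e_4}$, $V_3=\ker(\omega_1)=\spa{e_5,e_6,e_7}$, $\alpha:=\alpha_1=e^7$, $\omega:=\omega_1-\alpha_2\wedge\alpha_3=e^{12}+e^{34}+e^{56}$, and by Lemma~\ref{le:Sp1Su3} one has $\psi=\sigma\wedge\eta$ with $\sigma:=\omega_2+i\omega_3$ a $(2,0)$-form supported on $V_4$ and $\eta:=i\alpha_2-\alpha_3=e^5-ie^6$ the remaining $(1,0)$-form on $\cD_\alpha$. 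A direct check shows that the two hypo $\mathrm{Sp}(1)$-equations $d(\omega_2-\alpha_3\wedge\alpha_1)=0$, $d(\omega_3-\alpha_1\wedge\alpha_2)=0$ are together equivalent to $d(\sigma-\alpha\wedge\eta)=0$, while the third is $d\omega=0$; and by Proposition~\ref{pro:inttorshypo} the hypothesis on the torsion class means $\beta=0$, $\tilde\omega=0$ (hence $d\alpha_1=\lambda_1\omega$), together with either $\gamma=0$, i.e.\ $d\psi=i\lambda_2\,\alpha\wedge\psi$, in the invariant-torsion case, or $\lambda_1=0$ in the $V_1(\lambda_2)\oplus V_{12}$ case. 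Combining these yields the two basic identities
\[
d\sigma=\lambda_1\,\omega\wedge\eta-\alpha_1\wedge d\eta,\qquad d\psi=\sigma\wedge d\eta-\alpha_1\wedge d\eta\wedge\eta,
\]
the second by expanding $d\psi=d\sigma\wedge\eta+\sigma\wedge d\eta$ and using $\eta\wedge\eta=0$.

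The plan is to read off the conclusions from these identities together with $d\omega=0$, $d\omega_1=d(\alpha_2\wedge\alpha_3)$ and the $\psi$-equation, decomposing each $de^j$ according to $\Lambda^2\mfg^*=\Lambda^2 V_4^*\oplus(V_4^*\wedge V_3^*)\oplus\Lambda^2 V_3^*$ and, inside $\cD_\alpha=V_4\oplus\spa{e_5,e_6}$, by $(p,q)$-type for $J_{(\alpha,\psi)}$. Writing $d\eta=\kappa+\alpha_1\wedge\theta$ with $\kappa\in\Lambda^2\cD_\alpha^*$, $\theta\in\cD_\alpha^*$, and noting that $d\psi=i\lambda_2\,\alpha\wedge\psi+\alpha\wedge\zeta$ has no $\alpha_1$-free part, the $\psi$-equation splits into $\sigma\wedge\kappa=0$ and $\sigma\wedge\theta-\kappa\wedge\eta=i\lambda_2\,\psi+\zeta$; since $\sigma=e^1_{\bC}\wedge e^2_{\bC}$ on $V_4$, the first says precisely that $\kappa$ lies in the ideal generated by the $(1,0)$-forms $e^1_{\bC},e^2_{\bC}$ of $V_4$. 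Feeding $\kappa$ back into the first basic identity pins the $\Lambda^3\cD_\alpha^*$-component of $d\sigma=d\omega_2+i\,d\omega_3$ to $\lambda_1\,\omega_1\wedge\eta$, while $d\omega=0$ forces the $\Lambda^3 V_4^*$-components of $d\omega_1,d\omega_2,d\omega_3$ to vanish; tracking the surviving components through these relations should force $\lambda_1=0$, $\kappa=0$, and that each $de^j$ with $j\le4$ has no $\Lambda^2 V_4^*$-component. With $\lambda_1=0$, $\kappa=0$ the $\psi$-equation reduces to $\sigma\wedge\theta=i\lambda_2\,\psi+\zeta$; writing $(d\alpha_2,d\alpha_3)^t=\alpha_1\wedge B\,(\alpha_2,\alpha_3)^t$ and expressing $\theta$ through $B$, the comparison of $(3,0)$- and primitive $(2,1)$-types yields $A:=B\in\mathfrak{sl}(2,\bR)$ (indeed $B=\lambda_2\left(\begin{smallmatrix}0&-1\\1&0\end{smallmatrix}\right)$ in the invariant-torsion case). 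Hence $d\alpha_1=0$, $(d\alpha_2,d\alpha_3)^t=\alpha_1\wedge A\,(\alpha_2,\alpha_3)^t$, $d\sigma=0$ so $d\omega_2=d\omega_3=0$, and $d\omega_1=d(\alpha_2\wedge\alpha_3)=0$.

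The remaining assertions are then formal. From $d\alpha_1=0$ and $d\alpha_2,d\alpha_3\in\alpha_1\wedge\spa{\alpha_2,\alpha_3}$ we get $de^5,de^6,de^7\in\Lambda^2 V_3^*$, so $V_4=\bigcap_i\ker(\alpha_i)$ is an ideal; the $\Lambda^2 V_3^*$-parts exhibit $V_3$ as a three-dimensional Lie algebra with $[e_5,e_6]=0$ in which every $\operatorname{ad}(Z)|_{V_3}$, $Z\in V_3$, is traceless, hence $V_3$ is solvable and unimodular. Since each $de^j$ with $j\le4$ has no $\Lambda^2 V_4^*$-component (above) and no $\Lambda^2 V_3^*$-component (by $d\omega_2=d\omega_3=0$, which forces the bracket of $V_3$ to land in $V_3$), it lies in $V_4^*\wedge V_3^*$; thus $V_4$ is abelian, $V_3$ is a subalgebra, and $\mfg=V_4\rtimes V_3$. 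Finally $d\omega_1=d\omega_2=d\omega_3=0$ together with $de^j\in V_4^*\wedge V_3^*$ says that every $\operatorname{ad}(Z)|_{V_4}$, $Z\in V_3$, annihilates all three Kähler forms on $V_4$, hence lies in $\mathfrak{sp}(1)=\mathfrak{su}(2)\subseteq\mathfrak{so}(4)$; in particular it is skew-symmetric for $g|_{V_4}$.

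The main obstacle is the middle step: turning the single $\SU(3)$-torsion equation for $\psi$, together with the three hypo $\mathrm{Sp}(1)$-equations, into the component system for $de^1,\dots,de^7$ and verifying that it forces exactly $\lambda_1=0$, $\kappa=0$ and $de^j\in V_4^*\wedge V_3^*$ — this is the only place where the precise torsion class, rather than merely $\beta=\tilde\omega=0$, is used, and where the existence assumption on the hypo $\mathrm{Sp}(1)$-structure genuinely restricts $\mfg$. Organising the computation by $(p,q)$-type and by the splitting $\cD_\alpha=V_4\oplus\spa{e_5,e_6}$ is what makes it tractable, since $\sigma\wedge(\cdot)$ and $\eta\wedge(\cdot)$ act transparently on the resulting weight spaces.
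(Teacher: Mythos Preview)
Your packaging via the complex forms $\sigma=\omega_2+i\omega_3$ and $\eta=i\alpha_2-\alpha_3$, together with the factorisation $\psi=\sigma\wedge\eta$, is tidy and correctly yields the identities $d\sigma=\lambda_1\,\omega_1\wedge\eta-\alpha_1\wedge d\eta$ and $d\psi=\sigma\wedge\kappa+\alpha_1\wedge(\sigma\wedge\theta-\kappa\wedge\eta)$, as well as $\sigma\wedge\kappa=0$. The final ``formal'' paragraph is also fine once the middle step is granted. So the architecture is sound.

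The gap is exactly where you flag it. From $\sigma\wedge\kappa=0$ you only get that $\kappa$ lies in the ideal generated by $e^1_{\bC},e^2_{\bC}$; this is very far from $\kappa=0$, and the $(p,q)$-type bookkeeping by itself does not close that gap. The paper closes it with two inputs you do not invoke and which I do not see how to avoid:
\begin{itemize}
\item For class $V_1(\lambda_2)\oplus V_{12}$, the paper appeals to the classification in Theorem~\ref{th:ClassV1lambda2V12}, which yields the strong structural constraint $(d\nu)^2\in\alpha_1\wedge\Lambda^3\mfg^*$ for every $\nu\in\mfg^*$. Applied to $\alpha_2,\alpha_3$ this kills the $\Lambda^-$-pieces $\omega_2^-,\omega_3^-$ and, via the relations coming from differentiating $\omega_i\wedge\omega_j=\delta_{ij}\omega_1^2$, forces all the mixed components $\gamma_i^j$ to vanish. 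In your language this is precisely what is needed to upgrade $\sigma\wedge\kappa=0$ to $\kappa=0$ and to push $\theta$ into $\spa{\alpha_2,\alpha_3}$.
\item For the invariant-torsion case, the paper does \emph{not} get $\lambda_1=0$ from the torsion equations alone: after reducing $d^2\alpha_2=0$ to $\lambda_1(2\lambda_1+\lambda_2)=0$, it invokes Lemma~\ref{le:exludeintrinsictorsion} (no hypo $\SU(3)$-structure on a Lie algebra with $\lambda_1\lambda_2<0$) to rule out $\lambda_2=-2\lambda_1\neq0$. Your outline simply asserts ``should force $\lambda_1=0$'', but the obstruction is genuinely global/Lie-theoretic, not a $(p,q)$-type count.
\end{itemize}
A secondary point: you also implicitly need the quadratic relations $\omega_i\wedge\omega_j=\delta_{ij}\omega_1^2$ (equivalently $\sigma\wedge\bar\sigma=2\omega_1^2$, $\sigma^2=0$) and their differentials; these are what the paper uses to get $\beta_i^j$ skew and to tie the various $\Lambda^{3,2}$-components together (your Equation~\eqref{eq:gammas}). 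If you rewrite the paper's proof in your complex notation you will see these relations doing the same work; they are not replaced by the $\psi$-equation.

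In short: your setup and endgame are correct, but the decisive middle step cannot be completed by $(p,q)$-type alone. You need to feed in Theorem~\ref{th:ClassV1lambda2V12} in the first case and Lemma~\ref{le:exludeintrinsictorsion} in the second, exactly as the paper does.
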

\begin{proof}
We assume first, slightly more general, that $(\alpha_1,\alpha_2,\alpha_3,
\omega_1,\omega_2,\omega_3)$ is a hypo $\mathrm{Sp}(1)$-structure which 
induces a hypo $\SU(3)$-structure of class $2V_1\oplus V_{12}$. Note that we 
have a splitting $\mfg=V_4\oplus V_3$ into two subspaces $V_4$ and $V_3$ of 
dimension four and three, respectively, where $V_4$ is the subspace 
annihilated by $\alpha_1,\alpha_2,\alpha_3$ and $V_3$ is the subspace 
annihilated by $\omega_1,\omega_2,\omega_3$. Then $\mfg^*=V_4^*\oplus V_3^*$ 
and we set $\Lambda^{i,j}:=\Lambda^i V_4^*\wedge \Lambda^j V_3^*$, denote by
by $\eta_{i,k-i}$ the $\Lambda^{i,k-i}$-part 
of a $k$-form $\eta$ and denote by $\Lambda^-$ the three-dimensional subspace of $\Lambda^{2,0}$ 
consisting of those element whose wedge product with all $\omega_i$, $i=1,2,3$
, is zero.
Observe now that $d\omega_i\in \alpha_{i+1}\wedge \Lambda^2 \mfg^*+\alpha_{i+2
}\wedge \Lambda^2 \mfg^*$ for all $i=1,2,3$ by the hypo equations. Hence,
\begin{equation*}
d\omega_i=\sum_{j=1}^3 \beta_i^j\wedge \omega_j +\tau_i +\nu_i
\end{equation*}
with $\beta_{i}^j\in \spa{\alpha_{i+1},\alpha_{i+2}}$, $\tau_i\in \Lambda^-
\wedge \Lambda^{0,1}$ and $\nu_i\in \Lambda^{1,2}$. Moreover, one has $\omega_
i\wedge \omega_j=\delta_{ij}\omega_1^2$ for all $i,j=1,2,3$. Differentiating this and looking at 
the $\Lambda^{4,1}$-part, we get $\beta:=\beta_1^1=\beta_2^2=\beta_3^3$ and $
\beta_{i}^j=-\beta_j^i$ for all $i\neq j$. But then $\beta_{i}^j\in \spa{\alpha_{i+
1},\alpha_{i+2}}$ implies $\beta=0$ and $\beta_i^{i+1}\in \spa{\alpha_{i+2}}$ 
for all $i=1,2,3$. Define $b_1,b_2,b_3\in \bR$ by $\beta_1^2=b_3 \alpha_3$, $
\beta_3^1=b_2 \alpha_2$ and $\beta_2^3=b_1 \alpha_1$. Then, as $d\alpha_1=
\lambda_1 (\omega_1-\alpha_2\wedge \alpha_3)$, we get
\begin{equation*}
\begin{split}
0= d\left(\omega_2-\alpha_3\wedge \alpha_1\right) &= -b_3\alpha_3\wedge \omega
_1+b_1 \alpha_1\wedge \omega_3+\tau_2 +\nu_2 -d\alpha_3\wedge \alpha_1+\lambda_1 \alpha_3\wedge \omega_1,\\
0= d\left(\omega_3-\alpha_1\wedge \alpha_2\right)&=  b_2 \alpha_2\wedge 
\omega_1- b_1 \alpha_1\wedge \omega_2+\tau_3+\nu_3-\lambda_1 \omega_1\wedge \alpha_2+\alpha_1\wedge d\alpha_2.
\end{split}
\end{equation*}
Looking at the $\Lambda^{2,1}$-parts of these equations, we get $b_2=b_3=
\lambda_1$, $(d\alpha_2)_{2,0}=b_1\omega_2+\omega_2^-$, $
(d\alpha_3)_{2,0}=b_1 \omega_3+\omega_3^-$ for certain $\omega_2^-,\omega_3^-
\in\Lambda^-$ and $\tau_2=\alpha_1\wedge \omega_3^-$, $\tau_3=-\alpha_1
\wedge \omega_2^-$. Furthermore $(d\alpha_2)_{0,2}=c_1 \alpha_1\wedge \alpha_2
+c_2 \alpha_1\wedge \alpha_3$, $(d\alpha_3)_{0,2}=c_3\alpha_1\wedge \alpha_2+c
_4 \alpha_1\wedge \alpha_3$ for certain $c_1,c_2,c_3,c_4\in \bR$ and $\nu_2=
\alpha_1\wedge (d\alpha_3)_{1,1}$, $\nu_3=-\alpha_1\wedge (d\alpha_2)_{1,1}$. 
Moreover,
\begin{equation*}
\begin{split}
0= d\left(\omega_1-\alpha_2\wedge \alpha_3\right)_{2,1} =&\, \lambda_1\left(
\alpha_3\wedge \omega_2-\alpha_2\wedge \omega_3\right)+\tau_1- b_1 \omega_2\wedge \alpha_3-\omega_2^-\wedge \alpha_3\\
&\,+b_1 \alpha_2\wedge 
\omega_3+\alpha_2\wedge \omega_3^-.
\end{split}
\end{equation*}
So $b_1=\lambda_1$ and $\tau_1=\omega_2^-\wedge 
\alpha_3-\alpha_2\wedge \omega_3^-$. Moreover, the $\Lambda^{0,3}$-part of this equation gives us $c_4=-
c_1$. Finally, the $\Lambda^{1,2}$-part gives us $\nu_1=\alpha_3\wedge (d\alpha_2)_{1,1}-\alpha_2\wedge (d
\alpha_3)_{1,1}$. Writing $(d\alpha_i)_{1,1}=
\sum_{j=1}^3 \alpha_j\wedge \gamma_i^j$ for $i=2,3$ with $\gamma_i^j\in 
\Lambda^{1,0}$ for $(i,j)\in \{2,3\}\times \{1,2,3\}$, we obtain $\nu_1=
\alpha_1\wedge \alpha_2\wedge \gamma_3^1-\alpha_1\wedge \alpha_3\wedge \gamma_
2^1-\alpha_2\wedge \alpha_3\wedge (\gamma_3^3+\gamma_2^2)$, $\nu_2=\alpha_1
\wedge \alpha_2 \wedge \gamma_3^2+\alpha_1\wedge \alpha_3\wedge \gamma_3^3$ 
and $\nu_3=-\alpha_1\wedge \alpha_2 \wedge \gamma_2^2-\alpha_1\wedge \alpha_3
\wedge \gamma_2^3$. Now the $\Lambda^{3,2}$-part of the differentials of the equalities $\omega
_1^2=\omega_2^2=\omega_3^2$ gives us $\omega_1\wedge \nu_1=\omega_2\wedge \nu_2
=\omega_3\wedge \nu_3$ and so
\begin{equation}\label{eq:gammas}
\gamma_3^3=-\gamma_2^2,\, \gamma_3^1\wedge \omega_1=\gamma_3^2\wedge 
\omega_2=-\gamma_2^2 \wedge \omega_3,\, \gamma_2^1\wedge \omega_1=-\gamma_3^3\wedge \omega_2=
\gamma_2^3\wedge \omega_3.
\end{equation}
So let us now first discuss the case $V_1(\lambda_2)\oplus V_{12}$. Then 
Theorem \ref{th:ClassV1lambda2V12} implies that for any $\eta\in \mfg$, the two-form
$d\eta$ fulfills $(d\eta)^2\in \alpha_1\wedge \Lambda^2 \mfg^*$. Now
\begin{equation*}
d\alpha_2=\omega_2^-+\alpha_2\wedge \gamma_2^2+\alpha_3\wedge \gamma_2^3+
\alpha_1\wedge (\gamma_2^1+c_1 \alpha_2+c_2 \alpha_3),\\
\end{equation*}
and so, as $\omega_2^-$ has rank four unless it is zero, we get $\omega_2^-=0$ and
that $\gamma_2^2$ and $\gamma_2^3$ are linearly dependent. Similarly, looking at $d\alpha_3=0$, we get
$\omega_3^-=0$ and that $\gamma_3^2$ and $\gamma_3^3$ are linearly dependent.
As $\gamma_3^3=-\gamma_2^2$ by Equation \eqref{eq:gammas},
the same equation implies that all $\gamma_i^j$ have to zero. This gives us $d\omega_1=d\omega_2=d\omega_3=0$ and that $d\alpha_2
$ and $d\alpha_3$ are of the claimed form. Now $V_4$ is a four-dimensional ideal in $\mfg$ admitting the hyperK\"
ahler structure $(\omega_1,\omega_2,\omega_3)$ and so has to be Abelian, cf., 
e.g., \cite{BDF}. Note that $d\omega_1=0$ already implies that $[V_3,V_3]
\subseteq V_3$, i.e. that $V_3$ is a Lie subalgebra. Finally, $V_3$ acts skew-symmetrically on $V_4$ as it preserves $(\omega_1,\omega_2,\omega_3)$ and so 
also $g_{(\omega_1,\omega_2,\omega_3)}$. This shows the statement for this 
case.

Now we have to consider the case that the induced hypo $\SU(3)$-structure has invariant intrinsic torsion. We show that then $\lambda_1=0$ 
and so we are in the situation of the just discussed case. First of all, 
Lemma \ref{le:Sp1Su3} gives us
\begin{equation*}
\lambda_2 (\alpha_1\wedge \alpha_2\wedge \omega_2-\alpha_1\wedge \alpha_3
\wedge \omega_3)=d\omega_3\wedge \alpha_2+\omega_3\wedge d\alpha_2+d\omega_2
\wedge \alpha_3+\omega_2\wedge d\alpha_3
\end{equation*}
Looking at the $\Lambda^{2,2}$-part, we get $\tau_2=\tau_3=0$, i.e. $\omega_2
^-=\omega_3^-=0$ as well as $c_1=0$ and $c_3=-c_2=\lambda_1+\lambda_2$. 
Thus, by Equation \eqref{eq:gammas},
\begin{equation*}
\begin{split}
0&=(d^2\alpha_2)_{3,0}=d\left(\lambda_1 \omega_2-(\lambda_1+\lambda_2)\alpha_
1\wedge \alpha_3+\sum_{j=1}^3 \alpha_j\wedge \gamma_2^j\right)_{3,0}=\lambda_1
 \sum_{j=1}^3 \omega_j\wedge \gamma_2^j\\
&=3\lambda_1 \omega_1\wedge \gamma_2^1.
\end{split}
\end{equation*}
So either $\lambda_1\neq 0$ or $\gamma_2^1=0$. But in the second case, Equation \eqref
{eq:gammas} implies that all $\gamma_i^j$ are zero. Hence,
\begin{equation*}
\begin{split}
0&=d^2\alpha_2=d(\lambda_1 \omega_2-(\lambda_1+\lambda_2)\alpha_1\wedge 
\alpha_3)\\
&=-(\lambda_1+\lambda_2)d(\omega_2-\alpha_3\wedge \alpha_1)+(2\lambda_1+\lambda
_2)d\omega_2\\
&=\lambda_1(2\lambda_1+\lambda_2)(\omega_3\wedge \alpha_1-\omega_1\wedge 
\alpha_3),
\end{split}
\end{equation*}
and we get again $\lambda_1=0$ as $\lambda_2=-2\lambda_1\neq 0$ is impossible by 
Lemma \ref{le:exludeintrinsictorsion}.
\end{proof}

All these results allow us now to exclude holonomy equal to $\mathrm{Sp}(2)$ for the hypo flow with certain initial values.
\begin{thm}\label{th:hyponotequalSp1}
Let $(\alpha,\omega,\psi)$ be a hypo $\SU(3)$-structure on a seven-dimensional Lie algebra $\mfg$ with invariant intrinsic torsion or of class $V_1(\lambda_2)\oplus V_{12}$. Then the hypo flow with initial value $(\alpha,\omega,\psi)$ yields a Riemannian manifold $(G\times I,g)$ with holonomy not equal to $\mathrm{Sp}(2)$. Moreover, if $(\alpha,\omega,\psi)$ has invariant intrinsic torsion with $\lambda_1\neq 0$, then the holonomy of $(G\times I,g)$ is even equal to $\mathrm{SU}(4)$.
\end{thm}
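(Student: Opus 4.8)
The plan is to derive everything by contradiction from the obstructions of Subsection~\ref{subsec:hyperKahler}. Assume the hypo flow with initial value $(\alpha,\omega,\psi)$ produces $(G\times I,g)$ with $Hol(g)=\mathrm{Sp}(2)$. By Proposition~\ref{pro:holonomySp2inducedflow} the resulting hypo flow solution $(\alpha(t),\omega(t),\psi(t))_{t\in I}$ is induced, in the sense of Lemma~\ref{le:Sp1Su3}, by a one-parameter family of hypo $\mathrm{Sp}(1)$-structures $(\alpha_1(t),\alpha_2(t),\alpha_3(t),\omega_1(t),\omega_2(t),\omega_3(t))$ on $\mfg$ satisfying the flow equations \eqref{eq:hypoSp1flow}, and the two-forms $\Omega_i=\omega_i(t)-(\alpha_{i+1}(t)\wedge\alpha_{i+2}(t)+dt\wedge\alpha_i(t))$ assemble into a parallel (hyperK\"ahler) $\mathrm{Sp}(2)$-structure on $G\times I$. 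Since Corollary~\ref{co:hypoflowV1(lambda2)V12} shows that the classes of \emph{invariant intrinsic torsion} and $V_1(\lambda_2)\oplus V_{12}$ are preserved along the hypo flow, Lemma~\ref{le:specialhypoSp1structures} applies to the inducing $\mathrm{Sp}(1)$-structure for every $t\in I$: $d\alpha_1(t)=0$, $d\omega_i(t)=0$ for $i=1,2,3$, the forms $d\alpha_2(t)$ and $d\alpha_3(t)$ lie in $\alpha_1(t)\wedge\spa{\alpha_2(t),\alpha_3(t)}$ with traceless coefficient matrix, and $\mfg=V_4\rtimes V_3$ with $V_4$ a four-dimensional abelian ideal and $V_3$ a three-dimensional unimodular solvable subalgebra acting skew-symmetrically on $V_4$ and preserving the hyperK\"ahler triple induced there.

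First I would settle the case of invariant intrinsic torsion with $\lambda_1\neq 0$, which also yields the final claim of the theorem. The proof of Lemma~\ref{le:specialhypoSp1structures} shows that any hypo $\mathrm{Sp}(1)$-structure inducing a hypo $\SU(3)$-structure with invariant intrinsic torsion forces $\lambda_1=0$; evaluating the inducing $\mathrm{Sp}(1)$-structure at $t=0$ thus contradicts $\lambda_1\neq 0$, so $Hol(g)\neq\mathrm{Sp}(2)$. On the other hand, invariant intrinsic torsion places $(\alpha,\omega,\psi)$ in the class $2V_1\subseteq 2V_1\oplus V_8$, and $d\alpha=\lambda_1\omega$ gives $(d\alpha)^3=\lambda_1^3\,\omega^3\neq 0$; hence Theorem~\ref{th:irreduciblehol} forces $Hol(g)\in\{\mathrm{Sp}(2),\SU(4)\}$. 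Combining the two, $Hol(g)=\SU(4)$.

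It remains to exclude $Hol(g)=\mathrm{Sp}(2)$ when $d\alpha=0$, which covers both the class $V_1(\lambda_2)\oplus V_{12}$ and invariant intrinsic torsion with $\lambda_1=0$. Here the key is the rigidity coming from $V_3$ being solvable: its action on $V_4$ is a homomorphism $V_3\to\mathfrak{sp}(1)\cong\mathfrak{su}(2)$ with solvable, hence at most one-dimensional, image, so $\mfg$ is solvable and a subspace of $V_3$ of dimension at least two acts trivially on $V_4$. I would combine this with the explicit description of the hypo flow in this case (Corollary~\ref{co:hypoflowV1(lambda2)V12}(a) together with the remark following it: $\omega(t)$ is constant and $\alpha(t)$ is a time-dependent positive multiple of $\alpha_0$) and with the $\mathrm{Sp}(1)$-flow equations \eqref{eq:hypoSp1flow}. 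Choosing an orthonormal coframe of $g$ on $G\times I$ adapted to $(\Omega_1,\Omega_2,\Omega_3)$ --- built from $\alpha_1(t),\alpha_2(t),\alpha_3(t),dt$ and a coframe of $V_4$ --- I would compute the Levi-Civita connection and the curvature and show that the curvature operator takes values in a proper subalgebra of $\mathfrak{sp}(2)$; concretely, I expect the metric either to be flat, or at worst to split off a flat factor so that $Hol(g)\subseteq\mathrm{Sp}(1)\times\mathrm{Sp}(1)\subsetneq\mathrm{Sp}(2)$, in either case contradicting $Hol(g)=\mathrm{Sp}(2)$.

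The main obstacle is this last curvature computation. The delicate points are controlling the $t$-dependence of the splitting $\mfg=V_4\rtimes V_3$ and of the forms $\alpha_2(t),\alpha_3(t)$ --- only $\alpha_1(t)$ is manifestly a multiple of $\alpha_0$ --- so that the adapted coframe can be taken to evolve manageably; tracking the interplay between the structure constants of $\mfg$ constrained by Lemma~\ref{le:specialhypoSp1structures} and the $t$-derivatives dictated by \eqref{eq:hypoSp1flow}; and organizing the argument uniformly for the two remaining torsion classes, which differ only in whether $\alpha$ gets rescaled along the flow.
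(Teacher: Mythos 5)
Your setup is the paper's: assume $Hol(g)=\mathrm{Sp}(2)$, invoke Proposition \ref{pro:holonomySp2inducedflow} to get an inducing one-parameter family of hypo $\mathrm{Sp}(1)$-structures, and feed it into Lemma \ref{le:specialhypoSp1structures}. Your treatment of the invariant-intrinsic-torsion case with $\lambda_1\neq 0$ is complete and even shortcuts the paper: since the proof of Lemma \ref{le:specialhypoSp1structures} forces $\lambda_1=0$ for any inducing hypo $\mathrm{Sp}(1)$-structure, the mere existence of such a structure at $t=0$ is already a contradiction, and Theorem \ref{th:irreduciblehol} (applicable since invariant intrinsic torsion with $\lambda_1\neq0$ gives class $2V_1\subseteq 2V_1\oplus V_8$ and $(d\alpha)^3=\lambda_1^3\omega^3\neq 0$) then upgrades the conclusion to $Hol(g)=\SU(4)$ exactly as you say.

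The gap is in the remaining case $d\alpha=0$. You correctly identify the obstacle --- controlling the $t$-dependence of $\alpha_2(t),\alpha_3(t)$ and of the splitting $\mfg=V_4\rtimes V_3$ so that a curvature computation becomes feasible --- but you do not resolve it; the route you sketch (curvature operator landing in a proper subalgebra of $\mathfrak{sp}(2)$, or a flat factor splitting off) is stated as an expectation, not an argument. The idea you are missing is a uniqueness trick: instead of analyzing the family of $\mathrm{Sp}(1)$-structures handed to you by Proposition \ref{pro:holonomySp2inducedflow}, \emph{construct} a solution of the flow equations \eqref{eq:hypoSp1flow} with the same initial value in which $\omega_i(t)\equiv\omega_i$ and $(\alpha_1(t),\alpha_2(t),\alpha_3(t))$ stays inside $\spa{\alpha_1,\alpha_2,\alpha_3}^3$; this is possible because $d\omega_i=0$ reduces \eqref{eq:hypoSp1flow} to an ODE for the $\alpha_i(t)$ alone, and $d\alpha_1=0$, $d\alpha_2,d\alpha_3\in\alpha_1\wedge\spa{\alpha_2,\alpha_3}$ keep that ODE inside the fixed three-dimensional span. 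Any solution of \eqref{eq:hypoSp1flow} induces a solution of the hypo flow, and the latter is unique, so the metric induced by this constructed solution must equal $g$. Hence $g$ is block-diagonal with a $t$-independent block on $V_4$; the Koszul formula then gives $\nabla|_{V_4}=0$ and that $\nabla$ preserves the splitting, so the three constant two-forms $\omega_1,\omega_2,\omega_3$ on $V_4$ are parallel. Since none of them lies in $\spa{\Omega_1,\Omega_2,\Omega_3}$ (compare $dt$-components), this contradicts Lemma \ref{le:parallelforms}. Without this step your proposal does not close the case $V_1(\lambda_2)\oplus V_{12}$, nor invariant intrinsic torsion with $\lambda_1=0$.
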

\begin{proof}
Assume the contrary and let $(\alpha(t),\omega(t),\psi(t))_{t\in I}$ be the 
solution of the hypo flow with initial value $(\alpha,\omega,\psi)$. By 
Proposition \ref{pro:holonomySp2inducedflow}, $(\alpha(t),\omega(t),\psi(t))_{t\in I}$ is 
induced by a one-parameter family of hypo $\mathrm{Sp}(1)$-structure
$(\alpha_1(t),\alpha_2(t),\alpha_3(t),\linebreak \omega_1(t),\omega_2(t),\omega_3(t))_{t\in I}$
fulfilling the flow equations \eqref{eq:hypoSp1flow}. By Lemma \ref{le:specialhypoSp1structures},
the initial value $(\alpha_1,\alpha_2,\alpha_3,
\omega_1,\omega_2,\omega_3)$ fulfills $d\omega_1=d\omega_2=d\omega_3=0$, $d
\alpha_1=0$ and $(d\alpha_2,d\alpha_3)^t=\alpha_1\wedge A(\alpha_2,\alpha_3)^t$
for some $A\in \mathfrak{sl}(2,\bR)$ and $\mfg=V_4\rtimes V_3$ with the four-dimensional Abelian ideal
$V_4=\bigcap_{i=1}^3 \ker(\alpha_i)$ and the three-dimensional Lie subalgebra $V_3=\ker(\omega_1)$ acting 
skew-symmetrically on $V_4$. Then a solution of the hypo flow is given by
$(\alpha_1(t),\alpha_2(t),\alpha_3(t),\omega_1(t),\omega_2(t),\omega_3(t))_{t\in I}$ with $\omega_i(t)\equiv \omega_i$ for 
$i=1,2,3$ and $(\alpha_1(t)\alpha_2(t),\alpha
_3(t))_{t\in I}\in \spa{\alpha_1,\alpha_2,\alpha_3}^3$ being the solution of
$\left(\alpha_{i+1}(t)\wedge \alpha_{i+2}(t)\right)'=d\alpha_i(t)$, $i=1,2,3$ (note that we can recover
all $\alpha_i(t)$ uniquely from the two-forms $\left(\alpha_1(t)\wedge \alpha_2(t),\alpha_2(t)\wedge \alpha_3(t),\alpha_3(t)\wedge \alpha_1(t)\right)$ if
this triple is not too far away from $(\alpha_1\wedge \alpha_2,\alpha_2\wedge \alpha_3,\alpha_3\wedge \alpha_1)$).
We did not determine whether the flow equations \eqref{eq:hypoSp1flow}
with given initial value have a unique solution but we know 
that any solution of \eqref{eq:hypoSp1flow} induces a solution of the hypo 
flow, and this solution is unique. So the induced Riemannian metric on $G
\times \bR$ has to be equal to $g$, the one obtained by the hypo flow. Hence, 
choosing an orthonormal basis $e_1,\ldots,e_4$ of $V_4$ for $g_{(\omega_1,
\omega_2,\omega_3)}$, we obtain
\begin{equation*}
g=\sum_{i=1}^4 e^i\otimes e^i+\sum_{i,j=1}^3
 c_{ij}(t) \alpha_{i}\otimes \alpha_{j} +dt\otimes dt
\end{equation*}
for some symmetric time-dependent matrix $C(t)=(c_{ij
}(t))_{ij}\in \bR^{3\times 3}$. As $\mfg=V_4\rtimes V_3$, $V_4$ is Abelian 
and $V_3$ acts skew-symmetrically, we get from the Koszul formula that $\nabla
_{e_i}=0$ for all $i=1,\ldots,4$, 
$\nabla_{Y}|_{V_4}=\ad(Y)|_{V_4}$ and $\nabla_{Y}(V_3\oplus \spa{\partial_t})
\subseteq V_3\oplus \spa{\partial_t}$ for all $Y\in V_3$ as well as $\nabla_{
\partial_t}|_{V_4}=0$ and $\nabla_{\partial_t}(V_3\oplus \spa{\partial_t})
\subseteq V_3\oplus \spa{\partial_t}$. Hence,
$\nabla_{e_i}\omega_j=0$ and $\nabla_{\partial_t} \omega_j=0$ for all $(i,j)
\in \{1,\ldots, 4\}\times \{1,2,3\}$. Moreover, $\nabla_Y \omega_j=\ad(Y).
\omega_j=Y\hook d\omega_j=0$ for all $Y\in V_3$. Hence, $\omega_1,\omega_2,
\omega_3$ are parallel in contradiction with Lemma \ref{le:parallelforms}. So 
the holonomy of $g$ cannot be equal to $\mathrm{Sp}(2)$.
\end{proof}
\begin{example}\label{ex:intrtorsholSU4}
Let $G$ be a Lie group whose Lie algebra $\mfg$ is one of the two Lie 
algebras of Example \ref{ex:invinttors}. Using the basis $e^1,\ldots,e^7$ of 
left-invariant one-forms on $G$ given in Example \ref{ex:invinttors}, we 
obtain, due to Theorem \ref{th:hyponotequalSp1} and Corollary \ref{co:hypoflowV1(lambda2)V12} (c),
that the Riemannian metric $g$ on $G\times 
\left(-\tfrac{1}{2},\infty\right)$ given by
\begin{equation*}
g=\sum_{i=1}^6 (1+2x) e^i\otimes e^i+\tfrac{2-(1+2x)^4}{(1+2x)^3} e^7\otimes e
^7+\tfrac{(1+2x)^3}{2-(1+2x)^4} dx\otimes dx
\end{equation*}
has holonomy equal to $\SU(4)$.

\end{example}
\section{The left-invariant Hitchin flow}\label{sec:leftinvHitchinflow}
\subsection{Holonomy reduction for the Hitchin flow}\label{subsec:holredHitchinflow}
In this subsection, we show that the Hitchin flow on certain Lie algebras always leads to Riemannian manifolds with holonomy contained in $\SU(4)$ as the initial cocalibrated $\G_2$-structure is induced by a hypo $\SU(3)$-structure in the sense of Lemma \ref{le:hypotococalibrated}. In general, we have
\begin{lemma}\label{le:wheninduced}
Let $\varphi\in \Omega^3 M$ be a cocalibrated $\G_2$-structure on a seven-dimensional manifold $M$. Then $\varphi$ is induced by a hypo $\SU(3)$-structure $(\alpha,\omega,\psi)$ if and only if there exists a unit vector field $X\in \mathfrak{X}(M)$ such that $d(X\hook \varphi)=0$ and $d(X^b\wedge \varphi)=0$.
\end{lemma}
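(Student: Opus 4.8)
The plan is to read off both implications from explicit formulas relating $\varphi$, $\star_{\varphi}\varphi$ and a triple $(\alpha,\omega,\psi)$, the point being that the two closedness conditions on $X$ correspond exactly to the two hypo equations, with cocalibratedness of $\varphi$ handling the one extra term that appears.

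\textbf{Forward direction.} Suppose $\varphi=\omega\wedge\alpha-\hat{\rho}$ for a hypo $\SU(3)$-structure $(\alpha,\omega,\psi)$ and let $X$ be its Reeb vector field, so $X^b=\alpha$ and, since $g_{\varphi}=g_{(\alpha,\omega,\psi)}$ makes $\alpha$ a unit covector, $X$ is a unit vector field. Using $X\hook\omega=0$, $\alpha(X)=1$ and that $\rho,\hat{\rho}$ annihilate $\cD_{\omega}$, a one-line computation with interior products gives $X\hook\varphi=\omega$ and $X^b\wedge\varphi=-\alpha\wedge\hat{\rho}$. The hypo equations $d\omega=0$ and $d(\alpha\wedge\psi)=0$ — the latter being equivalent to $d(\alpha\wedge\rho)=0$ and $d(\alpha\wedge\hat{\rho})=0$ — then give $d(X\hook\varphi)=0$ and $d(X^b\wedge\varphi)=0$.

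\textbf{Converse.} Given a unit vector field $X$, I first build an $\SU(3)$-structure from $\varphi$ and $X$. Set $\alpha:=X^b$. Since $\G_2$ acts transitively on the unit sphere of $\bR^7$ with stabilizer $\SU(3)$, around every point there is an adapted coframe $(f^1,\dots,f^7)$ for $\varphi$ with $f^7=\alpha$; applying the fixed rotation from the proof of Lemma~\ref{le:hypotococalibrated} — i.e.\ passing to $(f_2,-f_1,f_4,-f_3,f_6,-f_5,f_7)$ — turns this into an adapted $\SU(3)$-coframe, and since $X$ is a smooth unit section these assemble into a smooth $\SU(3)$-structure $(\alpha,\omega,\psi)$ on $M$ which induces $g_{\varphi}$ and, by that same lemma, satisfies $\varphi=\omega\wedge\alpha-\hat{\rho}$ and $\star_{\varphi}\varphi=\tfrac{\omega^2}{2}+\alpha\wedge\rho$. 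From the coframe description one also checks $\omega=X\hook\varphi$. These identities yield $\hat{\rho}=\omega\wedge\alpha-\varphi$, hence $\alpha\wedge\hat{\rho}=-\alpha\wedge\varphi=-X^b\wedge\varphi$, and $\alpha\wedge\rho=\star_{\varphi}\varphi-\tfrac{1}{2}\omega^2$.

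It remains to verify the hypo equations. First $d\omega=d(X\hook\varphi)=0$ by hypothesis. Second $d(\alpha\wedge\hat{\rho})=-d(X^b\wedge\varphi)=0$, again by hypothesis. Third $d(\alpha\wedge\rho)=d\star_{\varphi}\varphi-\tfrac{1}{2}d(\omega^2)=0$, since $\varphi$ is cocalibrated and $d(\omega^2)=2\,d\omega\wedge\omega=0$. Hence $d\omega=0$ and $d(\alpha\wedge\psi)=d(\alpha\wedge\rho)+i\,d(\alpha\wedge\hat{\rho})=0$, so $(\alpha,\omega,\psi)$ is hypo and induces $\varphi$. I expect the only delicate step to be the construction of $(\alpha,\omega,\psi)$ from $(\varphi,X)$ — checking that the pointwise frame choices glue to a genuine $\SU(3)$-structure with the correct model tensors under the sign conventions in force; everything else is two lines of exterior algebra.
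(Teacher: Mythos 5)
Your proof is correct and follows essentially the same route as the paper: in both directions you use the identities $X\hook\varphi=\omega$, $X^b\wedge\varphi=-\alpha\wedge\hat{\rho}$ and $\star_{\varphi}\varphi=\tfrac{\omega^2}{2}+\alpha\wedge\rho$, with the transitivity of $\G_2$ on the unit sphere (with stabilizer $\SU(3)$) supplying the $\SU(3)$-structure and cocalibratedness supplying the remaining hypo equation $d(\alpha\wedge\rho)=0$. The only cosmetic difference is that the paper defines $\omega,\rho,\hat{\rho}$ directly as the globally smooth components of $\varphi$ and $\star_{\varphi}\varphi$ relative to $X$ and only uses pointwise adapted bases to verify the model tensors, which sidesteps the frame-gluing concern you flag at the end.
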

\begin{proof}
If $\varphi$ is induced by a hypo $\SU(3)$-structure $(\alpha,\omega,\psi)$, then take $X$ to be the Lee vector field. Then $\alpha=X^b$ and $\varphi=\omega\wedge \alpha-\hat{\rho}$. So $X\hook \varphi=\omega$ is closed as well as $X^b\wedge \varphi=-\alpha\wedge \hat{\rho}$.

Conversely, assume that a unit vector field $X$ with $d(X\hook \varphi)=0$ and $d(X^b\wedge \varphi)=0$ exists. Write $\varphi=\omega\wedge X^b-\tau$ 
and $\star_{\varphi}\varphi=\Omega+X^b\wedge \rho$ for $\omega\in \Omega^2 M$, $\tau,\, \rho\in \Omega^3 M$ and $\Omega\in \Omega^4 M$ such that all these forms
annihilate $X$. As $\G_2$ acts transitively on $S^7$, we find, for any given $p\in M$, an adapted basis $(e_1,\ldots,e_7)$ for $\varphi_p$ with
$(X^b)_p=e^7$. Then one checks that $(X^b,\omega,\rho+i\tau)$ is an $\SU(3)$-structure on $\mfg$ with adapted basis $(e_2,-e_1,e_4,-e_3,e_6,-e_5,e_7)$ in $p\in M$
and $\Omega=\tfrac{\omega^2}{2}$. Hence, $d(X\hook \varphi)=0$ and $d(X^b\wedge \varphi)=0$ give us $d\omega=0$ and $d(X^b\wedge \tau)=0$. Moreover, $\varphi$ is
cocalibrated and so $d\star_{\varphi}\varphi=d\left(\tfrac{\omega^2}{2}+X^b\wedge \rho\right)=d(X^b\wedge \rho)$. Thus, $(X^b,\omega,\rho+i\tau)$ is hypo.
\end{proof}
Our previous results imply now the following statements.
\begin{thm}\label{th:holonomyreduction}
Let $\mfg$ be a seven-dimensional Lie algebra admitting a cocalibrated $\G_2$-structure $\varphi$.
If $\dim([\mfg,\mfg])= 1$ or $\mfg$ is almost Abelian, then $\varphi$ is induced by a hypo $\SU(3)$-structure on $\mfg$ and
the Hitchin flow with initial value $\varphi$ yields a Riemannian manifold $(G\times I,g)$ with holonomy $Hol(g)$
contained in $\SU(4)$ but $Hol(g)\neq \mathrm{Sp}(2)$. In particular, the Hitchin flow on the seven-dimensional Heisenberg algebra $\mfh_7$ always yields a Riemannian manifold with holonomy equal to $\SU(4)$.
\end{thm}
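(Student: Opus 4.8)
The plan is to organize the proof around Lemma~\ref{le:wheninduced}, which reduces the statement ``$\varphi$ is induced by a hypo $\SU(3)$-structure'' to the existence of a $g_{\varphi}$-unit vector $X\in\mfg$ with $d(X\hook\varphi)=0$ and $d(X^b\wedge\varphi)=0$, and then to exhibit such an $X$ by inspection in each of the two cases, closing the argument with cocalibratedness and a Lefschetz-type injectivity. In the almost Abelian case I would let $\mfu$ be the codimension-one abelian ideal and take $X$ to be the $g_{\varphi}$-unit normal of $\mfu$, so that $\mfg=\mfu\oplus\bR X$ orthogonally with $[X,\mfu]\subseteq\mfu$; then $dX^b=0$ and $d(\Lambda^k\mfu^*)\subseteq X^b\wedge\Lambda^k\mfu^*$. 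Writing $\varphi=X^b\wedge\omega+\rho$ with $\omega=X\hook\varphi\in\Lambda^2\mfu^*$ and $\rho\in\Lambda^3\mfu^*$, the identity $d(X^b\wedge\varphi)=d(X^b\wedge\rho)=0$ holds automatically, while $d\omega=X^b\wedge\theta$ for a two-form $\theta$ on $\mfu$ built linearly from $\ad(X)|_{\mfu}$. Since $\star_{\varphi}\varphi=\tfrac{\omega^2}{2}+X^b\wedge\kappa$, cocalibratedness gives $0=d\star_{\varphi}\varphi=X^b\wedge(\omega\wedge\theta)$, hence $\omega\wedge\theta=0$; as $(\mfu,\omega)$ is a six-dimensional symplectic vector space, $\omega\wedge(\cdot)\colon\Lambda^2\mfu^*\to\Lambda^4\mfu^*$ is an isomorphism, so $\theta=0$ and $d\omega=0$.

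For the case $\dim[\mfg,\mfg]=1$ I would write $[\mfg,\mfg]=\bR Z$, so that $[\,\cdot\,,\,\cdot\,]=\sigma(\cdot,\cdot)\,Z$ for some nonzero $\sigma\in\Lambda^2\mfg^*$, and record that $d\eta=-\sigma\wedge(Z\hook\eta)$ for all $\eta$, with $d\sigma=0$. Take $X=Z/|Z|$. Then $Z\hook(X\hook\varphi)=0$ gives $d(X\hook\varphi)=0$ for free. Decomposing $\varphi=\omega\wedge X^b-\hat\rho$ and $\star_{\varphi}\varphi=\tfrac{\omega^2}{2}+X^b\wedge\rho$ as in Lemma~\ref{le:hypotococalibrated} (with $\omega,\rho,\hat\rho$ all annihilating $X$), one computes $d\rho=d\hat\rho=0$ and $d\star_{\varphi}\varphi=-|Z|\,\sigma\wedge\rho$, so cocalibratedness forces $\sigma\wedge\rho=0$. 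Writing $\sigma=\sigma'+X^b\wedge\xi$ with $\sigma'\in\Lambda^2(X^\perp)^*$ and $\xi=X\hook\sigma$, the summand $\sigma'\wedge\rho$ lies in $\Lambda^5(X^\perp)^*=0$, so $\xi\wedge\rho=0$; since $\rho$ is a stable three-form on the six-space $X^\perp$, the map $\zeta\mapsto\zeta\wedge\rho$ is injective on one-forms, whence $X\hook\sigma=0$. Therefore $\sigma\in\Lambda^2(X^\perp)^*$ (so $Z$ is in particular central), and $d(X^b\wedge\varphi)=-dX^b\wedge\hat\rho+X^b\wedge d\hat\rho=|Z|\,\sigma\wedge\hat\rho$ vanishes because $\sigma\wedge\hat\rho\in\Lambda^5(X^\perp)^*=0$. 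So by Lemma~\ref{le:wheninduced}, $\varphi$ is induced by a hypo $\SU(3)$-structure $(\alpha,\omega,\psi)$ with $\alpha=X^b$.

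Since left-invariant structures on $G$ are real-analytic, Corollary~\ref{co:holonomyinSU4} then gives $Hol(g)\subseteq\SU(4)$ for the Riemannian manifold $(G\times I,g)$ produced by the Hitchin flow with initial value $\varphi$. To rule out $\mathrm{Sp}(2)$ I would identify the torsion class of $(\alpha,\omega,\psi)$ from Proposition~\ref{pro:inttorshypo}. In the almost Abelian case $d\alpha=dX^b=0$, so $(\alpha,\omega,\psi)$ is of class $V_1(\lambda_2)\oplus V_{12}$ and Theorem~\ref{th:hyponotequalSp1} applies directly, giving $Hol(g)\neq\mathrm{Sp}(2)$. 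In the case $\dim[\mfg,\mfg]=1$ one has $d\alpha=-|Z|\sigma$ with $X\hook d\alpha=0$, whence $\beta=0$, and then $d\rho=d\hat\rho=0$ forces $\gamma=0$ and $\lambda_2=0$ (the relevant summands have incompatible $(p,q)$-type), so $(\alpha,\omega,\psi)$ is of class $V_1(\lambda_1)\oplus V_8\subseteq 2V_1\oplus V_8$; here I would rerun the strategy of Theorem~\ref{th:hyponotequalSp1}, namely: if $Hol(g)=\mathrm{Sp}(2)$, then Proposition~\ref{pro:holonomySp2inducedflow} provides an inducing flow of hypo $\mathrm{Sp}(1)$-structures with $\alpha_1=\alpha$, and an adaptation of Lemma~\ref{le:specialhypoSp1structures} to this torsion class — using that $d\psi=0$ here — should yield $d\omega_1=d\omega_2=d\omega_3=0$ and $\mfg=V_4\rtimes V_3$ with $V_4=\bigcap_i\ker(\alpha_i)$ an abelian ideal and $V_3=\ker(\omega_1)$ acting skew-symmetrically, so that $\omega_1,\omega_2,\omega_3$ are $\nabla^{g}$-parallel on $G\times I$, contradicting Lemma~\ref{le:parallelforms}. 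Finally, for $\mfg=\mfh_7$ the form $\sigma$ has rank six, so $(d\alpha)^3\neq 0$, and Theorem~\ref{th:irreduciblehol} gives $Hol(g)\in\{\mathrm{Sp}(2),\SU(4)\}$; combined with $Hol(g)\neq\mathrm{Sp}(2)$ and $Hol(g)\subseteq\SU(4)$ this forces $Hol(g)=\SU(4)$.

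The only step that does not follow essentially verbatim from the cited results is the exclusion of $\mathrm{Sp}(2)$ in the case $\dim[\mfg,\mfg]=1$, because there the inducing hypo $\SU(3)$-structure lies in the class $V_1(\lambda_1)\oplus V_8$ rather than in one of the classes for which Theorem~\ref{th:hyponotequalSp1} and Lemma~\ref{le:specialhypoSp1structures} are stated; one must re-prove that lemma with $d\alpha=\lambda_1\omega+\tilde\omega$ in place of $d\alpha=\lambda_1\omega$ in the hypothesis. I expect the extra bookkeeping caused by the primitive $(1,1)$-summand $\tilde\omega$ — checking that it does not obstruct the conclusions $d\omega_i=0$ and $\mfg=V_4\rtimes V_3$ — to be the main technical difficulty; the remaining ingredients (the two choices of $X$, the Lefschetz and stability arguments, and the appeals to Corollary~\ref{co:holonomyinSU4} and Theorems~\ref{th:hyponotequalSp1} and~\ref{th:irreduciblehol}) are routine once the correct $X$ has been identified.
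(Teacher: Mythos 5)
Your choice of $X$ in both cases, the reduction via Lemma~\ref{le:wheninduced}, the Lefschetz argument in the almost Abelian case, and the appeals to Corollary~\ref{co:holonomyinSU4}, Theorem~\ref{th:hyponotequalSp1} (almost Abelian case) and Theorem~\ref{th:irreduciblehol} (for $\mfh_7$) all match the paper. However, in the case $\dim[\mfg,\mfg]=1$ your verification of $d(X^b\wedge\varphi)=0$ rests on the claim that $\Lambda^5(X^\perp)^*=0$, which is false: $X^\perp$ is six-dimensional, so $\Lambda^5(X^\perp)^*$ is six-dimensional, and indeed $\zeta\wedge\rho\neq 0$ for a generic two-form $\zeta$ on $X^\perp$ (e.g.\ $e^{13}\wedge\rho^0\neq 0$ for the model forms). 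The same false claim is used to extract $\xi\wedge\rho=0$ from $\sigma\wedge\rho=0$; there the conclusion survives because $\Lambda^5\mfg^*=\Lambda^5(X^\perp)^*\oplus X^b\wedge\Lambda^4(X^\perp)^*$ is a direct sum, but for $d(X^b\wedge\varphi)=0$ you genuinely need the type argument the paper uses: $\sigma\wedge\rho=0$ together with $X\hook\sigma=0$ forces $\sigma$ to have no $[[\Lambda^{2,0}]]$-component, i.e.\ to be of type $(1,1)$ with respect to $J_{\psi}$, and type-$(1,1)$ two-forms also satisfy $\sigma\wedge\hat\rho=0$. This is a fixable but real error in the justification.

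The substantive gap is the exclusion of $\mathrm{Sp}(2)$ when $\dim[\mfg,\mfg]=1$. You correctly identify that the inducing hypo structure lies in class $V_1(\lambda_1)\oplus V_8$, outside the hypotheses of Theorem~\ref{th:hyponotequalSp1}, but you then only assert that an adaptation of Lemma~\ref{le:specialhypoSp1structures} ``should yield'' the needed conclusions; that adaptation is not carried out, and it is not routine, since the proof of that lemma uses the hypotheses (the classification of Theorem~\ref{th:ClassV1lambda2V12}, respectively the relation $d\psi=i\lambda_2\,\alpha\wedge\psi$) in an essential way, and the primitive summand $\tilde\omega$ changes the bookkeeping throughout. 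The paper avoids the lemma entirely: because every form annihilating $X\in[\mfg,\mfg]$ is automatically closed, the hypo equations for the $\mathrm{Sp}(1)$-structure provided by Proposition~\ref{pro:holonomySp2inducedflow} immediately give $d\alpha_2=d\alpha_3=d\omega_i=0$ and $dX^b=\lambda\,\alpha_2\wedge\alpha_3$; one then writes down an explicit solution of the flow \eqref{eq:hypoSp1flow} in which the metric on the four-dimensional summand is constant in $t$, and the Koszul formula shows $\omega_1,\omega_2,\omega_3$ are parallel, contradicting Lemma~\ref{le:parallelforms}. You need either to substitute this direct argument or to actually prove the proposed generalization of Lemma~\ref{le:specialhypoSp1structures}; as it stands, the $\mathrm{Sp}(2)$-exclusion for $\dim[\mfg,\mfg]=1$ (and hence the claim $Hol(g)=\SU(4)$ for $\mfh_7$) is not established.
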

\begin{proof}
Combining Lemma \ref{le:hypotococalibrated} and Lemma \ref{le:wheninduced}, 
the first statement in both cases follows if we can find a unit length $X\in\mfg$ such that $d(X\hook \varphi)=0$ and $d(X^b\wedge \varphi)=0$. For any unit length
$X\in \mfg$, we denote by $V$ the orthogonal complement of $X$ in $\mfg$. 
Then we can write $\varphi=\omega\wedge X^b-\hat{\rho}$ and $\star_{\varphi}\varphi=\tfrac{
\omega^2}{2}+X^b\wedge \rho$ for a special almost Hermitian structure $(
\omega,\rho+i\hat{\rho})$ on $V$ and  if the unit length $X$ fulfills the above equations, then
a hypo $\SU(3)$-structure on $\mfg$ inducing $\varphi$ is given by $(X^b,\omega,\rho+i\hat{\rho})$, cf. the proof 
of Lemma \ref{le:wheninduced}.

Now consider first the case $\dim([\mfg,\mfg])=1$ and take $X\in [\mfg,\mfg]$ 
of unit length. If $\nu\in \Lambda^k V^*$, i.e. $\nu$ annihilates $X$, then 
$d\nu=0$ as $X$ is a basis of the commutator. Hence, $d\omega=d(X\hook \varphi
)=0$ as well as $d\rho=d\hat{\rho}=0$. So $0=d\star_{\varphi}\varphi=dX^b
\wedge \rho$, which gives that $dX^b$ annihilates $X$, i.e. $dX^b\in \Lambda^2
 V^*$, and that $dX^b$ is of type $(1,1)$ with respect to the induced almost 
complex structure on $V$. But then $-d(X^b\wedge \varphi)=dX^b\wedge \hat{
\rho}=0$ as well. For the second statement in this case, we argue by 
contradiction and assume that $Hol(g)=\mathrm{Sp}(2)$. As the hypo flow with 
initial value $(X^b,\omega,\rho+i\hat{\rho})$ also yields the metric $g$ on $G
\times I$, Proposition \ref{pro:holonomySp2inducedflow} shows the existence 
of a hypo $\mathrm{Sp}(1)$-structure $(\alpha_1,\alpha_2,\alpha_3,\omega_1,
\omega_2,\omega_3)$ inducing $(X^b,\omega,\rho+i\hat{\rho})$ Lemma \ref{le:Sp1Su3}. In particular, we have $X^b=\alpha_1$. As $\alpha_2,\alpha_3,\omega_2,
\omega_3$ all annihilate $X$, they are all closed and so the hypo equations 
give us $0=dX^b\wedge \alpha_3$ and $0=dX^b\wedge \alpha_2$, i.e. that $dX^b=\lambda \alpha_2\wedge \alpha_3$ for some $\lambda \in \bR$. Hence, a 
solution of the flow equations \eqref{eq:hypoSp1flow} with initial value $(X^b,\alpha_2,\alpha_3,\omega_1,\omega_2,\omega_3)$ is given by $\alpha_1(t)=
(1+\tfrac{3\lambda}{2} t)^{-\tfrac{1}{3}}\,X^b$, $\alpha_j(t)=(1+\tfrac{3\lambda}{2} t)^{\tfrac{1}{3}}\,\alpha_j$ 
for $j=2,3$ and $\omega_i(t)=\omega_i$ for $i=1,2,3$. This shows that $\mfg=V_4\oplus V_3$ with $V_4:=\ker \omega_1\cong \bR^4$ and $V_3:=\bigcap_{i=1}^3 
\ker(\alpha_i)$ as Lie algebras, that this decomposition is orthogonal for 
all times $t$ and that the metric on $V_4$ does not depend on $t$. Hence, the 
Koszul formula gives us $\nabla_{V_4}=0$ and $\nabla_{V_3\oplus \spa{\partial_t}} (\mfg\oplus \spa{\partial_t})
\subseteq V_3\oplus \spa{\partial_t}$ for the Levi-Civita connection $\nabla$ 
of the Riemannian metric $g$ on $G\times I$. Thus, $\nabla \omega_i=0$ for 
all $i=1,2,3$, which contradicts Lemma \ref{le:parallelforms}. So $Hol(g)\neq 
\mathrm{Sp}(2)$ in this case. If we have $\mfg=\mfh_7$, then $(X^b,\omega,\rho
+i\hat{\rho})$ is a hypo $\SU(3)$-structure of type $2V_1\oplus V_8$ with $(dX^b)^3\neq 0$ and so $Hol(g)=\SU(4)$ by Theorem \ref{th:irreduciblehol}.

In the second case, we take $X$ of unit length being orthogonal to an Abelian 
ideal $V$ of codimension one. Then $dX^b=0$ and $d\nu= X^b\wedge f.\nu$ for all $\nu\in 
\Lambda^k V^*$, where $f:=\ad(X)|_{V}$. Thus, $d(X^b\wedge \Lambda^k 
\mfg^*)=\{0\}$ and so, automatically, $d(X^b\wedge \varphi)=0$. Moreover, $0=d
\star_{\varphi}\varphi=d\omega\wedge \omega=X^b\wedge f.\omega\wedge \omega$, 
i.e. $f.\omega\wedge \omega=0$. But, as $V$ is six-dimensional, the Lefschetz map from $\Lambda^2 \mfg^*$ to $\Lambda^4 V^*$, 
cf., e.g., \cite{Huy}. Hence, $f.\omega=0$ and so $0=X^b\wedge f.\omega=d
\omega=d(X\hook \varphi)$. As the hypo $\SU(3)$-structure inducing $\varphi$ 
constructed by Lemma \ref{le:wheninduced} is in this case of class $V_1(\lambda_2)\oplus V_{12}$, Theorem \ref{th:hyponotequalSp1} implies the second 
statement.

\end{proof}
\begin{remark}
The seven-dimensional Lie algebras with one-dimensional commutator ideal are given by $\mathfrak{r}_2\oplus \bR^5$ and $\mathfrak{h}_{2k+1}\oplus \bR^{6-2k}$ for $k=1,2,3$, where $\mathfrak{r}_2$ is the unique non-Abelian two-dimensional Lie algebra and $\mathfrak{h}_{2k+1}$ is the $(2k+1)$-dimensional Heisenberg algebra. One easily sees that $\mathfrak{r}_2\oplus \bR^5$ does not admit a cocalibrated $\G_2$-structure while the others do and that the holonomy of the Riemannian metric obtained by the Hitchin flow on $\mathfrak{h}_3\oplus \bR^4$ and $\mathfrak{h}_5\oplus \bR^2$ is reducible.
\end{remark}
Let us give an explicit example of a Riemannian metric with holonomy equal to $\SU(4)$ obtained by the Hitchin flow on $\mathfrak{h}_7$. Note that for generic initial values on $\mathfrak{h}_7$ one cannot solve the Hitchin flow explicitly.
\begin{example}\label{ex:holSU4h7}
Take a basis $e_1,\ldots,e_7$ of $\mfh_3$ such that the only non-zero Lie brackets (up to anti-symmetry) are given by $[e_{2i-1},e_{2i}]=e_7$ for $i=1,2,3$ and let $\varphi=\omega\wedge e^7+\rho$ with $\omega:=e^{12}+e^{34}+e^{56}$ and $\rho:=e^{135}-e^{146}-e^{236}-e^{245}$. The solution $(\varphi(t))_{t\in I}$ of the Hitchin flow with initial value $\varphi$ is then given by $\varphi(t)=\left(\tfrac{5}{2}t+1\right)^{-\tfrac{1}{5}}\omega\wedge e^7+\left(\tfrac{5}{2}t+1\right)^{\tfrac{3}{5}}\rho$ for $t\in \left(-\tfrac{2}{5},\infty\right)$ and it induces the Riemannian metric $g$ with holonomy equal to $\SU(4)$ on $\mathrm{H}_7\times \left(-\tfrac{2}{5},\infty\right)$ given by
\begin{equation*}
g=\left(\tfrac{5}{2}t+1\right)^{\tfrac{2}{5}} \sum_{i=1}^6 e^i\otimes e^i+\left(\tfrac{5}{2}t+1\right)^{-\tfrac{6}{5}} e^7\otimes e^7+dt^2.
\end{equation*}
\end{example}
\subsection{The diagonal Hitchin flow on almost Abelian Lie algebras}\label{subsec:diagonalHitchin}
In this final subsection, we like to give many explicit examples of Riemannian metrics obtained by the left-invariant Hitchin flow on almost Abelian Lie groups $G$ with holonomy equal to $\SU(4)$. To solve the Hitchin, or alternatively the hypo flow, explicitly and to be able to show that the holonomy of the outcoming Riemannian manifold is actually equal to $\SU(4)$, recall that Lemma \ref{le:irreduciblehol} gives us, in combination with Theorem \ref{th:holonomyreduction}, a criterion for the holonomy to be equal to $\SU(4)$ in the case that there exists a basis $(e_1,\ldots,e_7)$ of the Lie algebra $\mfg$ which stays orthogonal during the flow. So it is of interest to determine first those almost Abelian Lie algebras which possess such a basis. To simplify this investigation and to make a connection to the given initial cocalibrated $\G_2$-structure $\varphi_0$ and the structure of $\mfg$, we assume that the basis is an adapted basis for $\varphi_0$ such that $(e_1,\ldots,e_6)$ is a basis of a codimension one Abelian ideal $\mfu$. In this situation, we call $(e_1,\ldots,e_7)$ \emph{adapted for $(\varphi_0,\mfu)$}. We obtain:
\begin{lemma}\label{le:diagonalflow}
Let $\mfg$ be a seven-dimensional almost Abelian Lie algebra with codimension one Abelian ideal $\mfu$, $\varphi_0\in \Lambda^3 \mfg^*$ be a cocalibrated $\G_2$-structure on $\mfg$ and $e_7\in\mfg\backslash \mfu$ be orthogonal to $\mfu$ and of norm one. Moreover, let $(\varphi(t))_{t\in I}$ be the solution of the Hitchin flow with initial value $\varphi(0)=\varphi_0$, $J_t$ be the complex structure on $\mfu$ induced by $\varphi(t)|_{\mfu}$ and the orientation induced by $\omega_0:=\left(e_7\hook \varphi_0\right)|_{\mfu}$ and set $f:=\ad(e_7)|_{\mfu}$. Then there exists an adapted basis for $(\varphi_0,\mfu)$ which last entry $e_7$ which stays orthogonal through the Hitchin flow with initial value $\varphi_0$ if and only if
\begin{itemize}
\item[(i)]
$f\in \mathfrak{u}(\mfu,\omega_0,J_0)$ and then $J_t=J_0$ for all $t\in I$ or
\item[(ii)]
there exists an orthogonal splitting $\mfu=V_4\oplus V_2$ into $J_0$- and $f$-invariant subspaces $V_4$ and $V_2$ with $\dim(V_i)=i$ for $i=2,4$ such that $[f,J_0]|_{V_4}=0$ and that form some $v\in V_2$  we have $f(v)=a v + J_0 v$ and $f(J_0 v)=c v+ a J_0 v$ for certain $a,b,c\in\bR$ and then $J_t|_{V_4}=J_0|_{V_4}$ or
\item[(iii)]
$f(e_1)=\lambda_1 e_2+a e_3$, $f(e_2)=\mu_1 e_1+a e_4$, $f(e_3)=-ae_1+\lambda_2 e_4$, $f(e_4)=-ae_2+\mu_2 e_3$, $f(e_5)=\lambda_3 e_6$ and $f(e_6)=\mu_3 e_5$ for certain $a,\lambda_i,\mu_j\in \bR$, $i,j=1,2,3$, with $a\bigl((\lambda_1-\lambda_2)^2+(\mu_1-\mu_2)^2\bigr)=0$ for an adapted basis $(e_1,\ldots,e_7)$ for $(\varphi_0,\mfu)$. This adapted basis stays orthogonal during the Hitchin flow.
\end{itemize}
\end{lemma}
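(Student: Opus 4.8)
The strategy is to translate the geometric condition "there is an adapted basis for $(\varphi_0,\mfu)$ whose last vector is $e_7$ and which stays orthogonal along the Hitchin flow" into a condition on the endomorphism $f=\ad(e_7)|_\mfu$, using that by Lemma~\ref{le:hypotococalibrated} and Lemma~\ref{le:wheninduced} the cocalibrated $\G_2$-structure $\varphi_0$ on an almost Abelian $\mfg$ is induced by the hypo $\SU(3)$-structure $(e^7,\omega_0,\rho_0+i\hat\rho_0)$ of class $V_1(\lambda_2)\oplus V_{12}$ (here $e^7$ is closed and $\mfu=\cD_\alpha$ is an ideal). By Corollary~\ref{co:hypoflowV1(lambda2)V12}(a), the hypo flow with this initial value has $\alpha(t)=\sqrt{\phi(\tau(t))/2\phi(\omega_0)}\,\alpha_0$, $\omega(t)\equiv\omega_0$, and $\psi(t)=(\tau(t)+i\hat\tau(t))/\sqrt{\dots}$, with $\tau'(t)=c(t)\,X_0\hook d\hat\tau(t)$ for a positive scalar $c(t)$; and by Lemma~\ref{le:flowofhat}(a) the induced complex structures evolve by $J_t'v=-c(t)\bigl(J_t[e_7,J_t v]+[e_7,v]\bigr)=-c(t)\bigl(J_t f J_t+f\bigr)v$. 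First I would record that the induced metric on $\mfg\oplus\spa{\partial_t}$ is $g=\omega_0(J_t\cdot,\cdot)+\alpha(t)\otimes\alpha(t)+dt^2$, so a basis $(e_1,\dots,e_6)$ of $\mfu$ (completed by $e_7$) stays $g$-orthogonal exactly when it stays $\omega_0(J_t\cdot,\cdot)$-orthogonal for all $t$; since $\omega_0$ is fixed, this is an adaptedness condition for the one-parameter family $J_t$.

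Next I would analyze when such a basis exists. The evolution operator is $P(J):=-(JfJ+f)$, a vector field on the (finite-dimensional) space of $\omega_0$-compatible complex structures. The fixed points of the flow are exactly the $J$ with $JfJ+f=0$, i.e. $fJ=Jf$, i.e. $f\in\mathfrak{u}(\mfu,\omega_0,J)$ — this is case (i), where $J_t\equiv J_0$ and any $J_0$-adapted symplectic basis works (it stays orthogonal trivially). For the non-fixed case, the key observation is that if $\mfu=V_4\oplus V_2$ splits orthogonally into $J_0$- and $f$-invariant pieces with $[f,J_0]|_{V_4}=0$, then on $V_4$ one has $J_t|_{V_4}\equiv J_0|_{V_4}$, and on $V_2$ the flow of $J_t$ is a flow of compatible complex structures on a $2$-plane, where the orthonormal frame $\{e_j/\|e_j\|_t\}$ is forced to rotate but stays orthogonal automatically (in two real dimensions every compatible complex structure is $\pm$ the standard one once the metric is fixed, so orthogonality is preserved). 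I would verify that $f|_{V_2}$ in the stated form $f(v)=av+bJ_0v$, $f(J_0v)=cv+aJ_0v$ is the general $2\times2$ trace-adjustment compatible with $J_t$ staying compatible on $V_2$ — that's case (ii). Case (iii) is the genuinely delicate one: here $f$ mixes a $4$-plane and a $2$-plane through the skew entries with parameter $a$, and one must show that the particular block form with the constraint $a\bigl((\lambda_1-\lambda_2)^2+(\mu_1-\mu_2)^2\bigr)=0$ is exactly what makes the evolving frame stay orthogonal; I would do this by writing $f$ in the adapted basis, computing $J_t$ perturbatively (or by noting the flow preserves certain $f$- and $J_0$-invariant $2$-planes), and checking the off-diagonal entries of $\omega_0(J_t\cdot,\cdot)$ vanish for all $t$ iff that constraint holds.

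The heart of the argument, and the main obstacle, is the \textbf{classification step}: showing these three families exhaust all possibilities. The plan is to argue structurally. Decompose $f=f_s+f_a$ into its $g_0$-self-adjoint and skew-adjoint parts (relative to $g_0=\omega_0(J_0\cdot,\cdot)$); the skew part $f_a\in\mathfrak{u}(\mfu,\omega_0,J_0)\oplus(\text{anti-}J_0\text{-linear skew part})$. "Stays orthogonal" forces, at $t=0$, that $\dot J_0=P(J_0)$ be tangent to the orbit of a fixed orthonormal frame under the rotation group, which translates into $(J_0fJ_0+f)$ being block-diagonalizable against a $J_0$-adapted orthogonal splitting of $\mfu$ into $2$-planes — and then the persistence of orthogonality for all $t$ (not just infinitesimally) forces $f$ to preserve that splitting, which is where the case division by the dimension pattern $6=6$, $6=4+2$, $6=4+2$ (with the two $4+2$ cases distinguished by whether the $2$-plane couples to the $4$-plane) comes from. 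I expect the bookkeeping — enumerating the normal forms of $f$ under the residual $\mathrm{U}(3)$-change of adapted basis subject to the orthogonality-persistence constraint — to be the lengthy and error-prone part, and I would organize it by first proving that $\mfu$ must decompose into $f$- and $J_0$-invariant $2$-planes except possibly for one coupled $4+2$ configuration, then reading off (i)–(iii) from the resulting normal forms.
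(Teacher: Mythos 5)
Your setup is the same as the paper's: reduce to the hypo flow of class $V_1(\lambda_2)\oplus V_{12}$ via Corollary \ref{co:hypoflowV1(lambda2)V12}(a) and Lemma \ref{le:flowofhat}, so that $\omega(t)\equiv\omega_0$ and $\dot J_t$ is a nonzero multiple of $J_tfJ_t+f$, and observe that orthogonality of the frame is governed entirely by $J_t$. Case (i) as the fixed-point locus is also correct. But the substance of the lemma --- the exhaustive classification and in particular the origin of the constraint $a\bigl((\lambda_1-\lambda_2)^2+(\mu_1-\mu_2)^2\bigr)=0$ --- is deferred to ``bookkeeping'' that you do not carry out, and the structural principles you propose to organize it around are both false. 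First, persistence of orthogonality does \emph{not} force $f$ to preserve a splitting of $\mfu$ into $J_0$-invariant $2$-planes: case (iii) with $a\neq 0$ is exactly a configuration where $f$ mixes $\spa{e_1,e_2}$ and $\spa{e_3,e_4}$ yet the frame stays orthogonal (because the corresponding scaling functions coincide). Second, your claim that on a $2$-plane orthogonality is ``preserved automatically'' is circular: the metric $g_t|_{V_2}=\omega_0(J_t\cdot,\cdot)|_{V_2}$ is not fixed, the space of $\omega_0$-compatible complex structures on a $2$-plane is two-dimensional, and $g_t(e_5,e_6)=0$ is a genuine codimension-one condition --- it is precisely what forces $f|_{V_2}$ to have the special form $f(v)=av+bJ_0v$, $f(J_0v)=cv+aJ_0v$ in case (ii).

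What is missing is the paper's key mechanism. Orthogonality of the adapted frame for all $t$ is equivalent to $J_t$ having, in the basis $(e_1,e_3,e_5,e_2,e_4,e_6)$, the anti-diagonal form with entries $h_1(t),h_2(t),h_3(t)$ and $-1/h_i(t)$. Writing $f=\left(\begin{smallmatrix} A & B\\ C & -A^t\end{smallmatrix}\right)\in\mathfrak{sp}(\mfu,\omega_0)$ (cocalibration of $\varphi_0$ is exactly $f\in\mathfrak{sp}(\mfu,\omega_0)$, by \cite{Fr}), the requirement that $J_tfJ_t+f$ again respect this form for all $t$ yields the algebraic conditions $a_{ij}(1-h_i/h_j)=0$ and $b_{ij}(1-h_ih_j)=0$ for $i\neq j$, together with $A^t=-A$ and $c_{ij}=-b_{ij}$ from $t=0$. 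The case division then comes from how many $h_i$ are constant and which coincide, and the ODEs $\dot h_i=g(t)(\mu_i+\lambda_ih_i^2)$ show that $h_i\equiv h_j$ forces $\lambda_i=\lambda_j$, $\mu_i=\mu_j$ --- which is where $a\bigl((\lambda_1-\lambda_2)^2+(\mu_1-\mu_2)^2\bigr)=0$ comes from. None of this is recoverable from your infinitesimal-plus-persistence sketch, so the proof as proposed does not close.
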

\begin{proof}
Note first of all that $\varphi_0$ is cocalibrated if and only if $f\in \mathfrak{sp}(\mfu,\omega_0)$, cf. \cite{Fr}.
So let now $(e_1,\ldots,e_7)$ be an adapted basis for $(\varphi_0,\mfu)$. By the proof of Theorem \ref{th:holonomyreduction}, $\varphi_0$ is induced by the hypo $\SU(3)$-structure $(\alpha_0,\omega_0,\psi_0)$ with $\alpha_0:=e^7$ and $\psi_0:=(e^1-ie^2)\wedge (e^3-ie^4)\wedge (e^5-ie^6)$. So by Lemma \ref{le:hypotococalibrated}, the solution $(\alpha(t),\omega(t),\psi(t))_{t\in I}$ of the hypo flow with this initial value induces the solution of the Hitchin flow $(\varphi(t))_{t\in I}$ and yields, in particular, the same metric $g=g_t+dt^2$ on $G\times I$ as the Hitchin flow. Now, combining Corollary \ref{co:hypoflowV1(lambda2)V12} and Lemma \ref{le:flowofhat}, we know that $\omega\equiv \omega_0$ and that the induced almost complex structure $J_t$ fulfills $\dot{J}_t=g(t) (J_t f J_t+f)$ for some non-zero function $g:I\rightarrow \bR$. As $\dot{g}_t=\omega_0(\dot{J}_t\cdot,\cdot)$ and as the adapted basis $e_1,\ldots,e_6$ fulfills $J_0 e_{2i-1}=-e_{2i}$ and $J_0 e_{2i}=e_{2i-1}$ for $i=1,2,3$, the condition that $(e_1,\ldots,e_6)$ stays orthogonal through the hypo flow is equivalent to $\dot{J}_t e_{2i-1}\in \spa{e_{2i}}$ and $\dot{J}_t e_{2i}\in \spa{e_{2i-1}}$ for $i=1,2,3$. Using that $J_t^2=-\id_{\mfu}$, this is, in turn, equivalent to
\begin{equation*}
J_t=\left(\begin{smallmatrix} \textnormal{\LARGE 0} & \begin{smallmatrix}  h_1(t) & & \\ & h_2(t) & \\ & & h_3(t) \end{smallmatrix} \\  \begin{smallmatrix} -\tfrac{1}{h_1(t)} & & \\ & -\tfrac{1}{h_2(t)} & \\ & & -\tfrac{1}{h_3(t)}\end{smallmatrix}   & \textnormal{\LARGE 0} \end{smallmatrix}\right)
\end{equation*}
with respect to the basis $(e_1,e_3,e_5,e_2,e_4,e_6)$ for smooth functions $h_1,h_2,h_3:I\rightarrow \bR$ with $h_i(0)=1$ for $i=1,2,3$.

If $h_1$, $h_2$, $h_3$ are all constant, then $J_t=J_0$ for all $t\in I$ and so $J_0fJ_0+f=0$, i.e. $[f,J_0]=0$. Hence, $f\in \mathfrak{u}(\mfu,\omega_0,J_0)$.

If exactly two of the $h_i$, say $h_1$ and $h_2$, are constant then $J_t|_{V_4}=J_0|_{V_4}$ for all $t\in I$ and $(J_0fJ_0+f)|_{V_4}=0$ for $V_4:=\spa{e_1,e_2,e_3,e_4}$. In general, we have $J_0f (e_{2i})=-J_0fJ_0e_{2i-1}=f(e_{2i-1})=-J_t fJ_t e_{2i-1}=h_i(t) J_tf(e_{2i})$ for all $i=1,2,3$ and all $t\in I$ up to terms in $\spa{e_{2i}}$. Inserting some $t\in I$ for which $h_3(t)\neq 1$, we get that $f(e_1),f(e_3)\in V_4$ and $f(e_5)\in V_2:=\spa{e_5,e_6}$. Similarly, we see that $f(e_2),f(e_4)\in V_4$ and $(e_6)\in V_2$. Thus, $f$ preserves the subspaces $V_2$ and $V_4$. But then $(f+J_tfJ_t)(e_5)\in \spa{e_6}$ and $(f+J_tfJ_t)(e_6)\in \spa{e_5}$ are equivalent to $f(e_5)=ae_5+be_6$ and $f(e_6)=c e_5+ae_6$ for certain $a,b,c\in \bR$ and

Finally, assume that at most one of the $h_i$ is constant. Writing $f=\left(\begin{smallmatrix} A & B \\ C & D \end{smallmatrix}\right)$ with $A,B,C,D\in \bR^{3\times 3}$ with respect to the basis $(e_1,e_3,e_5,e_2,e_4,e_6)$, the condition $f\in \mathfrak{sp}(\mfu,\omega_0)$ is equivalent to $D=-A^t$, $B^t=B$ and $C^t=C$. Moreover, 
\begin{equation*}
J_0 fJ_0+f=\begin{pmatrix} A+A^t & B+C \\ B+C & -A-A^t \end{pmatrix}.
\end{equation*}
Hence, $A^t=-A$ and $c_{ij}=-b_{ij}$ for all $i\neq j$. Similarly, for $F:=\diag(h_1,h_2,h_3)$ we obtain
\begin{equation*}
J_t fJ_t+f=\begin{pmatrix} A-FAF^{-1} & B+FCF \\ C+F^{-1}BF^{-1} &  A-F^{-1}AF\end{pmatrix}.
\end{equation*}
So the desired evolution behavior is equivalent to $a_{ij}\left(1-\tfrac{h_i}{h_j}\right)=0$ for all $i, j\in \{1,2,3\}$ and $b_{ij}(1-h_i h_j)=0$ for all $i,j$ with $i\neq j$. By rotating $(e_{2l},e_{2l-1})$ by $\tfrac{\pi}{2}$ and $(e_{2k},e_{2k-1})$ by $-\tfrac{\pi}{2}$ for appropriate $l,k\in \{1,2,3\}$, $k\neq l$, we get again an adapted basis and so we may assume that $h_i h_j\not\equiv 1$ for all $i\neq j$. But then necessarily $b_{ij}=0$ for all $i\neq j$. Write $B=\diag(\mu_1,\mu_2,\mu_3)$ and $C=\diag(\lambda_1,\lambda_2,\lambda_3)$. One computes that then $\dot{J}_t e_{2i}=\dot{h}_i(t) e_{2i-1}=g(t)(\mu_i+\lambda_i h_i^2(t)) e_{2i-1}$, i.e. $\dot{h}_i(t)=g(t)(\mu_i+\lambda_i h_i^2(t))$ for all $i=1,2,3$. 

If $h_i(t)\neq h_j(t)$ for all $i\neq j$, we get $A=0$ and are in the claimed situation. If there are $i\neq j$ with $h_i=h_j$, then $\dot{h}_i= \dot{h}_j$ and so $\lambda_i+\tfrac{\mu_i}{h_i(t)}=\lambda_j+\tfrac{\mu_j}{h_j(t)}=\lambda_j+\tfrac{\mu_j}{h_i(t)}$ for all $t\in I$. As $h_i=h_j$ cannot be constant by assumption, we must have $\lambda_i=\lambda_j$ and $\mu_i=\mu_j$. If only two $h_i$s are equal, we may permute the indices so that $h_1=h_2\neq h_3$ and then must have $a_{13}=-a_{31}=a_{23}=-a_{32}=0$ and are in the claimed situation. Finally, we consider the case $h_1=h_2=h_3$. As $A$ is skew-symmetric, there exists some $G\in \mathrm{SO}(3)$ such that
\begin{equation*}
G^{-1} A G=\begin{pmatrix} 0 & -a & 0 \\ a & 0 & 0 \\ 0 & 0 & 0 \end{pmatrix}:=B
\end{equation*}
for some $a\in \bR$. Now $\diag(G,G)\in \mathrm{SU}(3)$ and so we may assume that with respect to our adapted basis $f=\left(\begin{smallmatrix} B & \mu I_3 \\
\lambda I_3 & B \end{smallmatrix}\right)$ for some $\lambda,\mu\in \bR$. This finishes the proof.
\end{proof}
\begin{thm}\label{th:diagonalflow}
Let $\mfg$ be a seven-dimensional almost Abelian Lie algebra with codimension one Abelian ideal $\mfu$, $\varphi_0$ be a cocalibrated $\G_2$-structure on $\mfg$, $e_7\in \mfg\backslash \mfu$ be orthogonal to $\mfu$ and of norm one and set $f:=\ad(e_7)|_{\mfu}$.

Then there exists a basis of $\mfg$ adapted for $(\varphi_0,\mfu)$ with last vector $e_7$ which stays orthogonal during the Hitchin flow such that the
Riemannian manifold obtained by the Hitchin flow has holonomy equal to $\SU(4)$ if and only if there exists a basis $(e_1,\ldots,e_7)$ adapted for $(\varphi_0,\mfu)$ such that $f$ is given by
\begin{equation*}
\left(\begin{array}{cc|cc|cc}
& \mu_1 & -a & & & \\
\lambda_1 & & & -a & & \\
\hline
a & & & \mu_2 & & \\
& a & \lambda_2 & & & \\
\hline
& & & & & \mu_3 \\
& & & & \lambda_3 &
\end{array}
\right)
\end{equation*}
with respect to the basis $(e_1,\ldots,e_6)$ for certain $(a,\lambda_1,\lambda_2,\lambda_3,\mu_1,\mu_2,\mu_3)\in \bR^7$ with 
\begin{equation*}
a\bigl((\lambda_1-\lambda_2)^2+(\mu_1-\mu_2)^2\bigr)=0,\quad \lambda_i+\mu_i\neq 0
\end{equation*}
for all $i=1,2,3$.

If this is the case, $(M\times I,g)$ is isometric to $(M\times J,\tilde{g})$, where
\begin{equation*}
\begin{split}
\tilde{g}=&\sum_{i=1}^3 \left(\tfrac{1}{f_{\lambda_i,\mu_i}(x)}\,e^{2i-1}\otimes e^{2i-1}+ f_{\lambda_i,\mu_i}(x)\, e^{2i}\otimes e^{2i}\right)\\
&+\prod_{j=1}^3 \tfrac{\lambda_j+\mu_j}{\lambda_j f_{\lambda_j,\mu_j}(x)+\tfrac{\mu_j}{f_{\lambda_j,\mu_j}(x)}}\left(e^7\otimes e^7+ dx^2\right),
\end{split}
\end{equation*}
for any $(\lambda,\mu)\in \bR^2$, the function $f_{\lambda,\mu}$ is the maximal solution of the initial value problem
\begin{equation}\label{eq:IVPsimpler}
f_{\lambda,\mu}'=-(\lambda f_{\lambda,\mu}^2+\mu),\qquad f_{\lambda,\mu}(0)=1
\end{equation}
and $J=\bigcap_{i=1}^3 J_i$, where $J_i$ is the intersection of the maximal interval of existence of $f_{\lambda_i,\mu_i}$ with the interval on which $f_{\lambda_i,\mu_i}>0$, $i=1,2,3$.
\end{thm}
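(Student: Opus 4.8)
The plan is to establish both directions of the equivalence together with the explicit form of the metric, throughout reducing the Hitchin flow to the hypo flow via Lemma~\ref{le:hypotococalibrated}. I would fix a codimension one Abelian ideal $\mfu$, a unit vector $e_7\perp\mfu$, and set $f:=\ad(e_7)|_\mfu$; by the proof of Theorem~\ref{th:holonomyreduction}, for any basis $(e_1,\dots,e_7)$ adapted for $(\varphi_0,\mfu)$ the $\G_2$-structure $\varphi_0$ is induced by the hypo $\SU(3)$-structure $(\alpha_0,\omega_0,\psi_0)=(e^7,\,e^{12}+e^{34}+e^{56},\,(e^1-ie^2)\wedge(e^3-ie^4)\wedge(e^5-ie^6))$, which lies in the class $V_1(\lambda_2)\oplus V_{12}$ since $de^7=0$.

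For the direction from the block form to the conclusions, I would start from Corollary~\ref{co:hypoflowV1(lambda2)V12}(a) combined with Lemma~\ref{le:flowofhat}: the hypo flow keeps $\omega\equiv\omega_0$ and makes the induced complex structure $J_t$ evolve by $\dot J_t=-c(t)^{-1}(J_tfJ_t+f)$ with $c(t)=\sqrt{\phi(\tau(t))/2\phi(\omega_0)}$, $c(0)=1$. Feeding in the block form, one checks that the three coordinate $2$-planes behave nicely: if $a=0$ they are $f$-invariant, and if $a\neq0$ (so $\lambda_1=\lambda_2$, $\mu_1=\mu_2$) the $a$-terms in $J_tfJ_t+f$ cancel once $p_1(t)=p_2(t)$, which holds by uniqueness of the (scalar) flow equation. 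Hence $J_t$ keeps the diagonal shape of Lemma~\ref{le:diagonalflow} with parameters $p_i(t)$, $p_i(0)=1$, obeying $\dot p_i=-c(t)^{-1}(\lambda_ip_i^2+\mu_i)$; the substitution $x(t)=\int_0^t c(s)^{-1}ds$ turns these into \eqref{eq:IVPsimpler}, so $p_i=f_{\lambda_i,\mu_i}$. From $g_t|_\mfu=\omega_0(J_t\cdot,\cdot)=\sum_i\big(f_{\lambda_i,\mu_i}^{-1}e^{2i-1}\otimes e^{2i-1}+f_{\lambda_i,\mu_i}e^{2i}\otimes e^{2i}\big)$, from $\alpha(t)=c(t)e^7$ and $dt=c(t(x))\,dx$, and from $\tau'=X_0\hook d\hat\tau/c$ (which, after writing $\tau(t)=r(t)\re\big(\bigwedge_{i=1}^{3}(e^{2i-1}-ip_ie^{2i})\big)$, yields $c^2=r^2p_1p_2p_3$ and $r^2=\prod_j(\lambda_j+\mu_j)/(-f_{\lambda_j,\mu_j}')$), I would assemble the metric $g=g_t|_\mfu+c^2 e^7\otimes e^7+dt^2$ and recognise it as $\tilde g$ on the stated interval $J$; in particular the adapted basis stays orthogonal. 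For the holonomy, Theorem~\ref{th:holonomyreduction} already gives $Hol(g)\subseteq\SU(4)$ and $Hol(g)\neq\mathrm{Sp}(2)$, and $g$ is of the shape required by Lemma~\ref{le:irreduciblehol} in the flow parameter $t$, with coefficient functions $f_{\lambda_i,\mu_i}(x(t))^{\pm1}$ and $c(t)^2$; translating the ODE $y''=(y')^2/(2y)$ that characterises squares of affine functions into a relation along the flow and using $\lambda_i+\mu_i\neq0$ for all $i$, none of the seven coefficients is such a square, so Lemma~\ref{le:irreduciblehol} forces $Hol(g)=\SU(4)$.

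For the converse I would invoke Lemma~\ref{le:diagonalflow}: if some adapted basis with last vector $e_7$ stays orthogonal through the flow and $Hol(g)=\SU(4)$, then $f$ is, in a suitable adapted basis, of one of the types (i)--(iii). Type (iii) is already the block form with the displayed constraint; in type (i) the skew-Hermitian $f\in\mathfrak{u}(\mfu,\omega_0,J_0)$ is $\SU(3)$-conjugate to a complex-diagonal endomorphism, i.e.\ the block form with $a=0$, $\mu_i=-\lambda_i$; in type (ii) one complex-diagonalises the $V_4$-summand and rotates the $V_2$-summand (which lies in $\mathfrak{sp}(V_2,\omega_0)\cong\mathfrak{sl}(2,\bR)$) by $\mathrm{SO}(2)$ into off-diagonal form. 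So $f$ is in the block form with $a((\lambda_1-\lambda_2)^2+(\mu_1-\mu_2)^2)=0$, and it only remains to rule out $\lambda_i+\mu_i=0$ for some $i$. If $\lambda_i+\mu_i=0$, then by the first part $p_i$ solves $p'=\lambda_i(1-p^2)$, $p(0)=1$, whence $p_i\equiv1$; using that the block form then turns either $\spa{e_{2i-1},e_{2i}}$ or, in the balanced case, $V_4$ into an Abelian ideal on which $e_7$ acts skew-symmetrically with a $t$-independent induced metric, the Koszul formula shows this subspace is $\nabla$-parallel. Then $Hol(g)$ preserves a proper subspace of $\mfg\oplus\spa{\partial_t}$ and cannot equal $\SU(4)$, contradicting the hypothesis; hence $\lambda_i+\mu_i\neq0$ for all $i$, and the isometry with $\tilde g$ was produced above.

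The step I expect to be the main obstacle is the explicit integration of the flow: checking that the constraint $a((\lambda_1-\lambda_2)^2+(\mu_1-\mu_2)^2)=0$ is exactly what keeps $J_t$ diagonal and decouples the $p_i$, and then keeping track of the overall scale $r(t)$ to reach the closed form of the $e^7\otimes e^7$-coefficient; a closely related nuisance is verifying, when all $\lambda_i+\mu_i\neq0$, that none of the seven coefficient functions of $g$ is the square of an affine function, so that Lemma~\ref{le:irreduciblehol} is applicable.
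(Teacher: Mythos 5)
Your overall strategy coincides with the paper's: reduce to the hypo flow via Lemma \ref{le:hypotococalibrated}, use Lemma \ref{le:diagonalflow} to sort the possible shapes of $f$, integrate the flow explicitly in the block-diagonal case and reparametrise by $x(t)$ to reach the decoupled scalar ODEs \eqref{eq:IVPsimpler}, exclude the degenerate cases (types (i), (ii), and $\lambda_i+\mu_i=0$) by exhibiting a parallel splitting via the Koszul formula, and finally combine Lemma \ref{le:irreduciblehol} with Theorem \ref{th:holonomyreduction} to upgrade to $Hol(g)=\SU(4)$. The only organisational difference is that you fold types (i) and (ii) into the block form with $\lambda_i+\mu_i=0$ before excluding them, whereas the paper kills them directly; that is harmless.

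There is, however, one genuine gap, precisely at the step you flag as ``the main obstacle'': to apply Lemma \ref{le:irreduciblehol} you must actually prove that none of the seven coefficient functions $f_i(t)$, $f_i(t)^{-1}$, $c(t)^2$ is the square of a polynomial of degree at most one in the flow parameter $t$, and your proposal only asserts that this ``follows using $\lambda_i+\mu_i\neq 0$''. This is not a routine translation of the ODE $y''=(y')^2/(2y)$: the functions $f_i(t)$ solve a \emph{coupled} system (each $f_i'$ contains the factor $\sqrt{\prod_j(\lambda_jf_j+\mu_j/f_j)/(\lambda_j+\mu_j)}$), so no single scalar ODE characterises them. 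The paper needs two separate arguments. For the $e^7$-coefficient one computes $-2h''(t)h(t)=\sum_{j=1}^3\bigl(\lambda_jf_j(t)+\mu_j/f_j(t)\bigr)^2$ and evaluates at $t=0$ to get $\sum_j(\lambda_j+\mu_j)^2\neq 0$. For the $e^{2i}$-coefficients, assuming $f_1(t)=(at+1)^2$ forces
\begin{equation*}
f_{\lambda_1,\mu_1}\Bigl(\lambda_1f_{\lambda_1,\mu_1}+\tfrac{\mu_1}{f_{\lambda_1,\mu_1}}\Bigr)^3\Bigl(\lambda_2f_{\lambda_2,\mu_2}+\tfrac{\mu_2}{f_{\lambda_2,\mu_2}}\Bigr)\Bigl(\lambda_3f_{\lambda_3,\mu_3}+\tfrac{\mu_3}{f_{\lambda_3,\mu_3}}\Bigr)
\end{equation*}
to be a nonzero constant on $J$, which is then contradicted by inspecting the explicit solutions of Remark \ref{re:solIVPsimpler}: when all $\lambda_i+\mu_i\neq 0$ the interval $J$ has a finite endpoint at which each factor $\lambda_jf_{\lambda_j,\mu_j}+\mu_j/f_{\lambda_j,\mu_j}$ blows up, and a separate expansion is needed in the sub-case where $f_{\lambda_1,\mu_1}\to 0$ there. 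Without some argument of this kind your application of Lemma \ref{le:irreduciblehol}, and hence the conclusion $Hol(g)=\SU(4)$ in the backward direction, is not established. The remainder of your proposal (the integration of the flow, the isometry with $\tilde g$, and the exclusion of the reducible cases) is correct and essentially identical to the paper's proof.
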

\begin{remark}\label{re:solIVPsimpler}
Note that for $(\lambda,\mu)\in \bR^2$, the solution $f_{\lambda,\mu}$ of the initial value problem \eqref{eq:IVPsimpler} is given by
\begin{equation*}
f_{\lambda,\mu}(y)=\begin{cases} 
                   \sqrt{\tfrac{\mu}{\lambda}}\tan\left(-\sgn(\mu)\sqrt{\lambda \mu}\, y+\arctan\left(\sqrt{\tfrac{\lambda}{\mu}}\right)\right) ,& \textrm{if } \tfrac{\lambda}{\mu}>0,\\
                   \sqrt{-\tfrac{\mu}{\lambda}}\tanh\left(-\sgn(\mu)\sqrt{-\lambda \mu}\, y+\artanh\left(\sqrt{-\tfrac{\lambda}{\mu}}\right)\right) ,& \textrm{if } -1 <\tfrac{\lambda}{\mu}<0,\\
                  \sqrt{-\tfrac{\mu}{\lambda}}\coth\left(-\sgn(\mu)\sqrt{-\lambda \mu}\, y+\arcoth\left(\sqrt{-\tfrac{\lambda}{\mu}}\right)\right) ,& \textrm{if } \tfrac{\lambda}{\mu}<-1,\\ 
                  \tfrac{1}{\lambda y+1},& \textrm{if } \mu=0,\\
                  
                  1-\mu y,& \textrm{if } \lambda=0.
               \end{cases}.
\end{equation*}
\end{remark}
\begin{proof}
Let $(\varphi(t))_{t\in I}$ be the solution of the Hitchin flow with initial value $\varphi_0$ and $g_t:=g_{\varphi(t)}$, so $g=g_t+dt^2$. We need to consider the different cases in Lemma \ref{le:diagonalflow}.

In the first two cases in Lemma \ref{le:diagonalflow}, there exists a splitting $\mfu=V\oplus U$ with $V\neq 0$ such that $f$ preserves both $V$ and $U$ and acts skew-symmetrically on $V$, such that the splitting is $g_t$-orthogonal for all $t\in I$ and such that $g_t|_{V}$ is constant. By the Koszul formula, this implies $\nabla_V=0$, $\nabla_{U\oplus \spa{\partial_t}} V=0$, $\nabla_{U\oplus \spa{e_7,\partial_t}} (U\oplus \spa{e_7, \partial_t})\subseteq U\oplus \spa{e_7, \partial_t}$ and $\nabla_{e_7}|_V=f|_{V}$. Take a non-zero form $\nu\in \Lambda^{\dim(V)} \mfg^*$ annihilating $U\oplus \spa{e_7}$. As $f$ preserves $V$ and acts skew-symmetrically on $V$, we have $f.\nu=-\tr(f|_V)\nu=0$. As $\nabla_w \nu$ for all $w\in \mfu$, $\nabla_{\partial_t} \nu=0$ and $\nabla_{e_7}\nu=f.\nu=0$, the non-zero $\dim(V)$-form $\nu$ is parallel. But so the holonomy principle shows that the holonomy reducible and cannot be equal to $\SU(4)$.

In the last case in Lemma \ref{le:diagonalflow}, assume first that $\lambda_i+\mu_i=0$ for some $i\in\{1,2,3\}$. Note that in the case $i\in \{1,2\}$, the above conditions imply that $a=0$ or $\lambda_j=-\mu_j$ for all $j=1,2$. This ensures that in all cases we have a splitting $\mfu=V\oplus U$ with $V\neq 0$ such that $f$ and $J_0$ both preserve both $V$ and $U$, $f$ acts skew-symmetrically on $V$, $[J_0,f]|_V=0$ and the splitting is $g_t$-orthogonal for all $t\in I$. But $[J_0,f]|_V=0$ shows $(J_0fJ_0+f)|_V=0$ and so that $J_t|_V=J_0|_V$ for all $t\in I$. Hence, $g_t|_V$ is constant. Arguing as above, we see that the holonomy of $(G\times I,g_t+dt^2)$ has to be reducible and so cannot be equal to $\SU(4)$.

Finally, we consider the last case in Lemma \ref{le:diagonalflow} with $\lambda_i+\mu_i\neq 0$ for all $i=1,2,3$. Let $(f_1,f_2,f_3):I\rightarrow U
:=\left\{(x_1,x_2,x_3)\in \bR^3\left|x_i>0, \tfrac{\lambda_i x_i+\tfrac{\mu_i}{x_i}}{\lambda_i+\mu_i}>0 \textrm{ for }i=1,2,3\right.\right\}$ be the maximal solution of the initial value problem
\begin{equation}\label{eq:IVP}
f'_i=-\left(\lambda_i f_i^2+\mu_i\right)\sqrt{\prod_{j=1}^3\tfrac{\lambda_j f_j+\tfrac{\mu_j}{f_j}}{\lambda_j+\mu_j}}, \qquad f_i(0)=1,\qquad i=1,2,3.
\end{equation} 
We first show that
\begin{equation*}
\left(\sqrt{f_1(t)}e_1,\tfrac{1}{\sqrt{f_1(t)}}e_2,\sqrt{f_2(t)}e_3,\tfrac{1}{\sqrt{f_2(t)}}e_4, \sqrt{f_3(t)}e_5,\tfrac{1}{\sqrt{f_3(t)}}e_6,\sqrt{\prod_{i=1}^3\tfrac{\lambda_i f_i(t)+\tfrac{\mu_i}{f_i(t)}}{\lambda_i+\mu_i}}\,e_7 \right)
\end{equation*}
is an adapted basis for $\varphi(t)$ for all $t\in I$. Note that it is not clear from the beginning that the maximal interval $\tilde{I}$ of existence of the initial value problem \eqref{eq:IVP} equals $I$.
However, if we have shown that the family of $\G_2$-structures obtained by the above adapted bases solves the Hitchin flow with initial value $\varphi_0$,
then surely $\tilde{I}\subseteq I$. Now, if $\tilde{I}$ would be smaller than $I$, then either one $f_i=g_t(e_{2i},e_{2i})$ or
$\prod_{j=1}^3\tfrac{\lambda_j f_j+\tfrac{\mu_j}{f_j}}{\lambda_j+\mu_j}=\tfrac{1}{g_t(e_7,e_7)}$ goes to zero or to infinity at the boundary points of $\tilde{I}$
which are inner points of $I$, a contradiction.
 
Let $\tilde{\varphi}(t)$ be the $\G_2$-structure for which the above basis is adapted. Then we have
\begin{equation*}
\begin{split}
\tilde{\varphi}(t)=&\sqrt{\prod_{i=1}^3\tfrac{\lambda_i+\mu_i}{\lambda_i f_i(t)+\tfrac{\mu_i}{f_i(t)}}}\left(e^{127}+e^{347}+e^{567}\right)+\left(f_1(t) f_2(t)f_3(t)\right)^{-\tfrac{1}{2}}e^{135}\\
&-\sqrt{\tfrac{f_2(t) f_3(t)}{f_1(t)}}e^{146}-\sqrt{\tfrac{f_1(t) f_3(t)}{f_2(t)}}e^{236}-\sqrt{\tfrac{f_1(t) f_2(t)}{f_3(t)}}e^{245},\\
\star_{\tilde{\varphi}(t)}\tilde{\varphi}(t)&=e^{1234}+e^{1256}+e^{3456}+\tfrac{\sqrt{\prod_{i=1}^3\tfrac{\lambda_i+\mu_i}{\lambda_i+\tfrac{\mu_i}{f_i^2(t)}}}}{f_1(t)f_2(t)}e^{1367}+\tfrac{\sqrt{\prod_{i=1}^3\tfrac{\lambda_i+\mu_i}{\lambda_i+\tfrac{\mu_i}{f_i^2(t)}}}}{f_1(t)f_3(t)}e^{1457}\\
&+\tfrac{\sqrt{\prod_{i=1}^3\tfrac{\lambda_i+\mu_i}{\lambda_i+\tfrac{\mu_i}{f_i^2(t)}}}}{f_2(t)f_3(t)}e^{2357}-\sqrt{\prod_{i=1}^3\tfrac{\lambda_i+\mu_i}{\lambda_i+\tfrac{\mu_i}{f_i^2(t)}}}e^{2467}.
\end{split}
\end{equation*}
In particular, $\tilde{\varphi}(0)=\varphi_0=\varphi(0)$ and $\tilde{\varphi}(t)$ is cocalibrated for all $t\in I$. Moreover, using that $f_1=f_2$ if $a\neq 0$, we obtain
\begin{equation*}
\begin{split}
d\tilde{\varphi}=&\tfrac{1}{\sqrt{f_1f_2f_3}}\Bigl(\left(\mu_3- \lambda_2 f_2 f_3- \lambda_1 f_1f_3\right)e^{1367}+\left(\mu_2- \lambda_3 f_2f_3- \lambda_1 f_1f_2\right)e^{1457}\\
&+\left(\mu_1- \lambda_2 f_1f_2- \lambda_3 f_1f_3\right)e^{2357}\Bigr)-\sqrt{f_1f_2f_3}\sum_{i=1}^3 \tfrac{\mu_i}{f_i}e^{2467}.
\end{split}
\end{equation*}
Now
\begin{equation*}
\begin{split}
\left. \frac{d}{dt}\sqrt{\prod_{i=1}^3\tfrac{\lambda_i+\mu_i}{\lambda_i+\tfrac{\mu_i}{f_i^2}}}\right.&=\tfrac{1}{2\sqrt{\prod_{i=1}^3\tfrac{\lambda_i+\mu_i}{\lambda_i+\tfrac{\mu_i}{f_i^2}}}}\cdot \sum_{j=1}^3\left(  \tfrac{2(\lambda_j+\mu_j)\mu_j}{\left(\lambda_i+\tfrac{\mu_i}{f_i^2}\right)^2 f_j^3} f_j'\cdot  \prod_{i=1, i\neq j }^3\tfrac{\lambda_i+\mu_i}{\lambda_i+\tfrac{\mu_i}{f_i^2}}\right)\\
&=\sqrt{\prod_{i=1}^3\tfrac{\lambda_i+\mu_i}{\lambda_i+\tfrac{\mu_i}{f_i^2}}}  \sum_{j=1}^3 \tfrac{\mu_j}{f_j(\lambda_j f_j^2+\mu_j)}f_j'=-\sqrt{\prod_{i=1}^3 f_i}\cdot \sum_{j=1}^3 \tfrac{\mu_j}{f_j},
\end{split}
\end{equation*}
which implies $d\tilde{\varphi}(t)=-\star_{\tilde{\varphi}(t)}\tilde{\varphi}(t)'$ and so $\tilde{\varphi}(t)=\varphi(t)$,
i.e. the statement that the basis given above is adapted for $\varphi(t)$.

Next, we give another description of the solutions $(f_1,f_2,f_3)$ of the initial value problem \eqref{eq:IVP}, which allows us
to conclude that $(G\times I,g)$ is isometric to $(G\times \tilde{J},\tilde{g})$ and helps us in the determination of the holonomy of $g$ below.
Thereto, let $(f_{\lambda_1,\mu_1},f_{\lambda_2,\mu_2},f_{\lambda_3,\mu_3})$ be as in the assertion and let $x$ be the maximal solution of the initial value problem
\begin{equation}\label{eq:IVPforx}
x'=\sqrt{\prod_{j=1}^3\tfrac{\lambda_j f_{\lambda_j,\mu_j}(x) +\tfrac{\mu_j}{f_{\lambda_j,\mu_j}(x)}}{\lambda_j+\mu_j}},\quad x(0)=1,
\end{equation}
where we note that the right hand side is well-defined exactly on the interval $J$ of the assertion since $0\neq f_{\lambda_j,\mu_j}'(x)=-(\lambda_j f_j^2(x)+\mu_j)$ for all $x\in J$.

Now, let $I'$ be the maximal interval of existence of the initial
value problem \eqref{eq:IVPforx}. Then $(f_{\lambda_1,\mu_1}\circ x,f_{\lambda_2,\mu_2}\circ x,f_{\lambda_3,\mu_3}\circ x)$
solves the initial value problem \eqref{eq:IVP} on $I'$, so $f_i=f_{\lambda_i,\mu_i}\circ x$ on $I'$ and $I'\subseteq I$.
Note first that $x(I')=J$: Otherwise $I'$ has to be unbounded, say in positive direction, and
$J\ni x_0=\lim_{t\rightarrow \infty} x(t)$. But then there has to be some sequence $(t_k)_k$ with $\lim_{k\rightarrow \infty} t_k=\infty$ and
\begin{equation*}
0=\lim_{k\rightarrow \infty} x'(t_k)
=\sqrt{\prod_{j=1}^3\tfrac{\lambda_j f_{\lambda_j,\mu_j}(x_0) +\tfrac{\mu_j}{f_{\lambda_j,\mu_j}(x_0)}}{\lambda_j+\mu_j}},
\end{equation*}
a contradiction. Moreover, we must have $I'=I$: Otherwise, there is some $t_0\in I\cap \partial I'$ and we must have 
$x_0:=\lim_{t\rightarrow t_0} x(t)\in \partial J$ or $x_0\in \{\infty,-\infty\}$. So, from the explicit solutions of \eqref{eq:IVPsimpler}
given in Remark \ref{re:solIVPsimpler}, we see that for some $j\in \{1,2,3\}$,
$\lambda_j f_j(t_0)+\tfrac{\mu_j}{f_j(t_0)}=\lim_{t\rightarrow t_0}\lambda_j f_j(t)+\tfrac{\mu_j}{f_j(t)}=\lim_{x\rightarrow x_0}\lambda_j f_{\lambda_j,\mu_j}(x)+\tfrac{\mu_j}{f_{\lambda_j,\mu_j}(x)}=0$ or
$f_j(t_0)=\lim_{t\rightarrow t_0} f_j(t)=\lim_{x\rightarrow x_0} f_{\lambda_j,\mu_j}(x)\in \{0,\infty\}$, a contradiction. Thus, $I=I'$ and 
$(h,t)\mapsto (h,x(t))$ is an isometry between $(G\times I,g)$ and $(G\times J,\tilde{g})$

So we are left with showing that $Hol(g)=Hol(\tilde{g})=\SU(4)$. This follows from Lemma \ref{le:irreduciblehol} and Theorem \ref{th:holonomyreduction} if
$g_t(e_k,e_k)$ is not the square of a polynomial of degree at most one for all $k=1,\ldots,7$. By symmetry,
it suffices to show this for $k=1,2,7$. Moreover, replacing $f_1$ by $\tilde{f}_1:=\tfrac{1}{f_1}$,
the new triple $(\tilde{f}_1,f_2,f_3)$ fulfills the initial value problem \eqref{eq:IVP}
for $\tilde{\lambda}_1=-\mu_1$, $\tilde{\mu}_1=-\lambda_1$ and $\tilde{\lambda}_j=\lambda_j$, $\tilde{\mu}_j=\mu_j$ for $j=2,3$.
So we only need to consider $k=2,7$.

Consider first $k=7$, i.e. we assume that $g_t(e_7,e_7)$ is the square of an affine function.
Then $h(t):=\sqrt{g_t(e_7,e_7)}=\sqrt{\prod_{i=1}^3\tfrac{\lambda_i+\mu_i}{\lambda_i f_i(t)+\tfrac{\mu_i}{f_i(t)}}}$ has to be an affine function
and so we must have $h''(t)=0$ for all $t\in I$. A direct computation yields
$-2h''(t)h(t)=\sum_{j=1}^3 \left(\lambda_j f_j(t)+\tfrac{\mu_j}{f_j(t)}\right)^2$, leading to $0=-2h''(0)=\sum_{j=1}^3 (\lambda_j+\mu_j)^2$, a contradiction.

Consider now $k=2$, i.e. assume that $f_1(t)=g_t(e_2,e_2)=(at+1)^2$ for some $a\in \bR$. Then
\begin{equation*}
2a\sqrt{f_1(t)}=2a(at+1)=f_1'(t)=-(\lambda_1 f_1^2(t)+\mu_1)\sqrt{\prod_{j=1}^3\tfrac{\lambda_j f_j+\tfrac{\mu_j}{f_j}}{\lambda_j+\mu_j}},
\end{equation*}
and so $f_1 (\lambda_1 f_1+\tfrac{\mu_1}{f_1})^3(\lambda_2 f_2+\tfrac{\mu_2}{f_2})(\lambda_3 f_3+\tfrac{\mu_3}{f_3})
=f_{\lambda_1,\mu_1} (\lambda_1 f_{\lambda_1,\mu_1}+\tfrac{\mu_1}{f_{\lambda_1,\mu_1}})^3(\lambda_2 f_{\lambda_2,\mu_2}+\tfrac{\mu_2}{f_{\lambda_2,\mu_2}})(\lambda_3 f_{\lambda_3,\mu_3}+\tfrac{\mu_3}{f_{\lambda_3,\mu_3}})\circ x$
is constant and not zero. As $x$ is injective and $f_{\lambda_i,\mu_i}$ is real-analytic, we get that 
\begin{equation*}
k:=f_{\lambda_1,\mu_1} (\lambda_1 f_{\lambda_1,\mu_1}+\tfrac{\mu_1}{f_{\lambda_1,\mu_1}})^3(\lambda_2 f_{\lambda_2,\mu_2}+\tfrac{\mu_2}{f_{\lambda_2,\mu_2}})(\lambda_3 f_{\lambda_3,\mu_3}+\tfrac{\mu_3}{f_{\lambda_3,\mu_3}})
\end{equation*}
is constant and not zero on $J$.

From the explicit solution of $\eqref{eq:IVPsimpler}$ given in Remark \ref{re:solIVPsimpler}, we see that for any $(\lambda,\mu)\in \bR^2$ with $\lambda+\mu\neq 0$ the maximal interval around $1$ on which $f_{\lambda,\mu}$ is defined and greater than zero has a finite boundary point and that at each finite boundary
point, $f_{\lambda,\mu}$ goes to zero or infinity and $\lambda f_{\lambda,\mu}+\tfrac{\mu}{f_{\lambda,\mu}}$ goes to infinity.
So $J$ has a finite boundary point $x_0$. Moreover, if $f_{\lambda_1,\mu_1}$ does not go to zero at $x_0$, it is clear that the function $k$ goes to infinity at $x_0$
and we have a contradiction. However, the same happens at $x_0$ also if $f_{\lambda_1,\mu_1}$ goes to zero as
\begin{equation*}
f_{\lambda_1,\mu_1} (\lambda_1 f_{\lambda_1,\mu_1}+\tfrac{\mu_1}{f_{\lambda_1,\mu_1}})^3
=\lambda_1^3f^4_{\lambda_1,\mu_1}+3\lambda_1^2\mu_1f^2_{\lambda_1,\mu_1}+3\lambda_1\mu_1^2+\mu_1^3 f^{-2}_{\lambda_1,\mu_1}
\end{equation*}
and as $f_{\lambda_1,0}$ does not go to zero at a finite point.
\end{proof}

\begin{example}
Take $a=\lambda_1=\lambda_2=\lambda_3=0$ and $\mu_1\geq\mu_2\geq\mu_3>0$ in Theorem \ref{th:diagonalflow}. Then $\mfg$ is two-step nilpotent and the induced Riemannian metric $g$ reads
\begin{equation*}
g=\sum_{i=1}^3 \left(\tfrac{1}{1-\mu_i x}\,e^{2i-1}\otimes e^{2i-1}+ (1-\mu_i x)\, e^{2i}\otimes e^{2i}\right)+\prod_{j=1}^3(1-\mu_j x)\left(e^7\otimes e^7+ dx^2\right).
\end{equation*}
and is defined on $G\times \left(-\infty,\tfrac{1}{\mu_3}\right)$.
Now $\mfg$ obviously admits a basis with rational structure constants and so there exists a cocompact lattice $\Gamma$ in $G$.
Hence, we can build the compact nilmanifold $\Gamma\backslash G$ and get an induced Riemannian metric with holonomy equal to $\SU(4)$ on
$\Gamma\backslash G \times \left(-\infty,\tfrac{1}{\mu_3}\right)$.
\end{example}

\end{document}